\title{Extented Paley-Wiener type theorems for the Mellin and Fourier transforms on the half-real line}
\author{C\'esar Augusto del Corral Martinez\\
        ce-del@uniandes.edu.co\\
        cesar.math@gmail.com}
\date{ Universidad de los Andes\\
[0.5cm] 2016}
\newtheorem{thm}{Theorem}
\newtheorem{lemma}{Lemma}
\newtheorem{defin}{Definition}
\newtheorem{prop}{Proposition}
\newtheorem{cor}{Corollary}
\newtheorem{remark}{Remark}
\newtheorem{example}{Example}
\newcommand{\C}{\mathbb{C}}
\newcommand{\R}{\mathbb{R}}
\newcommand{\N}{\mathbb{N}}
\newcommand{\uP}{\underline{\mathbb{P}}}
\newcommand{\oP}{\overline{\mathbb{P}}}
\newcommand{\PP}{\textsf{p}}
\newcommand{\Z}{\mathbb{Z}}
\newcommand{\dlim}{\small{\mathcal{D}^\prime} \!\text{-}\!\lim}
\newcommand{\la}{\langle} 
\newcommand{\ra}{\rangle} 
\newcommand{\normd}{{d\hspace{-0,05cm}\bar{}\hspace{0,05cm}}}
 \DeclareMathOperator{\Res}{Res}
\begin{document}
\maketitle

\section*{Abstract}

In this paper we present generalisations of  Paley-Wiener type theorems  to  Mellin and (Laplace-)Fourier transforms  of  rapidly decreasing smooth functions with positive support and log-polyhomogeneous asymptotic expansion at zero. This article is based on  the thesis \cite{CdC} uses  results  borrowed from  \cite{RS}, \cite{RS81}, \cite{P79}, \cite{FGD}, \cite{P79}, \cite{SZ}.
\vspace{1cm}

\section*{Introduction} 

A classical theorem  due to R. Paley and N. Wiener \cite{PW} provides a necessary and sufficient condition to  build a holomorphic  extension of the Fourier transform $\widehat{f}$ of a function $f\in L^2(\R)$ with exponential growth $|\widehat{f}(\zeta)|\leq Ce^{a|\zeta|}$  and supported on   $ [-a,a]$.   This theorem was generalized to  distributions  by L. Schwartz in  \cite{Schwartz68}, and since then this type of results are known as Paley-Wiener theorems.\\ 
\\
The main  goal of this paper is to provide Paley-Wiener type theorems for the Mellin and Fourier transform of rapidly decreasing functions with positive support and log-polyhomogeneous asymptotic behavior at zero. To  the author's knowledge, in that degree of generalization, the results concerning  the Fourier transform included in Section \ref{SS:P-W.Fourier.Log-poly} are new. As a particular case we recover a Paley-Wiener type theorem for rapidly decreasing functions which are smooth up to the boundary. This result plays an important role in  Boutet de Monvel's pseudo-differential calculus \cite{BdM}, and  provides a fundamental tool for the traciality of the non-commutative residue on BdM's algebra done by  B.V. Fedosov, F. Golse, E. Leichtnam and E. Schrohe  in \cite{FGLS}. 
The tools presented here are used in an essential way in  \cite{CdC}  to prove the traciality of the so called {\em canonical trace}  for log-polyhomogeneous pseudo-differential operators (see \cite{K-V} and \cite{Lesch99}) on manifolds with boundary. We hope that this short survey can also be of use for whom might want to get acquainted  with Paley-Wiener type theorems, independently of their applications in geometric analysis.
\\ \\
In order to make this document self-contained  we  also  recall well-known Paley-Wiener type theorems.  As we go along we compare   our results with  existing results in the literature, in particular, we compare a Paley-Wiener result involving the Mellin tranform of rapidly decreasing functions with positive support and log-polyhomogeneous asymptotic behavior at zero   (cf. Theorem \ref{Type Paley-Wiener.thm's}, see also \cite{RS}, \cite{RS81})   with the respective Fourier transform (cf. Theorem \ref{P-W.Mellin.Asymp.Log-poly}).
\\  \\
This document is organized as follows: In Section \ref{P-W.mellin.smooth}, we recall  basic properties  of and Paley-Wiener type results for the Mellin transform   of smooth compactly supported functions $C^\infty_c(\R_+)$  as well as of compactly supported functions in $L^2(\R_+)$ (see, \cite{RS81}, \cite{P79}, \cite{FGD}). Finally, we give similar results for $\mathcal{S}(\overline{\R}_+)$ (cf. Proposition \ref{Smooth.P-W.Mellin}). In Section \ref{SS:P-W.Mellin.log-poly}.  Using  results  of  \cite{FGD}, we deduce a Paley-Wiener type theorem for functions in $\mathcal{S}_{\PP}(\R_+)$ (cf. Theorem \ref{P-W.Mellin.Asymp.Log-poly}). In Section \ref{Subsec-extend-distr}, following \cite{P79} we recall a Paley-Wiener type theorem of the Mellin trasnform of extendable compactly supported distribution $\mathcal{E}'_+(\R_+)$ (cf. Proposition \ref{Mellin.Dist.}).\\
In Section \ref{SS:HomoD}, we derive an explicit expression for the Fourier transform of log-homogeneous distributions, restricting  ourselves to distributions  of order $a\in\C\setminus\{\dots,-2, -1\}$ and log-type $k\in\Z_{\geq 0}$. In Section \ref{SS:P-W.Fourier.Log-poly}, we  combine  the results obtained in Section \ref{SS:HomoD}  to derive  a Paley-Wiener type theorem for the Fourier transform of functions in $\mathcal{S}_{\PP}(\R_+)$. More precisely, we relate the Fourier transform of a function in $\mathcal{S}_{\PP}(\R_+)$ with a holomorphic function in $\C_-:=\{\zeta\in\C \ |\ \textup{Im}\, \zeta<0\}$ with log-polyhomogeneous asymptotic behavior at infinity, which is  continuous up to the boundary  in $\mathcal{S}'(\R_+)$ (cf. Theorem \ref{Type Paley-Wiener.thm's}). In particular, this result can be applied to functions in $\mathcal{S}(\overline{\R}_+)$ (cf. Proposition \ref{Paley-WienerProposition}). Finally, in Section \ref{SS:WP-thm.tempred.distributions} we recall a Paley-Wiener type result of the Laplace-Fourier transform of tempered distribution with compact support, see \cite{SZ}, similar to the one described in Section \ref{Subsec-extend-distr} for the Mellin transform.

\section{Notations and preliminary definitions}

We introduce the necessary notations for the subsequent sections and the definitions to follow.\\  \\
\textbf{Notations:} 
\begin{itemize}
\item $\R_+:=\{x\,|\,\,  x>0\}$  resp. $\R_-:=\{x\, |\ x<0\})$ and $\overline{\R}_+:=\{x\, |\, x\geq 0\}$  resp. $\overline{\R}_-:=\{x\, |\, x\leq 0\})$, denote the real half-spaces without and with boundary, respectively. Similarly, 
$$\C_+:=\{\zeta\in\C \ |\ \textup{Im}\, \zeta>0\}\ \ (\text{resp. } \C_-:=\{\zeta\in\C \ |\ \textup{Im}\, \zeta<0\})\text{  and }$$
$$\overline{\C}_+:=\{\zeta\in\C \ |\ \textup{Im}\, \zeta \geq 0\}\ \  (\text{resp. }\overline{\C}_+:=\{\zeta\in\C \ |\ \textup{Im}\, \zeta\leq 0\})
$$
denotes  the complex half-plane\index{complex half-plane} without and with boundary, respectively.\\
\item By a {\em cut-off function}  $\omega$ around zero we mean a smooth compactly supported function which is nonnegative, decreasing, and equal to  $1$ near  zero.  By an {\em excision function}\index{excision function} $\chi$ we mean a smooth function which is nonnegative and vanishes near zero and is equal to $1$ outside of a neighborhood of zero. Note that $1-\chi$ is a cut-off function.
\item 
We denote by $\xi\mapsto [\xi]$ the strictly positive function for which $[\xi]=|\xi|$ for $|\xi|\geq 1$,  and by $O([\xi]^{-\infty})$\index{$O([\xi]^{-\infty})$} we mean a {\em rapidly decreasing function}\index{rapidly decreases function}, i.e. a   function that decreases  faster than any polynomial.
\item Let $U$ be an open subset of $\R$. Let $C^\infty_c(U)$ be the set of all compact support smooth functions on $\R$. Let $\mathcal{S}(\R)$ be the set of all smooth rapidly decreasing function on $\R$, i.e., $u$ lies in $\mathcal{S}(\R)$ iff $u$ satisfies that for any $\alpha,\beta\in\{0,1,2,\dots\}$, there exists a positive constant $C_{\alpha,\beta}$ such that $$\sup_{x\in\R} |x^\alpha \partial^\beta_x u(x)|\leq C_{\alpha,\beta}.$$
\item Let $\mathcal{S}(\overline{\R}_+)$ be the set of Schwartz functions
on $\R_+$ which are {\em smooth up to the boundary}, i.e. $u$ lies in
$\mathcal{S}(\overline{\R}_+)$ if there exists a function $\tilde{u}\in \mathcal{S}(\R)$ such that its restriction $\tilde{u}(x)|_{x>0}$ coincides with $u(x)$. Let  $\mathcal{S}'(\R)$ be the space of {\em tempered distributions}.
\item 
Let $U\subset \R$ be an open subset. A distribution $f$  in $U$ is a linear form on $C^\infty(U)$ such that for every compact set $K\subset U$ there exist constants $C$ and $k$ such
\begin{equation}\label{Eq:2.1.2}
|f(\phi)|\leq C\sum_{|\alpha|\leq k}\sup |\partial^\alpha \phi|,\,\,\, \forall \phi\in C^\infty_c(K).
\end{equation} The set of all distributions on $U$ is denoted by $\mathcal{D}'(U)$. If the same integer $k$ can be used in  (\ref{Eq:2.1.2}) for every compact $K$ we say that $f$ is of order $\leq k$ and we denote the set of such distributions by $\mathcal{D}^{',k}(U)$ . 
\item We say $f\in\mathcal{D}'(U)$ vanishes on $V\subset U$ if $\la f,\phi \ra=0$ for all $\phi\in C^\infty_c(U)$ with support in  $V$.
 The maximal open set $V$ on which the distribution $f$ vanishes is called the {\em support} of $f$.
\item
The set of compactly supported distributions is denoted by $\mathcal{E}'(\R)$, and $\mathcal{E}'_+(\R_+)$ denotes the set of positive compact supported distribution in $\mathcal{E}'(\R)$ which can be extended to a compact support distribution in $\R$. 
\end{itemize}

We are now ready to introduce some basic definitions (see e.g. \cite{Rudin}, \cite{PW}, \cite{Schwartz52}, \cite{SB}). 

\begin{defin}
Let $U\subset \R$ be an open subset. Denote by $C^\infty_c(U)$ the set of all smooth functions with compact support in $U$.
\begin{itemize}
\item \textit{The Mellin Transform:}
The \textit{Mellin Transform}\index{Mellin transform} is defined as a map $\mathcal{M}:C_c^\infty(\R_+)\rightarrow \mathcal{A}(\C)$ given by
\begin{equation}\label{Mellin.integral}
\mathcal{M}[f](s):=\int_0^\infty t^sf(t)\frac{dt}{t}, \;\;\;  s\in\C,
\end{equation}
where $\mathcal{A}(\C)$ denotes the set of all holomorphic functions on $\C$. The Mellin map\index{Mellin map} $f\mapsto \mathcal{M}[f]$ is continuous, and has a continuous inverse $\mathcal{M}^{-1}$ given by
\begin{equation}\label{Mellin.Fourier.eq}
 f(t)=\int_{-\infty}^{\infty}t^{-(\xi+i\eta)} \mathcal{M}[f](\xi+i\eta)d\eta \;\;\;\; \text{ for arbitrary }\xi.
\end{equation}
which  defines a continuous map. 
\item \textit{The Fourier transform:} Let   $\mathcal{S}(\R)$ denote the set of all smooth rapidly decreasing functions, also called the set of Schwartz functions.\\
The \textit{Fourier transform} $\mathcal{F}:\mathcal{S}(\R)\to \mathcal{S}(\R)$ is a continuous bijection with continuous inverse given by
\begin{equation}\label{Eq:def.Fourier}  
\mathcal{F}[u](\xi):=\int_{\R} e^{-ix\xi}u(x)\,\normd x\,\,\,  \text{and}\,\,  \mathcal{F}^{-1}[u](x):=\int_{\R} e^{ix\xi}u(\xi)\,\normd \xi,
\end{equation}
where $\normd x:=\frac{1}{(2\pi)^{1/2} }dx$, $\mathcal{F}[u](\xi)$ is also denote by $\widehat{u}$.
\end{itemize}
\end{defin}

The following proposition relate the Mellin  and the Fourier transforms, a proof follows from straight way computation.

\begin{prop}  
\textit{The Mellin transform in terms of a Fourier transform:}
 For $\xi\in\R$, let $m_\xi:C_0^\infty(\R_+)\rightarrow C^\infty(\R)$ be the isomorphism of vector spaces given by $f\mapsto e^{-\xi t}f(e^{-t})$.  Then, for $s=\xi+i\eta$  we have
\begin{equation}\label{Fourier.Mellin.Rel}
 \mathcal{M}[f](s)=\mathcal{F}\circ m_\xi [f](\eta)= \mathcal{F}[e^{-\xi t}f(e^{-t})](\eta), \,\,\, \forall f\in C^\infty_c(\R_+).
\end{equation}
 If $\chi:\R\rightarrow \R_+$ denotes the diffeomorphism of groups $\chi(x)=e^{-x}$ then,
 its pullback  $\chi^\ast:C^\infty_c(\R_+)\rightarrow C^\infty_c(\R)$, given by $(\chi^\ast f)(x)=f(e^{-x})$,  coincides with $m_0$.
 In particular, for $s=i\eta$, we have
 $$\mathcal{M}[f](i\eta)=\int_0^\infty t^{i\eta}f(t)\frac{dt}{t}=\int_{-\infty}^{\infty}e^{-ix\eta}f(e^{-x})dx=\mathcal{F}\circ
 \chi^\ast  [f](\eta).$$
 \end{prop}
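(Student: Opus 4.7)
The plan is to perform the change of variables $t = e^{-x}$ in the defining Mellin integral~(\ref{Mellin.integral}) and recognize the resulting one-sided integral over $\R$ as a Fourier transform. With $t = e^{-x}$ one has $dt/t = -dx$, while $t \in (0,\infty)$ corresponds to $x \in (-\infty,+\infty)$ with reversed orientation, so the sign is absorbed by swapping the limits. Writing $s = \xi + i\eta$, the factor $t^s$ becomes $e^{-sx} = e^{-\xi x}\,e^{-i\eta x}$, and the Mellin integral transforms into
\[
\int_{-\infty}^{+\infty} e^{-i\eta x}\,\bigl(e^{-\xi x} f(e^{-x})\bigr)\,dx,
\]
which up to the normalisation $\normd x = (2\pi)^{-1/2}dx$ hidden in the definition~(\ref{Eq:def.Fourier}) is precisely $\mathcal{F}[m_\xi f](\eta)$. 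Thus the first claim~(\ref{Fourier.Mellin.Rel}) follows by direct substitution.

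Next I would check that $m_\xi : C_c^\infty(\R_+)\to C^\infty(\R)$ is well-defined and an isomorphism: the map $\chi : x \mapsto e^{-x}$ is a smooth diffeomorphism of $\R$ onto $\R_+$, so its pullback $\chi^\ast$ is a continuous linear bijection on smooth functions, and multiplication by the strictly positive smooth factor $e^{-\xi x}$ is clearly invertible. Since $f$ has compact support in $\R_+$, the function $m_\xi f$ has compact support in $\R$, so all integrals converge absolutely and the change of variables is fully justified without any growth discussion.

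Finally, specialising to $s = i\eta$ (i.e.~$\xi = 0$) gives $m_0 f(x) = f(e^{-x}) = (\chi^\ast f)(x)$, so the general identity collapses to $\mathcal{M}[f](i\eta) = \mathcal{F}\circ\chi^\ast[f](\eta)$, which is the second assertion. There is no genuine obstacle in this proof — the only subtlety worth flagging is the bookkeeping of the factor $(2\pi)^{-1/2}$ appearing in the Fourier normalisation $\normd x$: for the stated equality to hold literally one must read the Mellin integral with the same normalisation, or else insert the compensating constant explicitly.
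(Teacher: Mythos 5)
Your proposal is correct and coincides with the computation the paper itself intends (the paper merely asserts that the result ``follows from straight way computation'', and the displayed special case $s=i\eta$ in the statement is exactly your substitution $t=e^{-x}$). Your remark about the $(2\pi)^{-1/2}$ normalisation hidden in $\normd x$ is a legitimate bookkeeping point that the paper glosses over, and flagging it is appropriate.
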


As we shall see later, both the Mellin  and Fourier  transforms extend to more general spaces which we introduce below.
\\ \\
The space of Fourier  transforms  of functions in $\mathcal{S}(\overline{\R}_+)$ plays an important role in Boutet de Monvel's pseudo-differential boundary calculus \cite{BdM}. The smoothness up to the boundary yields the Taylor expansion around $x=0$, $u(x)\sim \sum_{j\ge 0}\frac{1}{j!}u^{(j)}(0)x^{j}$. We  consider more general smooth rapidly decreasing functions allowing for  log-polyhomogeneous asymptotic behavior at zero.  To the set  $\PP=\{ (p_j,m_j)\in\C\times\N\ |\text{ for } j\in\N\}$ which prescribes the type of {polyhomogeneous singularity}  at zero, we assign the space   $\mathcal{S}_{\PP}(\R_+)$ of smooth  functions $u$  on $\R_+$ which are rapidly decreasing at infinity and such that, as $x\to 0^+$,  
$$u\sim \sum\limits_{j=0}^{\infty}\sum\limits_{k=0}^{m_j}c_{jk}x^{p_j}\ln^k x.$$
These spaces were considered by many authors, e.g. H. Kegel, B-W. Schulze, S. Rempel  \cite{RS}, \cite{RS81}, \cite{RS82};   G. Grubb \cite{G05-2}, J. Seiler,   E. Schrohe and many others.  
\\ \\
Let us also mention that Paley-Wiener type theorems for compact support  functions in the spaces $L^2(\R)$, $\mathcal{S}(\R)$ and $\mathcal{S}'(\R)$  are considered in  \cite{SB}, Theorems 11.1.1.-11.1.4.  Here  we focus in Paley-Wiener theorems for functions with positive support.
\vspace{1cm}

\textbf{Acknowledgements.} It is a pleasure to thank Sylvie Paycha, Carolina Neira  and  Elmar Schrohe for their   encouragements, suggestions and contributions to improve this paper.  I also thank Alexander Cardona for helpful comments on an earlier draft of this paper. Part of the research on which  this paper  is based  was carried out during visits to Potsdam University which the author thanks  for its hospitality. Let me also thank Sylvie Paycha and Bert-Wolfgang Schulze for the scientific advise they gave me during my stays. This research has been supported by the
\emph{Vicerrector\'ia de Investigaciones} and the \emph{Faculty of
Sciences} of the Universidad de los Andes.

\section{Functions with log-polyhomogeneous asymptotic behavior at zero}\label{S:Conormal}

In this section we study smooth rapidly decreasing functions with positive support and log-polyhomogeneous asymptotic behavior at zero.\\
We use a similar notation  to the one used in \cite{RS}. We denote by $\uP$ the set of all a sequences $\PP=\{ (p_j,m_j)\in\C\times\N\ |\   0\leq m_j\leq m \text{ for } j\in\N\}$ with
 $$\textup{Re}\, p_j\longrightarrow\infty\text{ as } j\longrightarrow \infty,\ \textup{Re}\, p_{j}\leq \textup{Re}\, p_{j+1}\ j\in\N.$$
 In particular, denote by $\PP_0$ the sequence $$\PP_0=\{ (p_j,m_j)\in\C\times\N \ |\ p_j=j,\ m_j=0,\text{for } j\in\N\},$$
corresponding to the power set associated to the usual Taylor series type expansion around $x=0$ for functions in $\mathcal{S}(\overline{\R}_+)$.

\begin{defin}\label{Def.Conormal.Sing.0}
 Let $\PP=\{(p_j,m_j)\}\in \uP$. Consider the subset $\mathcal{S}_\PP(\R_+)\subset \mathcal{S}(\R_+)$  of functions
 $u$  with the following {\em log-polyhomogeneous asymptotic expansion around zero}\index{log-polyhomogeneous asymptotic expansion around zero}
\begin{equation}\label{Conormal.Sing}
 u(x)\sim\sum\limits_{j=0}^{\infty}\sum\limits_{k=0}^{m_j}a_{jk}x^{p_j}\ln^k x,
\end{equation}
 as $ x\rightarrow 0^+.$  In other words, a  function $u$   lies in  $\mathcal{S}_P(\R_+)$ if and only if for any cut-off function $\omega$
 there exists a sequence $\{ a_{jk}=a_{jk}(u)\in\C \ |\  0\leq k\leq m_j,\text{ for } j\in\N \}$ and $N\in\N$  such that for $$u_0(x):=\omega(x)\Big( u(x)-\sum\limits_{j=0}^{N}\sum\limits_{k=0}^{m_j}a_{jk}x^{p_j}\ln^k x\Big),$$
 we have  $\partial^k_x u_0(x)=O(x^{\textup{Re}\, p_{N+1}-k})\ \forall k\in\N$, 
 and such that  $e_+(1-\omega)u\in \mathcal{S}(\R).$ In this case, we say that $u$ has a
 {\em log-polyhomogeneous asymptotic behavior at zero of type $\PP$}. This kind of singularity is also called {\em conormal singularity at zero of type $\PP$}.
\end{defin}
Similarly, denote by $\oP$ the set of all sequences $\PP=\{(p_j,m_l)\in\C\times\N\ |\ j\in\N\}$ for which $$\textup{Re}\, p_j \to -\infty\text{ as } j\longrightarrow\infty,\ \textup{Re}\, p_j \geq \textup{Re}\, p_{j+1} \ j\in\N.$$
 
\begin{defin}\label{def.sim.infty}
 Let $\PP=\{(p_j,m_j)\}\in \oP$. The subset $C^\infty_{\PP}(\R)\subset C^\infty(\R)$ denotes the set of functions
 $u$  with the following  an asymptotic expansion\index{asymptotic expansion}
\begin{equation}\label{Conormal.Sing.Infinity}
 u(x)\sim\sum\limits_{j=0}^{\infty}\sum\limits_{k=0}^{m_j}a_{jk} x^{p_j}\ln^{k} x,
\end{equation}
 as $ |x|\rightarrow \infty.$   In other words, a function $u\in C^\infty_{\PP}(\R)$ if and only if $u$ is a  smooth function on $\R$ and,
 for any cut-off function $\omega$, there exists a sequence $\{ a_{jk}=a_{jk}(u)\in\C \ |\  0\leq k\leq m_j ,\text{ for } j\in\N \}$
 and  $N\in\N$ such that
 $$u_\infty(x):=(1-\omega(x))\Big( u(x)-\sum\limits_{j=0}^{N}\sum\limits_{k=0}^{m_j }a_{jk}x^{p_j }\ln^k x\Big),$$
 then $\partial^k_x u_\infty(x)=O(x^{\textup{Re}\, p_{N+1} -k})\, \forall k\in\N.$  In this case, we say that $u$ has a
 {\em log-polyhomogeneous asymptotic behavior at infinity of type $\PP $}\index{log-polyhomogeneous asymptotic behavior at infinity}. This kind of singularity is also called
 {\em conormal singularity at infinity of type $\PP $}.
\end{defin}

For $\alpha\in\C$ and $\PP \in \uP$ (or $\PP \in \oP$), we set $T^\alpha \PP=\{(p_j+\alpha,m_j)\ |\  j\in\N\}$, which we will call the translation of $\PP$ by $\alpha$.

\begin{remark}
In Section \ref{SS:HomoD} we will consider homogeneity properties of $x^a_\pm$, and we will be advocated to restrict ourselves to {\em non-negative integer} powers of the type $x^a_\pm$ (with $a\in\C\setminus \Z_{<0}$,  cf. (\ref{Homogenity.lost})). This lead us to the following definition.
\end{remark}

\begin{defin}
 We call a power set\index{power set} $\PP=\{(p_j,m_j)\in\C\times\N \ |\ j\in\N\} \in \uP$  an {\em appropriate} power\index{appropriate power  set} set if $\PP$ contains $\PP_0=\{(j,0)\ |\ j\in\N \}$ as a subset
 and, if for all $j\in\N,$ $p_j$ is {\em not} a negative integer.
\end{defin}

\begin{example}
 Let $u(x)=x^{-1/2}e^{-x^2}$ defined for $x\in\R_+$, then $u(x) \sim \sum_{j \ge 0} {{x^{2j - {1 \over 2}} }\over {j!}}$, so that
 $u(x)$ lies in $\mathcal{S}_{T^{-\frac{1}{2}} \PP_0}(\R_+)$  with
 $T^{-\frac{1}{2}}\PP_0=\{(j-1/2,0)\ |\ j\in\N\}$.
\end{example}
Two inclusion $\PP$ and $\PP'$ between two appropriate power sets $\PP$ and $\PP'$ induces the following inclusion
$$ \mathcal{S}_{\PP'}(\R_+)\subset \mathcal{S}_{\PP}(\R_+)\;  \;  \text{ and }\; \;
 C^\infty_{\PP}(\R)\subset C^\infty_{\PP'}(\R). $$

\begin{lemma}\label{Lemma.subsets}
 For any $\PP$ an appropriate power sets we have $$\mathcal{S}(\overline{\R}_+)\subset \mathcal{S}_{\PP}(\R_+)\subset \mathcal{S}(\R_+).$$
 Moreover, the conormal singularity given by $\PP_0=\{(j,0)\ |\ j\in\N\}$ corresponds to
 the Taylor expansion of a  smooth function  up to the boundary, i.e.
\begin{equation*}
 \mathcal{S}_{\PP_0}(\R_+)=\mathcal{S}(\overline{\R}_+).
\end{equation*}
\end{lemma}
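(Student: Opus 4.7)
My plan is to prove the three assertions separately, using the hypothesis that $\PP$ is appropriate, i.e.\ $\PP \supset \PP_0 = \{(j, 0) \mid j \in \N\}$ and none of the powers $p_j$ is a negative integer. The inclusion $\mathcal{S}_\PP(\R_+) \subset \mathcal{S}(\R_+)$ is built into Definition \ref{Def.Conormal.Sing.0}, since $\mathcal{S}_\PP(\R_+)$ is introduced there as a subset of $\mathcal{S}(\R_+)$, with rapid decay at infinity encoded by the condition $e_+(1-\omega) u \in \mathcal{S}(\R)$, so no further argument is required.

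For $\mathcal{S}(\overline{\R}_+) \subset \mathcal{S}_\PP(\R_+)$, I would take $u \in \mathcal{S}(\overline{\R}_+)$ with Schwartz extension $\tilde u \in \mathcal{S}(\R)$ and rely on Taylor's theorem: for every $M \in \N$,
\[
\tilde u(x) = \sum_{n=0}^{M} \frac{\tilde u^{(n)}(0)}{n!}\, x^n + R_M(x), \qquad \partial_x^k R_M(x) = O(x^{M+1-k}) \ \text{ as } x \to 0^+.
\]
Because $\PP$ is appropriate, each integer monomial $x^n$ corresponds to a unique entry $(p_j, m_j) = (n, 0)$ of $\PP$, so I can set $a_{j,0} = \tilde u^{(n)}(0)/n!$ for such indices and $a_{j,k} = 0$ in every other case. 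Since $\PP$ is ordered by $\textup{Re}\, p_j$ and $\PP \supset \PP_0$, any partial sum $\sum_{j=0}^N \sum_k a_{jk} x^{p_j} \ln^k x$ coincides with some Taylor polynomial $T_M$ of $\tilde u$ with $M + 1 \geq \textup{Re}\, p_{N+1}$, and the remainder estimate then yields $\partial_x^k u_0(x) = O(x^{\textup{Re}\, p_{N+1} - k})$; the condition $e_+(1-\omega) u \in \mathcal{S}(\R)$ is automatic from $\tilde u \in \mathcal{S}(\R)$.

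The equality $\mathcal{S}_{\PP_0}(\R_+) = \mathcal{S}(\overline{\R}_+)$ then reduces to the reverse inclusion. Given $u \in \mathcal{S}_{\PP_0}(\R_+)$ with $u(x) \sim \sum_{j \geq 0} a_{j,0}\, x^j$, I would invoke Borel's theorem to produce $v \in C^\infty_c(\R)$ satisfying $v^{(n)}(0) = n!\, a_{n,0}$ for all $n \in \N$. The difference $h := u - v|_{\R_+}$ is smooth on $\R_+$ and, by subtracting the two identical asymptotic expansions, all its one-sided derivatives at $0^+$ vanish, so $h$ extends by zero across the origin to a function $\tilde h \in C^\infty(\R)$. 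Then $\tilde u := v + \tilde h$ smoothly extends $u$ to $\R$; rapid decay of $\tilde u$ (together with all derivatives) follows since $v$ has compact support and $u$ is rapidly decreasing, so $\tilde u \in \mathcal{S}(\R)$.

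The step I expect to be most delicate is the bookkeeping for the asymptotic expansion in the middle inclusion: one must verify that the partial sums indexed by the ordered set $\PP$ line up with Taylor polynomials of $\tilde u$, which relies crucially on $\PP$ being appropriate (no negative integer powers, and $\PP_0 \subset \PP$ so that gaps between consecutive $\textup{Re}\, p_j$ are controlled by integers). The Borel/flat-extension argument in the last step is classical, but one should check that the extension lies in $\mathcal{S}(\R)$ rather than merely in $C^\infty(\R)$.
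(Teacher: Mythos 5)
Your proof is correct, and its first two parts (reading $\mathcal{S}_{\PP}(\R_+)\subset\mathcal{S}(\R_+)$ off the definition, and proving $\mathcal{S}(\overline{\R}_+)\subset\mathcal{S}_{\PP}(\R_+)$ via Taylor's theorem with remainder) follow the same route as the paper, only with more explicit bookkeeping of how the integer Taylor exponents sit inside the ordered set $\PP$ --- a detail the paper compresses into a single sentence, and which your verification that every integer $n<\textup{Re}\,p_{N+1}$ already occurs among the first $N+1$ entries makes rigorous. Where you genuinely diverge is the reverse inclusion $\mathcal{S}_{\PP_0}(\R_+)\subset\mathcal{S}(\overline{\R}_+)$: the paper observes that such a $u$ has all one-sided derivatives with continuous limits at $x=0$ and then cites Seeley's extension theorem to produce $\tilde u\in\mathcal{S}(\R)$, whereas you construct the extension by hand via Borel's theorem (realize the formal series by a compactly supported $v$, note that $u-v|_{\R_+}$ is flat at $0^+$, extend it by zero, and add back $v$). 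Your route is self-contained and elementary, and it makes membership of the extension in $\mathcal{S}(\R)$ (rather than merely $C^\infty(\R)$) transparent, since $v$ has compact support and $u$ is rapidly decreasing; the paper's route is shorter and, by invoking Seeley, in fact provides a continuous linear extension operator, which is more than the mere existence needed here. The one step you should make explicit is the classical fact you implicitly use: a smooth function on $(0,\infty)$ all of whose derivatives tend to $0$ as $x\to 0^+$ extends by zero to a $C^\infty$ function on all of $\R$.
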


\begin{proof}
 The first assertion follows from the definition of $\mathcal{S}_{\PP}(\R_+)$ and the fact that $\PP$ is an {\em appropriate} power sets. Let us prove the second assertion. First,
 $\mathcal{S}_{\PP_0}(\R_+)\supset\mathcal{S}(\overline{\R}_+)$  follows  from Taylor's asymptotic expansion at $x=0^+$. To see
 $\mathcal{S}_{\PP_0}(\R_+)\subset\mathcal{S}(\overline{\R}_+)$,
 let $u\in S_{\PP_0}(\R_+)$. This assumption implies that $u$ is a rapidly decreasing
 function and smooth for $x \gg 0$. The existence of $\tilde{u}\in S(\R)$ such that
 $\tilde{u}|_{\R_+}=u$ then directly follows from the main result in
 \cite{Seeley1973} where a smooth function defined in a half space, all
 of whose derivatives have continuous limits at the boundary, is extended
 to a $C^\infty$-function in the whole space.
\end{proof}

Finally, let us remark that in \cite{RS}, Section 1.2, the authors show that $\mathcal{S}_\PP(\R_+)$ may be equipped with a Fr\'echet topology derived from the $L^2$-scalar product, however, such topology is outside from the aims of this document.
\section{Mellin transform and Paley-Wiener theorems}\label{S:MellinPaleyWiener}

Following \cite{FGD} and \cite{P79}, we {now recall } Paley-Wiener {type} theorems for the Mellin transform. We begin with a brief summary of the main properties of the Mellin transform we will need.

\subsection{The Mellin transform and some of its properties}\label{SS:MellinProperties}

{\bf Basic properties of the Mellin transform.} We follow \cite{FGD} to recall some properties of the Mellin transform we will use throughout this Chapter.\\
\\
\textit{Fundamental strip:} Let $f(t)$ be a continuous function on $\R_+$ such that
$$f(t)=\begin{cases}  O(t^{-\alpha}) &\mbox{ for } t\rightarrow 0,\\
  O(t^{-\beta}) &\mbox{ for } t\rightarrow \infty.
 \end{cases}$$
for some real numbers $\alpha<\beta$. The convergence of $\mathcal{M}[f]$ follows from the inequality
$$|\mathcal{M}[f](s)|\leq C\int_0^1 t^{\textup{Re}\, s-1+\alpha}dt+c\int_1^\infty t^{\textup{Re}\, s-1+\beta}dt,$$
where $c$ and $C$ denote some constants. Then, $\mathcal{M}[f](s)$ exists for any $s\in\C$ in the strip $\{\alpha<\textup{Re}\, s<\beta \}$. The largest open strip in which the integral (\ref{Mellin.integral}) converges
is called the {\em fundamental strip} for $f$. By the Cauchy-Riemann equations, $\mathcal{M}[f](s)$ is analytic inside its fundamental strip.\\
\\
By means of the change of variables $t=e^{-x}$, and the formula for inverse of the Fourier transform (\ref{Eq:def.Fourier}), we can obtain a formula for the inverse of the Mellin transform.

\begin{prop}\label{Inverse.Mellin.formula} {\em( \cite{FGD}, Theorem 2.)}
 Let $f(t)$ be a continuous integrable function  with fundamental strip $\la a,b\ra$. If there exists a real number $c\in (a,b)$ such that $\mathcal{M}[f](c + it)$ is integrable. Then, for $s=c+i\eta $, the following equality holds
 $$f(t) =\frac{1}{2\pi i}\int_{c-i\infty}^{c+i\infty}t^{-(c+i\eta)} \mathcal{M}[f](c+i\eta)d\eta.$$
\end{prop}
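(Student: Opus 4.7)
The natural strategy, explicitly suggested by the paragraph preceding the statement, is to reduce the inversion of $\mathcal{M}$ to the classical Fourier inversion formula via the exponential change of variables $t = e^{-x}$. Under this substitution one has $dt/t = -dx$ and $t^{s} = e^{-sx}$, so that
\begin{equation*}
\mathcal{M}[f](s) = \int_{-\infty}^{\infty} e^{-sx}\, f(e^{-x})\, dx.
\end{equation*}
Setting $s = c + i\eta$ and introducing the auxiliary function $g(x) := e^{-cx} f(e^{-x})$, this rewrites as
\begin{equation*}
\mathcal{M}[f](c + i\eta) = \int_{-\infty}^{\infty} e^{-i\eta x}\, g(x)\, dx,
\end{equation*}
i.e.\ $\mathcal{M}[f](c + i\eta)$ coincides, up to the normalising constant, with the ordinary Fourier transform $\widehat{g}(\eta)$.

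Before invoking Fourier inversion I would verify its two standing hypotheses. Continuity of $g$ is immediate from that of $f$, and for $L^{1}$-integrability of $g$ the change of variables yields
\begin{equation*}
\int_{-\infty}^{\infty} |g(x)|\, dx = \int_{0}^{\infty} t^{c-1}\, |f(t)|\, dt,
\end{equation*}
which is finite because $c$ lies strictly inside the fundamental strip: the assumed $O$-bounds for $f$ near $0$ and $\infty$ control both tails of the integral, just as in the estimate of $|\mathcal{M}[f](s)|$ that precedes the statement. The $L^{1}$-integrability of $\eta \mapsto \mathcal{M}[f](c + i\eta) = \widehat{g}(\eta)$ is precisely the additional hypothesis of the proposition.

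With $g,\widehat{g} \in L^{1}(\R)$ and $g$ continuous, the classical pointwise Fourier inversion theorem gives
\begin{equation*}
g(x) = \frac{1}{2\pi}\int_{-\infty}^{\infty} e^{i\eta x}\, \mathcal{M}[f](c + i\eta)\, d\eta.
\end{equation*}
Substituting back $g(x) = e^{-cx} f(e^{-x})$, multiplying by $e^{cx}$, and reverting the substitution via $e^{(c+i\eta)x} = t^{-(c+i\eta)}$ produces exactly the formula in the statement, since $\frac{1}{2\pi i}\int_{c - i\infty}^{c + i\infty} \cdots\, ds$ rewrites as $\frac{1}{2\pi}\int_{-\infty}^{\infty} \cdots\, d\eta$ under $ds = i\, d\eta$ along the vertical line $\textup{Re}\, s = c$. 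I do not expect any serious obstacle; the only real care needed is to keep the normalisation conventions for $\mathcal{F}$ (the symmetric $1/\sqrt{2\pi}$ chosen in (\ref{Eq:def.Fourier})) and $\mathcal{M}$ (no prefactor in (\ref{Mellin.integral})) consistent, and to spell out the integrability of $g$ from the fundamental-strip decay estimates.
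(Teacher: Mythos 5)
Your argument is correct and follows exactly the route the paper indicates: the substitution $t=e^{-x}$ turns $\mathcal{M}[f](c+i\eta)$ into the Fourier transform of $g(x)=e^{-cx}f(e^{-x})$, and the stated hypotheses are precisely what the classical pointwise Fourier inversion theorem requires. The paper itself only cites this result and sketches the same reduction in the sentence preceding the statement, so nothing further is needed.
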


The Mellin and Fourier transform have similar properties due to relation (\ref{Fourier.Mellin.Rel}). Some of these properties are enunciated in the following Proposition. A proof of these properties follows by straight forward calculations (see \cite{FGD}).

\begin{prop}\label{Properties.Mellin}
 Let $f$ be a function whose Mellin transform admits  the fundamental strip $\la a,b\ra$\index{fundamental strip}, and let $p$ be a non zero real number, $r,q$ positive real numbers. We have:

%
\begin{enumerate}
\item [\textup{(i)}] $ \mathcal{M}[t^zf(t)](s)=\mathcal{M}[f](s+z),  \text{ on }\la a-\textup{Re}\, z,b-\textup{Re}\, z\ra$;
\item [\textup{(ii)}] $  \mathcal{M}[f(t^p)](s)=\frac{1}{p}\mathcal{M}[f]   \big(    \frac{s}{p}   \big),  \text{ on }\la pa,pb\ra$;
\item [\textup{(iii)}] $ \mathcal{M} \big[    \frac{d}{dt}f(t)  \big] (s)= -(s - 1)\mathcal{M}[f](s - 1),  \text{ on }\la a-1,b-1 \ra$;
\item [\textup{(iv)}] $ \mathcal{M} \big[   \ln t f(t) \big]  (s)=\frac{d}{ds} \mathcal{M}[f](s),  \text{ on }\la a,b\ra$;
\item [\textup{(v)}] $ \mathcal{M}[t\frac{d}{dt}f(t)](s)=-s\mathcal{M}[f](s)$;
\item [\textup{(vi)}]$ \mathcal{M}\Big[\int_0^t f(x)dx\Big](s)=-\frac{1}{s}\mathcal{M}[f(t)](s)$.
\end{enumerate}
 Finally, we denote by $H$ the Heaviside function on $[0,1]$ then
\begin{enumerate}
\item [\textup{(vii)}]
$\mathcal{M}[H(t)](s)=\frac{1}{s}\ \text{( resp. } \mathcal{M}[1-H(t)]=-\frac{1}{s}),  \text{ on }\la 0,\infty\ra \text{ ( resp. } \text{ on }\la -\infty,0\ra)$;
\item [\textup{(viii)}]
$\mathcal{M}[H(t)t^d\ln^k t](s)=\frac{(-1)^kk!}{(s+d)^{k+1}}   \text{ on }\la -d,\infty\ra$;
\item [\textup{(ix)}]
$\mathcal{M}[(1-H(t))t^d\ln^k t](s)=-\frac{(-1)^kk!}{(s+d)^{k+1}}   \text{ on }\la -\infty, d\ra$.
\end{enumerate}

\end{prop}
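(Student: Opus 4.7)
My plan is to verify each identity by direct manipulation of the defining integral $\mathcal{M}[f](s)=\int_0^\infty t^{s-1}f(t)\,dt$, working inside the fundamental strip of $f$ and extending by analytic continuation when necessary. Identities (i), (ii) and (iv) are immediate: (i) absorbs the factor $t^z$ into $t^{s-1}$ to shift the exponent, so the fundamental strip translates by $-\textup{Re}\,z$; (ii) is the change of variable $u=t^p$, which rescales both the exponent $s$ and the Haar measure $dt/t$ by the factor $p$; and (iv) is differentiation under the integral sign, legitimate because $\mathcal{M}[f]$ is holomorphic on its fundamental strip, so one may differentiate the integrand $t^{s-1}f(t)$ in $s$ to produce the extra factor $\ln t$.

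Identities (iii), (v) and (vi) reduce to integration by parts. For (iii) I would write
$$\int_0^\infty t^{s-1}f'(t)\,dt=\bigl[t^{s-1}f(t)\bigr]_0^\infty-(s-1)\int_0^\infty t^{s-2}f(t)\,dt,$$
observe that the boundary contribution vanishes once $\textup{Re}\,s$ lies in the subinterval dictated by the asymptotic decay of $f$ at $0$ and $\infty$, and obtain $-(s-1)\mathcal{M}[f](s-1)$; the identity on the full strip $\la a-1,b-1\ra$ then follows from the holomorphy of both sides. Property (v) is the composition of (i) and (iii), namely $\mathcal{M}[t f'(t)](s)=\mathcal{M}[f'](s+1)=-s\mathcal{M}[f](s)$, and (vi) is the analogous integration-by-parts applied to the antiderivative $g(t)=\int_0^t f(x)\,dx$ with $g'(t)=f(t)$, the boundary terms again vanishing in the relevant strip.

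Identities (vii)--(ix) are direct computations. For (vii) one evaluates $\int_0^1 t^{s-1}\,dt=1/s$ on $\la 0,\infty\ra$ and $\int_1^\infty t^{s-1}\,dt=-1/s$ on $\la-\infty,0\ra$. For (viii) the substitution $u=-\ln t$ converts $\int_0^1 t^{s+d-1}\ln^k t\,dt$ into the Eulerian integral $(-1)^k\int_0^\infty u^k e^{-(s+d)u}\,du=(-1)^k k!/(s+d)^{k+1}$ on $\textup{Re}(s+d)>0$; (ix) is the mirror computation with $u=\ln t$ on $(1,\infty)$, producing the opposite sign because one must write $s+d=-w$ with $\textup{Re}\,w>0$ in order to reduce to the Gamma integral. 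The only mildly delicate step throughout is the bookkeeping of the boundary terms in (iii), (v) and (vi), since the strip on which the integration-by-parts is literally valid is narrower than the one on which both sides agree after meromorphic continuation; once the identity is proved on the common strip of absolute convergence, analytic continuation promotes it to the full strip stated.
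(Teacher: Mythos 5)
Your proposal is correct and is exactly the ``straightforward calculation'' the paper invokes (the paper gives no proof of its own, simply deferring to \cite{FGD}): absorbing $t^z$ into the exponent for (i), the substitution $u=t^p$ for (ii), integration by parts for (iii), (v), (vi), differentiation under the integral sign for (iv), and reduction to the Eulerian integral $\int_0^\infty u^k e^{-wu}\,du=k!/w^{k+1}$ for (vii)--(ix). One small remark: carrying out your integration by parts for (vi) literally yields $-\frac{1}{s}\mathcal{M}[f](s+1)$, which is the identity as it appears in \cite{FGD}; the paper's displayed form $-\frac{1}{s}\mathcal{M}[f](s)$ has dropped the shift, so the discrepancy is a typo in the statement rather than a flaw in your argument.
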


\subsection{A Paley-Wiener theorem for Schwartz functions smooth up to the boundary with positive support}\label{P-W.mellin.smooth}

Let $\beta\in\C$, in the following $\Gamma_{\beta}$ will denote the vertical complex line $\{s\in\C\ |\  \textup{Re}\, s =\beta \}$.

\begin{prop}\label{Prop.compac-mellin}{\em (\cite{P79}, Theorem 3.1.)}
A complex function $h$  on $\C$. Then $h$ is a Mellin transform of some $u\in C^\infty_c(\R_+)$
 if and only if $h$ is analytic, and for any positive integer $m$ there exist constants  $C_m$ and $a>0$ such that
\begin{equation*}
|h(s)|\leq C_m \la  s\ra^{-m}e^{a|\textup{Re}\, s |}.
\end{equation*}
 In this case, we have $\textup{supp}\ u\subset [e^{-a},e^a]$.
\end{prop}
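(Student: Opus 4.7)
The plan is to reduce this Paley-Wiener statement for the Mellin transform to the classical Paley-Wiener theorem for the Fourier transform via the substitution $t=e^{-x}$, exploiting relation (\ref{Fourier.Mellin.Rel}). The map $f \mapsto f\circ\chi$, where $\chi(x)=e^{-x}$, identifies $C^\infty_c(\R_+)$ with $C^\infty_c(\R)$ and, crucially, sends $\supp f\subset [e^{-a},e^a]$ to $\supp(f\circ\chi)\subset[-a,a]$. Writing $s=\xi+i\eta$ and performing the substitution in (\ref{Mellin.integral}), we get $\mathcal{M}[f](s)=\int_{\R}e^{-\xi x}(f\circ\chi)(x)e^{-i\eta x}\,dx$, which up to normalization is the Fourier transform of the function $e^{-\xi x}(f\circ\chi)(x)$.

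For the necessity, I would start from $f\in C^\infty_c(\R_+)$ with $\supp f\subset[e^{-a},e^a]$. Setting $g=f\circ\chi\in C^\infty_c(\R)$ with $\supp g\subset[-a,a]$, analyticity of $\mathcal{M}[f](s)=\int_{-a}^{a}e^{-sx}g(x)\,dx$ in $s$ on all of $\C$ is immediate by differentiation under the integral sign (the integrand is holomorphic in $s$ and uniformly bounded on compacts of $s$ times $[-a,a]$). For the bound, I would perform $m$ integrations by parts in $x$, the boundary contributions vanishing because $g^{(j)}(\pm a)=0$ for all $j$, to obtain
\begin{equation*}
\mathcal{M}[f](s) \;=\; \frac{(-1)^m}{s^m}\int_{-a}^{a} e^{-sx}g^{(m)}(x)\,dx,
\end{equation*}
and then estimate $|e^{-sx}|=e^{-\xi x}\leq e^{a|\xi|}$ on $[-a,a]$. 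This yields $|\mathcal{M}[f](s)|\leq C_m |s|^{-m}e^{a|\textup{Re}\, s|}$, which is equivalent to the stated bound with $\langle s\rangle^{-m}$ after adjusting constants.

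For the sufficiency, given an entire $h$ satisfying the bounds, I would define $u$ via the Mellin inversion formula from Proposition \ref{Inverse.Mellin.formula},
\begin{equation*}
u(t) \;=\; \frac{1}{2\pi}\int_{-\infty}^{\infty} t^{-(\xi+i\eta)}\,h(\xi+i\eta)\,d\eta,
\end{equation*}
which converges absolutely since $h$ decays faster than any polynomial on vertical lines, and can be differentiated under the integral sign to any order, giving smoothness of $g(x):=u(e^{-x})=\frac{1}{2\pi}\int e^{(\xi+i\eta)x}h(\xi+i\eta)\,d\eta$. Since $h$ is entire with rapid decay in the imaginary direction uniformly on bounded strips, Cauchy's theorem implies this integral is independent of $\xi$, hence $g$ is well defined.

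The main step, which is also the subtlest, is to show $\supp g\subset[-a,a]$, equivalently $\supp u\subset[e^{-a},e^a]$. For fixed $x$ and any $\xi\in\R$, the bound on $h$ gives $|e^{(\xi+i\eta)x}h(\xi+i\eta)|\leq C_m\langle \xi+i\eta\rangle^{-m}e^{a|\xi|+\xi x}$. When $x>a$, I let $\xi\to -\infty$: then $e^{a|\xi|+\xi x}=e^{\xi(x-a)}\to 0$; when $x<-a$, I let $\xi\to +\infty$: then $e^{a|\xi|+\xi x}=e^{\xi(x+a)}\to 0$. In either case dominated convergence forces $g(x)=0$, so $u$ has the required support. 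The key technical point is uniformity of the decay $\langle \xi+i\eta\rangle^{-m}$ in $\xi$, which ensures integrability along each shifted contour. Finally, $\mathcal{M}[u]=h$ follows from Proposition \ref{Inverse.Mellin.formula} (or directly by computing $\mathcal{M}[u](s)$ and recognizing the Fourier inversion formula), completing the proof.
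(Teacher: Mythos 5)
The paper does not prove this proposition itself — it is quoted directly from \cite{P79}, Theorem 3.1 — but your argument is correct and is precisely the standard reduction (used in that reference) of the Mellin statement to the classical Paley--Wiener theorem via $t=e^{-x}$: integration by parts for the forward bound, and contour shifting $\xi\to\pm\infty$ in the inversion integral for the support statement. The only cosmetic points are that the support step is a contour-shift-plus-limit argument rather than ``dominated convergence,'' and the passage from $|s|^{-m}$ to $\la s\ra^{-m}$ near $s=0$ needs the trivial $m=0$ bound, which you correctly flag as an adjustment of constants.
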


\begin{lemma}\label{Lemma.Planchelet}
 The Mellin transform $\mathcal{M}$ maps $C^\infty_c(\R_+)$ to $L^2(\Gamma_{\frac{1}{2}})$ and extends to $$\mathcal{M}:L^2(\R_+)\rightarrow L^2(\Gamma_{\frac{1}{2}}),$$
 which is an isometric isomorphism, where the norm on $L^2(\Gamma_{\frac{1}{2}})$ is given by
 $$\|\mathcal{M}[f]\|_{L^2(\Gamma_{\frac{1}{2}})}=\int_{-\infty}^\infty
 \Big|\mathcal{M}[f]\Big(\frac{1}{2}+i\eta\Big)\Big|^2d\eta.$$
\end{lemma}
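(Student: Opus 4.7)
The plan is to reduce the statement to the classical Plancherel theorem for the Fourier transform via the Mellin--Fourier identity (\ref{Fourier.Mellin.Rel}) established earlier. Specialising to the vertical line $\textup{Re}\, s=1/2$, for $s=\tfrac{1}{2}+i\eta$ one has
\begin{equation*}
\mathcal{M}[f]\bigl(\tfrac{1}{2}+i\eta\bigr)=\mathcal{F}\circ m_{1/2}[f](\eta)=\mathcal{F}\bigl[e^{-t/2}f(e^{-t})\bigr](\eta),
\end{equation*}
so it suffices to check that (a) the substitution operator $m_{1/2}\colon f\mapsto e^{-t/2}f(e^{-t})$ is a bijective isometry from $L^2(\R_+)$ onto $L^2(\R)$, and (b) the Fourier transform with the chosen normalisation $\normd x=(2\pi)^{-1/2}dx$ is an isometric isomorphism of $L^2(\R)$.

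First I would verify (a) by the change of variables $x=e^{-t}$, under which $dx=-e^{-t}dt=-x\,dt$. Then
\begin{equation*}
\|m_{1/2}f\|_{L^2(\R)}^{2}=\int_{-\infty}^{\infty}e^{-t}\,|f(e^{-t})|^{2}\,dt=\int_{0}^{\infty}|f(x)|^{2}\,dx=\|f\|_{L^2(\R_+)}^{2},
\end{equation*}
and the inverse is given explicitly by $g(t)\mapsto x^{1/2}g(-\ln x)$, proving bijectivity. For (b) I would just invoke the classical Plancherel theorem (the $(2\pi)^{-1/2}$ normalisation was precisely chosen to make $\mathcal F$ a unitary on $L^2(\R)$).

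Next I would combine (a) and (b): composing the two isometries yields that, for $f\in C^{\infty}_{c}(\R_+)\subset L^{2}(\R_+)$,
\begin{equation*}
\int_{-\infty}^{\infty}\Bigl|\mathcal{M}[f]\bigl(\tfrac{1}{2}+i\eta\bigr)\Bigr|^{2}d\eta=\|\mathcal{F}(m_{1/2}f)\|_{L^2(\R)}^{2}=\|f\|_{L^2(\R_+)}^{2},
\end{equation*}
so $\mathcal{M}$ maps $C^{\infty}_{c}(\R_+)$ isometrically into $L^{2}(\Gamma_{1/2})$ (after identifying $\Gamma_{1/2}$ with $\R$ via the parametrisation $\eta\mapsto \tfrac{1}{2}+i\eta$). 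By density of $C^{\infty}_{c}(\R_+)$ in $L^2(\R_+)$, this isometry extends uniquely to an isometric embedding $\mathcal{M}\colon L^{2}(\R_+)\to L^{2}(\Gamma_{1/2})$.

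Finally, to obtain surjectivity I would run the argument backwards: given $h\in L^2(\Gamma_{1/2})$, view it as an element $\tilde h\in L^2(\R)$ via the same parametrisation, form $g=\mathcal{F}^{-1}\tilde h\in L^2(\R)$, and set $f=m_{1/2}^{-1}g\in L^2(\R_+)$; by the two unitarities, $\mathcal{M}f=h$ on $\Gamma_{1/2}$. There is essentially no obstacle here, the only mildly delicate point being the bookkeeping that the $L^{2}(\Gamma_{1/2})$ norm, as written in the statement, should be read as the square of the norm (or equivalently the norm squared), so that the identification with the Fourier Plancherel formula is dimensionally consistent; everything else reduces to a one-line change of variables and an appeal to Plancherel's theorem.
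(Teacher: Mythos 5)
Your reduction to the classical Plancherel theorem via the unitary substitution $m_{1/2}$ is the natural argument, and it is the one the paper implicitly relies on: the lemma is stated without proof, immediately after the Mellin--Fourier relation (\ref{Fourier.Mellin.Rel}), which is exactly the identity you exploit. Two small points. First, your explicit formula for the inverse of $m_{1/2}$ has the wrong sign in the exponent: solving $e^{-t/2}f(e^{-t})=g(t)$ with $x=e^{-t}$, $t=-\ln x$, gives $f(x)=x^{-1/2}g(-\ln x)$, not $x^{1/2}g(-\ln x)$ (with your formula the inverse would fail to be isometric). This does not damage the argument, since the bijectivity of $m_{1/2}$ is clear in any case. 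Second, the $2\pi$ bookkeeping: with the paper's normalisation $\normd x=(2\pi)^{-1/2}dx$ in (\ref{Eq:def.Fourier}), the relation (\ref{Fourier.Mellin.Rel}) as written differs from the definition (\ref{Mellin.integral}) by a factor of $\sqrt{2\pi}$, so the map is an isometry onto $L^2(\Gamma_{1/2})$ with the unnormalised measure $d\eta$ only up to the constant $2\pi$; the classical statement is $\int_0^\infty|f(x)|^2\,dx=\frac{1}{2\pi}\int_{-\infty}^{\infty}\big|\mathcal{M}[f]\big(\frac{1}{2}+i\eta\big)\big|^2\,d\eta$. This inconsistency is already present in the paper's own statement (which, as you observe, also writes the squared norm where it says norm), so your proof is correct once a single consistent convention is fixed; everything else in your argument is sound.
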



We recall a a Paley-Wiener type theorem.

\begin{prop}\label{Prop.L2}{\em (\cite{P79}, Theorem 3.2.)}
 Let $u\in L^2(\R_+)$, and assume $\textup{supp}\,  u\subset (0,e^a]$ for some $a>0$. Then
 the Mellin transform $h(s):=\mathcal{M}[u](s)$ extends from $\Gamma_{1/2}$ to an analytic function
 $h(s)$ in $\{\textup{Re}\, s>1/2\}$ such that
 \begin{itemize}
\item[(i)]  $h_\xi(\eta):=h(\xi+1/2+i\eta)\in L^2(\R_\eta)$ for every $\xi>0,$ and satisfies
 $$h_\xi(\eta)\longrightarrow \mathcal{M}[u](1/2+i\eta)\text{ in } L^2(\R_\eta)\text{ as }\xi\rightarrow 0^+;$$
\item[(ii)] there exists a constant $C>0$ such  that
\begin{equation}\label{Nadia.1.1.37}
\| h_\xi\|_{L^2(\R_\eta)}\leq Ce^{a\xi}.
\end{equation}
\end{itemize}
 Conversely, if $h$ is an analytic function in $\{\textup{Re}\, s>1/2\}$ satisfying the estimates (\ref{Nadia.1.1.37})
 for some constants $C,a>0$, then $h$ is the Mellin transform of a function $u\in L^2(\R_+)$ with $\textup{supp}\, u\subset (0,e^a]$.
\end{prop}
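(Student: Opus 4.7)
My strategy is to reduce everything to the classical $L^2$ Paley--Wiener theorem on $\R$ by the change of variables $t = e^{-x}$. Setting $(Uu)(x) := e^{-x/2}\,u(e^{-x})$ defines an isometric isomorphism $U: L^2(\R_+) \to L^2(\R)$, and a direct computation gives
\begin{equation*}
\mathcal{M}[u]\Big(\tfrac{1}{2} + i\eta\Big) = (2\pi)^{1/2}\,\mathcal{F}[Uu](\eta),
\end{equation*}
while the support condition $\textup{supp}\,u \subset (0, e^a]$ is equivalent to $\textup{supp}\,(Uu) \subset [-a,\infty)$. Both directions thus become statements about the Fourier transform of an $L^2$-function supported in the half-line $[-a,\infty)$.

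For the forward direction, set $g := Uu$. By Proposition~\ref{Properties.Mellin}(i), for $\xi > 0$,
\begin{equation*}
h_\xi(\eta) = \mathcal{M}[t^\xi u]\Big(\tfrac{1}{2} + i\eta\Big) = (2\pi)^{1/2}\,\mathcal{F}\!\left[e^{-\xi x}\,g\right](\eta).
\end{equation*}
Since $|e^{-\xi x}| \leq e^{a\xi}$ on $\textup{supp}\,g$, Plancherel yields (\ref{Nadia.1.1.37}) with $C = \|u\|_{L^2(\R_+)}$. Analyticity of $h$ on $\{\textup{Re}\,s > 1/2\}$ follows from Cauchy--Schwarz (which places $t^{s-1}u \in L^1(0,e^a]$) combined with Morera's theorem, and the $L^2$-convergence $h_\xi \to \mathcal{M}[u](1/2+i\cdot)$ reduces by Plancherel to $\|e^{-\xi x}\,g - g\|_{L^2(\R)} \to 0$ as $\xi \to 0^+$, which is dominated convergence on $[-a,\infty)$.

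For the converse, I would set $\tilde{h}(\zeta) := h(1/2 + i\zeta)$ for $\zeta \in \C_-$, so that $\tilde{h}_a(\zeta) := e^{-ia\zeta}\,\tilde{h}(\zeta)$ satisfies $\sup_{\xi > 0}\|\tilde{h}_a(\cdot - i\xi)\|_{L^2(\R)} \leq C$ by~(\ref{Nadia.1.1.37}). Hence $\tilde{h}_a$ lies in the Hardy space $H^2(\C_-)$, and the classical $L^2$-Paley--Wiener theorem produces $g_a \in L^2(\R)$ with $\textup{supp}\,g_a \subset [0,\infty)$ and $\tilde{h}_a = \mathcal{F}[g_a]$. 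Unwinding gives $\tilde{h} = \mathcal{F}[g]$ for $g(x) := g_a(x+a)$ supported in $[-a,\infty)$, and the candidate $u := (2\pi)^{-1/2}\,U^{-1}g$ lies in $L^2(\R_+)$ with $\textup{supp}\,u \subset (0,e^a]$. Equality $\mathcal{M}[u] = h$ on the full half-plane then follows from the forward direction applied to this $u$, as both sides agree at every $s = 1/2+\xi+i\eta$ with the common value $\mathcal{F}[e^{-\xi x}g](\eta)$.

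The principal obstacle I foresee is the support conclusion in the converse. If one prefers to avoid invoking $H^2$-theory, the same conclusion can be obtained by a direct Fatou-type argument: define $u$ as the inverse Mellin transform (via Lemma~\ref{Lemma.Planchelet}) of the $L^2$ boundary trace of $h_\xi$ at $\xi \to 0^+$ (whose existence and uniqueness come from the uniform $L^2$-bound and analyticity). Then shift the inverse Mellin contour from $\Gamma_{1/2}$ to $\Gamma_{1/2 + \xi}$; Plancherel and (\ref{Nadia.1.1.37}) give
\begin{equation*}
\int_0^\infty |u(t)|^2\,\Big(\tfrac{t}{e^a}\Big)^{2\xi}\,\frac{dt}{t} \leq C^2
\end{equation*}
for every $\xi > 0$, and letting $\xi \to \infty$ via monotone convergence forces $u = 0$ a.e. on $(e^a,\infty)$, completing the argument.
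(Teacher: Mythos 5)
Your proof is correct. Note that the paper itself offers no proof of this proposition --- it is recalled verbatim from \cite{P79} (Theorem 3.2) --- so there is no internal argument to compare against; your reduction via $t=e^{-x}$ to the classical $L^2$ Paley--Wiener/Hardy-space theorem on a half-plane is the standard route and is exactly the dictionary the paper itself sets up in equation (\ref{Fourier.Mellin.Rel}) and Lemma \ref{Lemma.Planchelet}. The only blemishes are cosmetic: with the paper's normalization of $\mathcal{F}$ the constant in (\ref{Nadia.1.1.37}) comes out as $(2\pi)^{1/2}\|u\|_{L^2(\R_+)}$ rather than $\|u\|_{L^2(\R_+)}$, and in your final display the Plancherel identity for $\|t^{\xi}u\|_{L^2(\R_+)}$ produces $\int_0^\infty |u(t)|^2 (t e^{-a})^{2\xi}\,dt$ rather than the measure $dt/t$; neither affects the monotone-convergence conclusion that $u$ vanishes a.e.\ on $(e^a,\infty)$.
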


We may restrict the domain of the map $\mathcal{M}:L^2(\R_+)\longrightarrow L^2(\Gamma_{1/2})$ to $\mathcal{S}(\overline{\R}_+)$. 
Proposition \ref{Smooth.P-W.Mellin} can be compared with  Theorem \ref{Paley-Wiener.thm.one.sided}, concerning the Fourier transform. The following result which we refer to as the Paley-Wiener theorem for functions smooth  {up} to the boundary,  {yields} a singular expansion:
 \\
\\
Let us recall a useful definition (see, e.g. \cite{FGD}): We call a {\em singular expansion} of a meromorphic function $h(s)$ in $\Omega\subset\C$, denoted by $\cong$, a formal sum of singular  elements of $h(s)$ at each pole of $h$ in $\Omega$. It is basically a sum of Laurent expansions around all poles truncated
 to the $O(1)$. For instance,
\begin{equation}\label{Eq:exam.sing.expans}
\frac{1}{s^2(s+1)}\cong \Big[\frac{1}{s+1}+2\Big]_{s=-1} +\Big[\frac{1}{s^2}-\frac{1}{s}\Big]_{s=0}+\Big[\frac{1}{2}\Big]_{s=1}\text{ for } s\in(-2,2).
\end{equation}

\begin{prop}\label{Smooth.P-W.Mellin}
The Mellin transform $\mathcal{M}$ maps functions $u$ in $\mathcal{S}(\overline{\R}_+)$ to holomorphic unctions $\mathcal{M}[u]$ on the strip $\la 0,\infty\ra$ which can be extended to meromorphic function on $\C$ with simple poles at $s=-j$ for $j\in\N,$ i.e.
 $\mathcal{M}[u](s)\cong \sum\limits_{j=0}^\infty a_j \frac{1}{(s+j)}$,
 and  such that for any excision function $\chi$ for the set $\{-j\}_{j\in\N}$, we have $$\chi(s)\mathcal{M}[u](s)\big|_{\Gamma_\eta}\in\mathcal{S}(\Gamma_\eta).$$ 
\end{prop}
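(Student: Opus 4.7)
The plan is to split $u\in\mathcal{S}(\overline{\R}_+)$ using its Taylor expansion at $0$, exhibit an explicit meromorphic extension of $\mathcal{M}[u]$, and then transport Schwartz behavior to the vertical lines via the Mellin--Fourier identity (\ref{Fourier.Mellin.Rel}).

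Convergence and holomorphy of $\mathcal{M}[u](s)=\int_0^{\infty}t^{s-1}u(t)\,dt$ on $\la 0,\infty\ra$ is immediate, since $u$ is bounded near $0$ and rapidly decreasing at $\infty$. For the meromorphic extension I would fix a cut-off $\omega$ around zero and, for each $N\in\N$, use Taylor's theorem to write
\begin{equation*}
u(t)=\sum_{j=0}^{N}\frac{u^{(j)}(0)}{j!}\,t^{j}\omega(t)+r_{N}(t),
\end{equation*}
with $r_N(t)=O(t^{N+1})$ at $0$ (and the analogous bounds on derivatives) and $r_N$ rapidly decreasing at $\infty$. Then $\mathcal{M}[r_N]$ is holomorphic on the enlarged strip $\la -(N+1),\infty\ra$. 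For the singular blocks, an integration by parts (valid on $\textup{Re}\,s>-j$) gives
\begin{equation*}
\mathcal{M}[t^{j}\omega](s)=-\frac{\mathcal{M}[t\omega'](s+j)}{s+j},
\end{equation*}
and since $t\omega'\in C_c^{\infty}(\R_+)$, Proposition \ref{Prop.compac-mellin} guarantees that $\mathcal{M}[t\omega']$ is entire and Schwartz on every vertical line. Hence each $\mathcal{M}[t^{j}\omega]$ is meromorphic on $\C$ with a single simple pole at $s=-j$ of residue $-\mathcal{M}[t\omega'](0)=-\int_{0}^{\infty}\omega'(t)\,dt=1$. Letting $N\to\infty$ assembles a meromorphic extension of $\mathcal{M}[u]$ to $\C$ with simple poles at $s=-j$ of residue $a_j=u^{(j)}(0)/j!$, which is exactly the singular expansion asserted.

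For the Schwartz property along $\Gamma_{\beta}$, I would choose $N$ with $\beta>-(N+1)$ and apply (\ref{Fourier.Mellin.Rel}) to the remainder:
\begin{equation*}
\mathcal{M}[r_N](\beta+i\eta)=\mathcal{F}\bigl[e^{-\beta\,\cdot}\,r_N(e^{-\cdot})\bigr](\eta).
\end{equation*}
A check at $t\to\pm\infty$ shows $e^{-\beta t}r_N(e^{-t})\in\mathcal{S}(\R)$ (the $O(e^{-(N+1)t})$ decay of $r_N(e^{-t})$ as $t\to+\infty$ dominates $e^{-\beta t}$, and the rapid decrease of $u$ at infinity beats $e^{-\beta t}$ as $t\to-\infty$), so its Fourier transform is Schwartz in $\eta$. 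For the singular blocks, $\chi(s)\mathcal{M}[t^{j}\omega](s)=-\chi(s)\mathcal{M}[t\omega'](s+j)/(s+j)$: the factor $\chi(s)/(s+j)$ is smooth and bounded on $\Gamma_\beta$ (it even decays like $1/|\eta|$ at infinity), while $\mathcal{M}[t\omega']$ is already Schwartz on vertical lines by Proposition \ref{Prop.compac-mellin}; the product is therefore Schwartz on $\Gamma_\beta$. Adding the finitely many blocks to $\chi\mathcal{M}[r_N]$ delivers $\chi\,\mathcal{M}[u]|_{\Gamma_\beta}\in\mathcal{S}(\Gamma_\beta)$.

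The main obstacle is the bookkeeping on vertical lines $\Gamma_{-j}$ that pass through a pole: one must use that the excision function $\chi$ vanishes in a neighborhood of $-j$ so that $\chi(s)/(s+j)$ remains smooth and bounded on $\Gamma_{-j}$; everything else is routine Taylor-remainder analysis combined with the Mellin--Fourier dictionary.
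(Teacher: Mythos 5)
Your proof is correct, and it takes a genuinely different route from the paper's. The paper deduces the meromorphic continuation and the singular expansion by specializing Theorem \ref{P-W.Mellin.Asymp.Log-poly} (itself resting on the Direct and Converse Mapping Theorems of \cite{FGD}) to the power set $\PP_0$, and then argues the Schwartz property on $\Gamma_\xi$ by writing $\mathcal{M}[u](\xi+i\eta)=\mathcal{F}[e^{-\xi t}u(e^{-t})](\eta)$ directly for $u$ itself. You instead work entirely by hand: the integration-by-parts identity $\mathcal{M}[t^{j}\omega](s)=-\mathcal{M}[t\omega'](s+j)/(s+j)$ with $t\omega'\in C_c^\infty(\R_+)$ gives the poles, the residues $a_j=u^{(j)}(0)/j!$, and (via Proposition \ref{Prop.compac-mellin}) the rapid decay of each singular block on vertical lines, while the Mellin--Fourier dictionary is applied only to the Taylor remainder $r_N$. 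This is more self-contained, and it also repairs a point the paper glosses over: for $\xi\leq 0$ the function $e^{-\xi t}u(e^{-t})$ is \emph{not} Schwartz when $u(0)\neq 0$ (it grows like $e^{|\xi|t}$ as $t\to+\infty$), so the paper's displayed estimate cannot be read literally on lines with $\textup{Re}\,s\leq 0$; your subtraction of the Taylor polynomial is exactly what makes the Fourier-side argument legitimate there. The price is slightly more bookkeeping (choosing $N$ with $\beta>-(N+1)$, checking that $\chi(s)/(s+j)$ and its $\eta$-derivatives stay bounded on lines through the poles), all of which you address. The only cosmetic remark is that you should note, as the paper does for a similar decomposition, that the construction is independent of the choice of $\omega$ modulo entire functions that are Schwartz on vertical lines, so the resulting meromorphic extension is canonical.
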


A proof can be done by using the Taylor expansion of $u(x)$ around zero; Proposition \ref{Prop.compac-mellin} and some properties of the Mellin transform in Proposition \ref{Properties.Mellin}. However, we will give a proof of this fact in Section \ref{SS:P-W.Mellin.log-poly}, using results involving Theorem \ref{P-W.Mellin.Asymp.Log-poly}, below.

\begin{remark}
Notice that if we set $\tilde{\mathcal{M}}[u](s):=\frac{1}{\Gamma(s)}\mathcal{M}[u](s)$, for $u\in\mathcal{S}(\overline{\R}_+)$, where $\Gamma(s):=\int_0^\infty t^{s-1}e^{-t}dt$ is the Gamma function,  integration by parts shows $\Gamma$ has simple poles at $-j,\ j\in\N$. {It therefore follows} from Proposition \ref{Smooth.P-W.Mellin} {that} $\tilde{\mathcal{M}}$ maps functions in $\mathcal{S}(\overline{\R}_+)$
 to  {holomorphic} functions in $\C$ .  We call $\tilde{\mathcal{M}}[u](s)$ the {\em normalized Mellin transform}.
\end{remark}


\subsection{A Paley-Wiener theorem for functions with log- polyhomogeneous asymptotic behavior at zero and positive
support}\label{SS:P-W.Mellin.log-poly}

In this section, using known Paley-Wiener type theorems,  which are recalling here,  we characterize the image under the Mellin transform of spaces $\mathcal{S}_{\PP}(\R_+)$ (cf. Theorem \ref{P-W.Mellin.Asymp.Log-poly}). \\
\\
We first prove a result relative the Mellin transform of functions of the type $\omega(t)t^d\ln^k t$, for $\omega$ a cut-off function.

\begin{lemma}\label{Lemma.Mellin.cut-off}
{For any a cut-off function $\omega$, the Mellin transform of}  $$\omega_{dk}(t):=\omega(t)t^d\ln^k t \;  \text{ and }\;  \tilde{\omega}_{dk}(t):=(1-\omega)(t)t^d\ln^k t$$    are meromorphic functions on $\C$  and $$\mathcal{M}[\omega_{dk}](s)=-\mathcal{M}[\tilde{\omega}_{dk}](s)=\frac{(-1)^kk!}{(s+d)^{k+1}}+h_{dk}(s),$$
 where $h_{dk}$ is an {entire}  function which is rapidly decreasing  in $t$ along parallel lines to $c+it$ with $c\in\la -d,\infty\ra$.
\end{lemma}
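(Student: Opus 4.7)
The plan is to reduce the lemma to the explicit Mellin identities in Proposition \ref{Properties.Mellin}(viii)--(ix) by splitting each cut-off factor against the Heaviside function $H$ on $[0,1]$.

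First I would identify the fundamental strips: $\omega_{dk}$ has compact support in $\R_+$ and behaves like $t^d\ln^k t$ as $t\to 0^+$, so $\mathcal{M}[\omega_{dk}]$ converges absolutely on $\la-\textup{Re}\, d,\infty\ra$; dually, $\tilde{\omega}_{dk}$ vanishes near $0$ and behaves like $t^d\ln^k t$ as $t\to\infty$, giving the strip $\la-\infty,-\textup{Re}\, d\ra$. Although these fundamental strips are disjoint, both Mellin transforms will admit meromorphic extensions to all of $\C$ with only a pole of order $k+1$ at $s=-d$.

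Next I would decompose
\[
\omega_{dk}(t) = H(t)\,t^d \ln^k t + \bigl[\omega(t)-H(t)\bigr]\,t^d\ln^k t.
\]
By Proposition \ref{Properties.Mellin}(viii) the first summand has Mellin transform $\frac{(-1)^k k!}{(s+d)^{k+1}}$ on $\la-\textup{Re}\, d,\infty\ra$, extending meromorphically to $\C$ with a unique pole of order $k+1$ at $s=-d$. The factor $\omega-H$ is supported in a compact subset of $\R_+$ bounded away from $0$ (both $\omega$ and $H$ equal $1$ near $0$), hence the Mellin integral of $(\omega-H)t^d\ln^k t$ converges for every $s\in\C$ and defines an entire function $h_{dk}(s)$. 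This establishes $\mathcal{M}[\omega_{dk}](s) = \frac{(-1)^k k!}{(s+d)^{k+1}} + h_{dk}(s)$. The symmetric decomposition $\tilde{\omega}_{dk} = (1-H)t^d\ln^k t - (\omega-H)t^d\ln^k t$, combined with Proposition \ref{Properties.Mellin}(ix), yields $\mathcal{M}[\tilde{\omega}_{dk}](s) = -\frac{(-1)^k k!}{(s+d)^{k+1}} - h_{dk}(s)$, which gives the claimed identity between the two Mellin transforms.

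For the behaviour on a vertical line $\Gamma_c$ with $c\in\la-\textup{Re}\, d,\infty\ra$, I would pass to the multiplicative variable via $t=e^{-x}$, obtaining
\[
\mathcal{M}[\omega_{dk}](c+i\eta) = (-1)^k\int_{-\infty}^{\infty} e^{-(c+d)x}\,e^{-i\eta x}\,x^k\,\omega(e^{-x})\,dx,
\]
which is, up to a constant, the Fourier transform at $\eta$ of $g_c(x):=e^{-(c+d)x}x^k \omega(e^{-x})$. The function $g_c$ lies in $\mathcal{S}(\R)$: $\omega(e^{-x})$ vanishes for $x$ sufficiently negative and equals $1$ for $x$ sufficiently large, while $e^{-(c+d)x}$ is rapidly decreasing as $x\to+\infty$ whenever $\textup{Re}\,(c+d)>0$; all derivatives inherit the same tail behaviour since the derivatives of $\omega(e^{-x})$ have compact support. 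Consequently $\mathcal{M}[\omega_{dk}]|_{\Gamma_c}\in\mathcal{S}(\Gamma_c)$, and via the meromorphic decomposition this controls the decay of $h_{dk}$ along $\Gamma_c$.

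The main obstacle I anticipate is reconciling the rapid decay of the full Mellin transform on $\Gamma_c$ with the fact that the explicit pole term $\frac{(-1)^k k!}{(s+d)^{k+1}}$ decays only like $|\eta|^{-(k+1)}$ along vertical lines. To obtain a sharp decay statement for $h_{dk}$ itself one must either account carefully for these two contributions, or replace $H$ by a smoothed cut-off so that $(\omega-H)t^d\ln^k t$ becomes smooth and then appeal to Proposition \ref{Prop.compac-mellin} for the decay of its Mellin transform, while extracting the pole contribution explicitly from the smoothed piece.
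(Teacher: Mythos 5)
Your proof is correct and is essentially the paper's own argument: the paper likewise splits the cut-off against the Heaviside function, obtaining the pole term from the model computation and an entire function from the compactly supported remainder; the only difference is that the paper performs the split after the substitution $t=e^{-x}$ and computes $\partial^k_{\xi+i\eta}\big[(\xi+i\eta+d)^{-1}\big]$ on the Fourier side, whereas you stay on the Mellin side and quote Proposition \ref{Properties.Mellin}(viii)--(ix) directly. That is a cosmetic difference, although your treatment of $\tilde{\omega}_{dk}$ via $(1-\omega)=(1-H)-(\omega-H)$ together with (ix) is cleaner than the paper's one-line appeal to the inversion $t\mapsto 1/t$, and it handles the fact that the two fundamental strips are disjoint more transparently. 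Your closing worry is well founded and not an artifact of your method: since $\mathcal{M}[\omega_{dk}]\big|_{\Gamma_c}$ for $c\in\la -\textup{Re}\, d,\infty\ra$ is the Fourier transform of a Schwartz function and hence rapidly decreasing, while the pole term decays only like $|\eta|^{-(k+1)}$, the difference $h_{dk}$ cannot be rapidly decreasing; concretely, $(\omega-H)\,t^d\ln^k t$ has a jump in its $k$-th derivative at $t=1$, so $h_{dk}$ decays exactly like $|\eta|^{-(k+1)}$ on vertical lines. The paper's proof silently ignores this clause of the statement, so your proposal establishes exactly as much as the paper does; the rapid decrease should properly be attributed to $\mathcal{M}[\omega_{dk}]$ itself on vertical lines of its fundamental strip, not to $h_{dk}$.
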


\begin{proof}
 We prove the result for $\omega_{dk}$, for $\tilde{\omega}_{dk}$ it is enough to observe that $\mathcal{M}[f(\frac{1}{t})](s)=-\mathcal{M}[f(t)](s)$. Let $\chi(x)=e^{-x}\  x\in\R$. For $f(x):=(\chi^\ast \omega_{dk})(x)$ and $\omega'_{dk}(x):=(\chi^\ast\omega_{dk})(x),$ we have $$f(x)=\omega'_{dk}(x)x^{-k}e^{-dx}.$$ We write $\omega'_{dk}(x)=H(x)+(\omega'_{dk}(x)-H(x))$, since the difference $\omega'_{dk}-H$  is a compactly supported function we have $h_{dk}(s)=\mathcal{M}[\omega'_{dk}-H](s)$ is an analytic function on $\C$. Applying (\ref{Fourier.Mellin.Rel}) to $f$ we have that 
 $$\mathcal{M}[\omega'_{dk}](\xi+i\eta)=\mathcal{F}^{-1}[H(x)e^{-x\xi}f(x)](\eta)+h_{dk}(s).$$
 Finally, setting $s=\xi+i\eta$ and using that $\mathcal{F}[x^nf(x)]=i^n\partial^n_\xi \mathcal{F}[f](\xi)$ and $\mathcal{F}[H(x)e^{-ax}]=(a+i\xi)^{-1}$, we obtain
\begin{eqnarray*}
\mathcal{M}[\omega'_{dk}](s) &=&\int_{-\infty}^{\infty}e^{ix(\eta+i\xi)}H(x)x^{-k}e^{-xd}\normd  x+h_{dk}(s)\\
 &=& \partial^k_{\xi+i\eta}[(\xi+i\eta)+d )^{-1}]+h_{dk}(s)\\
 &=&\frac{(-1)^kk!}{(s+d)^{k+1}}+h_{dk}(s).
\end{eqnarray*}
\end{proof}
%
The following Theorem gives us a singular expansion for the Mellin transform of functions with log-polyhomogeneous behaviour around zero and infinity. We now recall two important results in \cite{FGD}, called there {\em Direct Mapping Theorem} and {\em Converse Mapping Theorem}.

\begin{prop}\label{Direct.Thm}{\rm [Direct Mapping Theorem]}
 Let $f(t)$ be a continuous function with Mellin transform $\mathcal{M}[f](s)$ defined in non-empty strip $\la a, b\ra$.
\begin{enumerate}
\item Assume that $f(t)$ admits the following asymptotic expansion for $t\rightarrow 0$:
 $$f(t)=\sum\limits_{j=0}^{M-1}\sum\limits_{k=0}^{m_j}a_{jk} t^{p_j}\ln^{k} t+O(t^M),$$ where
 $-M<-\textup{Re}\, p_j\leq a$ with $M\in\Z$. Then $\mathcal{M}[f](s)$ has a meromorphic continuation to the strip $\la -M,  b\ra$, and  for $ s\in\la -M,b\ra$:
 $$\mathcal{M}[f](s)\cong \sum\limits_{j=0}^\infty\sum\limits_{k=0}^{m_j}a_{jk}\frac{(-1)^{k}k!}{(s+p_j)^{k+1}}.$$

\item Let $f(t)$ have the following asymptotic expansion for $t\rightarrow\infty$:
 $$f(t)=\sum\limits_{j=0}^{M-1}\sum\limits_{k=0}^{m_j}a_{jk} t^{-p_j}\ln^{k} t+O(t^{-M}),$$
 where $b\leq -\textup{Re}\, p_j<M$ with $M\in\Z$. Then $\mathcal{M}[f](s)$ has a meromorphic continuation to the strip $\la a, M \ra$, and for $ s\in\la a,M \ra$:
 $$\mathcal{M}[f](s)\cong-\sum\limits_{j=0}^\infty\sum\limits_{k=0}^{m_j}a_{jk}\frac{(-1)^{k}k!}{(s+p_j)^{k+1}}.$$
\end{enumerate}
\end{prop}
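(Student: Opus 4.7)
The plan is to split $f$ near zero into its singular model terms plus a well-behaved remainder, then apply Lemma \ref{Lemma.Mellin.cut-off} termwise. First I would fix a cut-off function $\omega$ around zero and write
$$f(t) = \sum_{j=0}^{M-1}\sum_{k=0}^{m_j} a_{jk}\,\omega(t)\, t^{p_j}\ln^k t + g(t),$$
with $g(t) := f(t) - \sum_{j,k} a_{jk}\,\omega(t)\,t^{p_j}\ln^k t$. By the asymptotic hypothesis, on the support of $\omega$ the function $g$ is $O(t^M)$ as $t\to 0$, while outside $\textup{supp}\,\omega$ it equals $f$ minus a rapidly vanishing correction. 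Hence $g$ is continuous on $\R_+$, is $O(t^M)$ at zero, and retains $f$'s decay at infinity. The basic convergence criterion recalled at the beginning of Subsection \ref{SS:MellinProperties} then makes $\mathcal{M}[g](s)$ holomorphic on $\la -M, b\ra$.

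Next I would apply Lemma \ref{Lemma.Mellin.cut-off} to each model term: for $d=p_j$,
$$\mathcal{M}\bigl[\omega(\cdot)(\cdot)^{p_j}\ln^k(\cdot)\bigr](s) = \frac{(-1)^k k!}{(s+p_j)^{k+1}} + h_{p_j k}(s),$$
with $h_{p_j k}$ entire. Summing termwise and adding $\mathcal{M}[g]$ yields
$$\mathcal{M}[f](s) = \sum_{j=0}^{M-1}\sum_{k=0}^{m_j} a_{jk}\,\frac{(-1)^k k!}{(s+p_j)^{k+1}} + \sum_{j,k} a_{jk}\, h_{p_j k}(s) + \mathcal{M}[g](s),$$
an identity which holds on $\la a,b\ra$ and whose right-hand side provides a meromorphic continuation to $\la -M, b\ra$ with poles only at $s=-p_j$. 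The hypothesis $-M<-\textup{Re}\,p_j\le a$ ensures that all new poles sit inside the enlarged strip and that the listed terms exhaust them; the principal parts at each $s=-p_j$ are exactly $\sum_k a_{jk}(-1)^k k!\,(s+p_j)^{-k-1}$, which is the claimed singular expansion.

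For part (2) the argument is symmetric: I would split off $(1-\omega)(t)\,t^{-p_j}\ln^k t$ using the asymptotics of $f$ at infinity, and apply the second formula in Lemma \ref{Lemma.Mellin.cut-off}, whose $-\frac{(-1)^k k!}{(s+p_j)^{k+1}}$ contribution accounts for the overall sign change in the singular expansion. The remainder now decays as $O(t^{-M})$ at infinity, so its Mellin transform extends holomorphically to $\la a, M\ra$, and termwise summation yields the stated expansion on that strip.

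The main obstacle I expect is bookkeeping rather than analysis: one has to verify that $g$ genuinely gains the decay $O(t^M)$ on all of $\R_+$ (not merely where $\omega=1$), and that the collection of pole contributions plus entire functions $h_{p_j k}$ matches $\mathcal{M}[f]$ up to a holomorphic remainder at each $s=-p_j$. This last point is precisely what the singular expansion symbol $\cong$ records, so no additional argument is needed once the termwise decomposition is in place.
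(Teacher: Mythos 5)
The paper does not prove this proposition at all: it is quoted (as the ``Direct Mapping Theorem'') from \cite{FGD}, so there is no internal proof to compare against. Your argument is correct and is essentially the standard proof from that reference: subtract the cut-off--localized model terms, note that the remainder $g$ is $O(t^M)$ at zero and inherits $f$'s behaviour at infinity so that $\mathcal{M}[g]$ is holomorphic on $\la -M,b\ra$, and read off the principal parts from Lemma \ref{Lemma.Mellin.cut-off}; since the $h_{p_jk}$ are entire, the only poles of the continuation are the displayed ones, which is exactly what the singular expansion $\cong$ asserts. Using the paper's own Lemma \ref{Lemma.Mellin.cut-off} as the sole analytic input is a good choice, since it makes the result self-contained where the paper leaves it as an external citation. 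Two bookkeeping points you flag should indeed be spelled out: (a) on the transition region of $\omega$ the function $g$ is merely continuous and supported away from $0$ and $\infty$, so that portion of the Mellin integral is entire and the claimed holomorphy of $\mathcal{M}[g]$ on $\la -M,b\ra$ does hold (one also needs $-M<b$, which follows from $-M<-\textup{Re}\, p_j\le a<b$); (b) in part (2), with the expansion written in terms of $t^{-p_j}$, the relevant model term is $\tilde{\omega}_{dk}$ with $d=-p_j$, so Lemma \ref{Lemma.Mellin.cut-off} literally produces poles at $s=p_j$ with principal part $-(-1)^k k!\,(s-p_j)^{-k-1}$; the formula $(s+p_j)^{-k-1}$ in the statement presupposes the opposite sign convention for the exponents at infinity (as in \cite{FGD}, where the expansion at infinity is written with $t^{p_j}$), so you must fix one convention and carry it through rather than quoting both as written.
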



\begin{prop}\label{Reverse.Thm}{\rm [Converse Mapping Theorem]}
%
%
 Let $f(t)$ be continuous on $(0,\infty)$ with Mellin transform
 $\mathcal{M}[f](s)$ with non-empty fundamental strip $\la a, b\ra$.
\begin{enumerate}
\item
 \begin{itemize}
 \item Assume that $h(s)$ admits a meromorphic continuation in the strip $\la -M,b\ra$ for some $-M<a$ with a finite number  of poles in $\la-M,a \ra$, and is analytic on $\textup{Re}\, s=-M$.
  Moreover we assume that it admits the following singular expansion:
  \begin{equation}\label{Singular.Expansion}
  \mathcal{M}[f](s)\cong \sum\limits_{j=0}^\infty\sum\limits_{k=0}^{m_j}a_{jk}\frac{(-1)^{k}k!}{(s+p_j)^{k+1}},\text{ for }s\in \la -M,a\ra,
  \end{equation}
 \item  and that there exists a real number $c$ in $\la   a,b\ra$  satisfying the estimate
 \begin{equation}\label{Integral.Estimate}
\exists r>1,\quad \mathcal{M}[f](s)=O(|s|^{-r}),
 \end{equation}
  when $|s|\rightarrow\infty \text{ in } -M\leq \textup{Re}\, s\leq c$.
 \end{itemize}
 Then $f(t)=\frac{1}{2\pi i}\int_{c-i\infty}^{c+i\infty}t^{-s}h(s)ds$ has the following asymptotic expansion at $0$,
 $$f(t)=\sum\limits_{j=0}^{M-1}\sum\limits_{k=0}^{m_j}  a_{jk} t^{p_j}\ln^{k} t+O(t^{M}).$$

\item
 \begin{itemize}
 \item Assume that $h(s)$ admits a meromorphic continuation to the strip $\la a,M\ra$ for some $M>b$
 with a finite numbers of poles $\la  a,M\ra$, and is analytic on $\textup{Re}\, s=M$.
 Moreover, {we assume that?} it admits the singular expansion (\ref{Singular.Expansion}) for $s\in \la c,M\ra.$
 \item We also assume the existence of real number $c$ in $\la  a,b \ra$ such that such that for some $r>1$,
 satisfies the estimate  (\ref{Integral.Estimate}) when $|s|\rightarrow\infty \text{ in } c\leq \textup{Re}\, s\leq M.$
 \end{itemize}
 then $f(t):=\frac{1}{2\pi i}\int_{c-i\infty}^{c+i\infty}t^{-s}h(s)ds$ and it has the following asymptotic expansion at $\infty$,
 $$f(t)=-\sum\limits_{j=0}^{M-1}\sum\limits_{k=0}^{m_j}a_{jk}t^{p_j}\ln^{k} t+O(t^{-M}).$$
\end{enumerate}
\end{prop}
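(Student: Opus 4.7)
The plan is to apply the Mellin inversion formula and then shift the contour of integration across the poles of $h(s)$, picking up residues that will produce the asymptotic expansion terms, and using the decay estimate (\ref{Integral.Estimate}) to control both the horizontal boundary contributions and the remaining vertical line integral.

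First I would fix $c\in\la a,b\ra$ as in the hypothesis and start from the Mellin inversion identity of Proposition \ref{Inverse.Mellin.formula}:
\begin{equation*}
f(t)=\frac{1}{2\pi i}\int_{c-i\infty}^{c+i\infty}t^{-s}h(s)\,ds.
\end{equation*}
For part (1), I would consider the rectangular contour $R_T$ with vertices $c\pm iT$ and $-M\pm iT$. Since $h(s)$ is meromorphic on the closed strip $-M\le \textup{Re}\, s\le c$, analytic on $\textup{Re}\, s=-M$, and has only finitely many poles inside, Cauchy's residue theorem gives $\frac{1}{2\pi i}\oint_{R_T}t^{-s}h(s)\,ds=\sum_{-M<-\textup{Re}\, p_j} \textup{Res}_{s=-p_j}\bigl(t^{-s}h(s)\bigr)$ for $T$ large enough. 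The decay estimate (\ref{Integral.Estimate}), which is assumed uniform across the strip, forces the two horizontal integrals at $\textup{Im}\, s=\pm T$ to vanish as $T\to\infty$ (since $|t^{-s}|$ stays bounded by $\max(t^{-M},t^{-c})$ while $|h(s)|=O(T^{-r})$ and the horizontal length is $c+M$, finite).

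Next I would compute the residues explicitly. Expanding $t^{-s}=t^{p_j}\sum_{n\ge 0}\frac{(-\ln t)^n}{n!}(s+p_j)^n$ around $s=-p_j$ and multiplying by the singular part $\frac{(-1)^k k!}{(s+p_j)^{k+1}}$ prescribed in (\ref{Singular.Expansion}), the coefficient of $(s+p_j)^{-1}$ is exactly $t^{p_j}\ln^k t$, so
\begin{equation*}
\textup{Res}_{s=-p_j}\Bigl(\frac{(-1)^k k!}{(s+p_j)^{k+1}}t^{-s}\Bigr)=t^{p_j}\ln^k t.
\end{equation*}
Summing the contributions weighted by $a_{jk}$ yields the desired expansion terms $\sum_{j=0}^{M-1}\sum_{k=0}^{m_j}a_{jk}t^{p_j}\ln^k t$. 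The leftover vertical integral along $\textup{Re}\, s=-M$ is then the error: on that line $|t^{-s}|=t^{M}$, and by (\ref{Integral.Estimate}) the integrand is dominated by $t^{M}\la s\ra^{-r}$ with $r>1$, which is integrable in $\eta$; hence the remainder is $O(t^M)$.

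Part (2) proceeds by the mirror argument, shifting the contour to the right from $\textup{Re}\, s=c$ to $\textup{Re}\, s=M$. The sign flip in (\ref{Singular.Expansion}) is absorbed by the opposite orientation of the enclosed poles (clockwise instead of counterclockwise), so the residues still contribute the expansion terms with the sign shown in the statement, while $|t^{-s}|=t^{-M}$ on the new line gives the error $O(t^{-M})$ via the same integrability argument. The main obstacle I expect is the careful bookkeeping in the residue calculation for higher-order poles $(k\ge 1)$ to make sure the logarithmic factors $\ln^k t$ come out with the correct sign and combinatorial constant, together with a rigorous justification that the horizontal boundary integrals vanish uniformly in $t$ on a bounded range (which forces one to use that $r>1$ rather than merely $r>0$ in (\ref{Integral.Estimate})).
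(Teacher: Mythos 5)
Your contour-shift-and-residues argument is correct and is exactly the standard proof of this result; the paper itself gives no proof, simply recalling the theorem from \cite{FGD}, where the argument is the one you describe (Mellin inversion, move the line of integration from $\textup{Re}\, s=c$ to $\textup{Re}\, s=\mp M$, collect the residues $\textup{Res}_{s=-p_j}\bigl(t^{-s}h(s)\bigr)=t^{p_j}\ln^k t$ from the singular parts, and bound the leftover vertical integral by $t^{\pm M}\int\la s\ra^{-r}\,d\eta$). The only small imprecision is your closing remark: the hypothesis $r>1$ is what makes the remainder integral along $\textup{Re}\, s=\mp M$ absolutely convergent and hence $O(t^{\pm M})$, whereas the vanishing of the horizontal segments (of fixed finite length) already follows from any uniform decay $h(s)\to 0$ as $|\textup{Im}\, s|\to\infty$ in the closed strip.
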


{Let us now}  characterize the image under the Mellin transform of the spaces $\mathcal{S}_{\PP}(\R_+)$ (see, \ref{Def.Conormal.Sing.0}).

\begin{thm}\label{P-W.Mellin.Asymp.Log-poly}
Let $a\in\C$. An analytic function $h(s)$ on the strip $\la a,\infty\ra$ is a Mellin transform of a function $f$ in $\mathcal{S}_{\PP}(\R_+)$ with asymptotic expansion for $t\rightarrow 0$ of the form
\begin{equation}\label{Eq:asymp-function}
 f(t)=\sum\limits_{j=0}^{M-1}\sum\limits_{k=0}^{m_j}a_{jk} t^{p_j}\ln^{k} t+O(t^M),
\end{equation}
where  $-M<-\textup{Re}\, p_j\leq a,$   if and only if

\begin{itemize}
\item[(i)]
$h(s)$ is defined for $\textup{Re}\, s>-\textup{min}_{j\in\N}\, \textup{Re}\, p_j$ {and} has  {a} meromorphic continuation in the strip $\la  -M,\infty\ra$ with singular expansion
 \begin{equation}\label{Eq.Cor.}
 h(s)\cong \sum\limits_{j=0}^{M-1}\sum\limits_{k=0}^{m_j}a_{jk}\frac{(-1)^{k}k!}{(s+p_j)^{k+1}}
 \end{equation}
 for $s\in \la M,a\ra$, and it is analytic on $\textup{Re}\,(s)=-M$,

\item[(ii)] there exists a real number $c\in\la a,\infty\ra$  such that for some $r>1$,
$$h(s)=O(|s|^{-r}),$$ when $|s|\rightarrow \infty$ in $-M\leq \textup{Re}\, s\leq c.$
\end{itemize}

\end{thm}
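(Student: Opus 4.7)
The proof splits naturally into the two implications, each reduced to the corresponding Mapping Theorem (Propositions \ref{Direct.Thm} and \ref{Reverse.Thm}) together with extra work to handle the behavior of $f$ (or $\mathcal{M}[f]$) away from the origin.

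For the forward direction, let $f\in\mathcal{S}_\PP(\R_+)$. Since $f$ is rapidly decreasing at infinity and $\textup{Re}\, p_j\to\infty$, the fundamental strip of $\mathcal{M}[f]$ is exactly $\la -\textup{Re}\, p_0,\infty\ra$. Fix a cut-off function $\omega$ at $0$ and decompose $f=\omega f+(1-\omega)f$. The second piece lies in $\mathcal{S}(\R)$ with support bounded away from $0$, so its Mellin transform is entire and, by iterating property (v) of Proposition \ref{Properties.Mellin}, decays faster than any polynomial along vertical lines. For $\omega f$, given any $M$, use the truncated expansion
$$\omega(t)f(t)=\sum_{j=0}^{M-1}\sum_{k=0}^{m_j}a_{jk}\,\omega(t)t^{p_j}\ln^{k}t + \omega(t)R_M(t),$$
with $R_M(t)=O(t^{\textup{Re}\, p_M})$. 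Lemma \ref{Lemma.Mellin.cut-off} gives $\mathcal{M}[\omega(t)t^{p_j}\ln^k t](s)=\frac{(-1)^k k!}{(s+p_j)^{k+1}}+h_{jk}(s)$ with $h_{jk}$ entire and rapidly decreasing on vertical strips, while the remainder contributes a function analytic in $\la-M,\infty\ra$. Letting $M$ vary yields the meromorphic continuation together with the singular expansion of (i); this is essentially the content of Proposition \ref{Direct.Thm}. Condition (ii) then follows by combining the rapid vertical decay of the entire parts with analyticity outside a finite set of poles.

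For the converse direction, given $h$ satisfying (i) and (ii), define
$$f(t)=\frac{1}{2\pi i}\int_{c-i\infty}^{c+i\infty}t^{-s}h(s)\,ds,$$
which converges absolutely by the estimate $h(s)=O(|s|^{-r})$ with $r>1$. Proposition \ref{Reverse.Thm} applied with the singular expansion (\ref{Eq.Cor.}) supplies the asymptotic expansion (\ref{Eq:asymp-function}) at $t\to 0^+$. Smoothness of $f$ on $\R_+$ follows by differentiation under the integral sign, justified again by the polynomial bound in (ii). To prove that $f$ is rapidly decreasing at $\infty$, I would shift the contour of integration from $\Gamma_c$ to $\Gamma_{c'}$ for arbitrary $c'>c$: since $h$ is analytic in $\la a,\infty\ra$ and decays as $|s|^{-r}$ on horizontal segments at infinity, Cauchy's theorem yields
$$f(t)=\frac{1}{2\pi i}\int_{c'-i\infty}^{c'+i\infty}t^{-s}h(s)\,ds = O(t^{-c'}),$$
and since $c'$ may be chosen arbitrarily large, $f\in\mathcal{S}_\PP(\R_+)$.

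The main obstacle is matching the precise class of estimates needed in each direction. In the forward direction one actually obtains rapid (not merely $O(|s|^{-r})$) decay along vertical strips, so (ii) is weaker than the optimal necessary condition; making this explicit requires keeping careful track of which integration-by-parts identity (property (iii) or (v) in Proposition \ref{Properties.Mellin}) is applied to which piece of the decomposition. In the converse direction, the contour-shifting argument must be justified with decay of $h$ uniform on horizontal segments $\{\textup{Im}\, s=\pm T\,:\,c\leq\textup{Re}\, s\leq c'\}$ as $T\to\infty$; strictly, one needs to know that the estimate $h(s)=O(|s|^{-r})$ extends to such larger strips, which one obtains from the hypothesis applied with arbitrary $c$. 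Finally, care is required when some $p_j$ is a negative integer or when two $p_j$'s differ by an integer, since then the singular expansion (\ref{Eq.Cor.}) must be read as a sum of Laurent heads in the sense of (\ref{Eq:exam.sing.expans}) rather than as a pole decomposition.
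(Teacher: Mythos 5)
Your proposal follows essentially the same route as the paper: decompose $f=\omega f+(1-\omega)f$ with a cut-off, absorb $(1-\omega)f$ into an entire, rapidly decaying Mellin transform, and reduce the two implications to the Direct and Converse Mapping Theorems (Propositions \ref{Direct.Thm} and \ref{Reverse.Thm}), with the converse built from the inverse Mellin integral $f(t)=\frac{1}{2\pi i}\int_{\Gamma_c}t^{-s}h(s)\,ds$. The one place you genuinely diverge is the proof that this $f$ is rapidly decreasing: you shift the contour to $\Gamma_{c'}$ with $c'$ arbitrarily large to get $f(t)=O(t^{-c'})$, whereas the paper estimates $|t^{l}f^{(k)}(t)|$ directly on the fixed line $\Gamma_c$; your version is the cleaner of the two, since the paper's bound as written only yields $O(t^{l-c})$ and does not by itself give decay of all orders. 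Note, however, that both arguments lean on $h$ decaying on arbitrarily wide strips $-M\leq\textup{Re}\,s\leq c'$, while condition (ii) as stated is only existential in $c$ (and gives only $O(|s|^{-r})$, $r>1$, which also does not literally justify differentiating under the integral to all orders); you flag this, but resolving it requires either strengthening (ii) or supplying the missing decay from the analyticity of $h$ on $\la a,\infty\ra$, a step neither you nor the paper carries out.
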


\begin{proof}

Let $\PP=\{(p_j,m_j)\}_{j\in\N}$. Let $f\in\mathcal{S}_{\PP}(\R_+)$ with asymptotic expansion as before and a cut-off function $\omega$. Then
 $$f(t)=\omega(t)\Big( \sum\limits_{j=0}^{M-1}\sum\limits_{k=0}^{m_j}a_{jk} t^{p_j}\ln^{k} t+O(t^M)   \Big)+
 (1-\omega(t))f(t).$$ 
Notice that the  {result} is independent of the choice of the cut-off function $\omega$ from the fact that $\omega-\omega'$ has compact support for any other cut-off function $\omega'$, and the Mellin transform of a function with compact support is an analytic function.  We know from Proposition \ref{Smooth.P-W.Mellin} that the Mellin transform of $ (1-\omega(t))f(t)$ is an analytic function defined in the whole $\C$. From Proposition \ref{Direct.Thm}, $h(s)=\mathcal{M}[f](s)$  admits a meromorphic continuation in the strip $\la  -M,\infty\ra$ with singular expansion (\ref{Eq.Cor.}). The estimate $h(s)=O(|s|^{-r})$ comes from Proposition \ref{Reverse.Thm}.
\\
 Let us now prove {the} converse statement. Let $h$ an meromorphic function satisfying {the conditions} of Theorem \ref{P-W.Mellin.Asymp.Log-poly}. Let $c\in\la a,b\ra$, and set
 $$f(t)=\int_{c-i\infty}^{c+i\infty}t^{-s}h(s)ds,$$
 it follows from Theorem \ref{Reverse.Thm} that $f(t)$ satisfies (\ref{Eq:asymp-function}). 
It now remains to show that $f$ is a  rapidly decreasing  smooth function.
 The smooth condition follows from Proposition \ref{Mellin.Dist.} and the equality
$$\frac{d^k}{dt^k}f(t)=\frac{1}{2\pi i}\int_{c-i\infty}^{c+i\infty}(s-1)\cdots (s-k)t^{-s}h(s)ds.$$
The rapidly decreasing condition follows from the fact that, for every $l,k\in\N$, $$|t^l\frac{d^k}{dt^k}f(t)|\leq C_m\int_{c-i\infty}^{c+i\infty}|t^{-s+l}\la s \ra^{k}h(s)|ds \leq C_m\int_{c-i\infty}^{c+i\infty}t^{l}\la s\ra^{k-m}ds<\infty,$$
for $m$ chosen large enough.
\end{proof}

We finally come to the proof of the Proposition \ref{Smooth.P-W.Mellin}.

\begin{proof}[Proof of Proposition \ref{Smooth.P-W.Mellin}.]
Let $u\in\mathcal{S}(\overline{\R}_+)$. We have that for any integer $M>0$, the following asymptotic expansion
$$u(t)=\sum\limits_{j=0}^{M-1}a_jt^{j}+O(t^M).$$
It follows from Theorem \ref{P-W.Mellin.Asymp.Log-poly}, $\mathcal{M}[u](s)$ is an analytic function on $\textup{Re}\, s>0$ and has a meromorphic continuation in $\C$ with singular expansion   
\begin{equation}
h(s)\cong \sum\limits_{j=0}^{M-1}a_{j}\frac{1}{s+p_j}
\end{equation}
for $s\in \la-\infty,0\ra$.\\
Now, for $s=\xi+i\eta$ and fixed $\xi$, for any $\alpha, \beta\in \N$ and any excision function $\chi$ for the set $\{j\}_{j\in\N}$, since (cf. \ref{Fourier.Mellin.Rel}),  there exists a positive constant $C_{\alpha,\beta}$ such that 
\begin{eqnarray*}
\sup_{\eta\in \Gamma_{\eta}}|\eta^\alpha \partial^\beta_{\eta}\chi\mathcal{M}[u](s)|=\sup_{\eta\in \Gamma_{\eta}}|\eta^\alpha \partial^\beta_{\eta}\chi\mathcal{F}[e^{-\xi t}u(e^{-t})](\eta)|\leq C_{\alpha,\beta}, 
\end{eqnarray*}
the last inequality follows from well-known properties of the Fourier transform and the fact of $u$ lies in $\mathcal{S}(\overline{\R}_+)$. Therefore, $\chi(s)\mathcal{M}[u](s)\in\mathcal{S}(\Gamma_{\eta})$.
\end{proof}

\subsection{A Paley-Wiener theorem for extendable distributions with positive support}\label{Subsec-extend-distr}

In this section we follow \cite{P79} in order to recall the main properties of the Mellin transform acting on distributions we are interested in.
\\ \\
{\bf The space $\mathcal{E}_+^\prime(\R_+)$.} Let $\mathcal{E}'_+(\R_+)$ denote the subset of extendable distributions in $\mathcal{D}'(\R_+)$ with compact support in $\R$, i.e. $U\in \mathcal{D}'_+(\R_+)$ if and only if there exists $V\in\mathcal{D}'(\R)$ with compact support such that $r_+V=U$, here $r_+$ denotes the restriction map to $\R_+$.  The space $\mathcal{E}'_+(\R_+)$ may be equipped with the following algebraic structure: Let $U_i,\ i=1,2,$ be distribution in  $\mathcal{E}'(\R_+)$. The convolution product $U_1\ast U_2$ of $U_1$ and $U_2$ given by
$$\la U_1\ast U_2,\phi\ra:=\la U_1(s)\otimes U_2(t),\phi(st)\ra,\ \forall \phi\in\mathcal{D}'(\R_+).$$

Let $\Omega$ be an open subset of $\R$.  The structure theorem  for distributions shows that for any $U\in\mathcal{D}'(\Omega)$ and any compact set $K\subset \Omega$, there exists
 a continuous function $F\in C^0(K)$ and a multi index $\alpha$, such that $U=D^\alpha F$ on $K$. In this case,  $\alpha$ is called the order of $U$ (see, e.g. \cite{H}).
 
\begin{prop}{\em (\cite{Schwartz68}, Theorem 24)}
 For  {any distribution $V$ with compact support
 in $\R$ which extends some $U\in \mathcal{E}_+^\prime(\R_+)$,} there exists $m\in\N$, such that $V$ is a distribution of order $\leq m$ and  the function $$s\mapsto \mathcal{M}[U](s)=\la V,r_+ t^{s-1} \ra$$
 is well defined as holomorphic function for $\textup{Re} \, (s)>m+1$ and independent of the choice of {extension} $V$ of $U$. The function  $\mathcal{M}[U](s)$ is called the Mellin transform of $U$.
\end{prop}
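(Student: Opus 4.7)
My plan splits the argument into three parts — finite order of $V$, definition of the Mellin pairing via a cut-off, and joint verification of holomorphy and extension-independence — with the sharp threshold $\textup{Re}\, s>m+1$ emerging naturally in each. First, since $V\in\mathcal{E}'(\R)$ has compact support, the structure theorem (see \eqref{Eq:2.1.2}) yields an integer $m\in\N$ such that $V$ is of order $\le m$; equivalently, on a neighborhood of $\textup{supp}\, V$ one may write $V=\sum_{k=0}^m \partial_t^k F_k$ with continuous functions $F_k$ of compact support.

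Second, to give meaning to $\la V, r_+ t^{s-1}\ra$, I would fix a cut-off $\phi\in C_c^\infty(\R)$ equal to $1$ on a neighborhood of $\textup{supp}\, V$ and set
\[
 \mathcal{M}[U](s) := \la V,\, \phi(t)\, r_+ t^{s-1}\ra,
\]
where $\phi(t)\, r_+ t^{s-1}$ denotes the function $\phi(t)\,t^{s-1}$ for $t>0$ and $0$ for $t\le 0$. Its $k$-th right-derivative at $0^+$ is $(s-1)(s-2)\cdots(s-k)\, t^{s-1-k}|_{t=0^+}$, which vanishes exactly when $\textup{Re}\, s > k+1$. Hence $\phi\, r_+ t^{s-1}\in C_c^m(\R)$ as soon as $\textup{Re}\, s > m+1$, and the pairing against an order-$m$ distribution is well defined in this half-plane; compactness of $\textup{supp}\, V$ makes the value independent of the particular cut-off $\phi$.

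Third, holomorphy of $s\mapsto\mathcal{M}[U](s)$ follows from the fact that $s\mapsto \phi(t)\, r_+ t^{s-1}$ is a holomorphic map $\{\textup{Re}\, s > m+1\}\to C_c^m(\R)$ with $s$-derivative $\phi(t)\, r_+(t^{s-1}\ln t)$; continuity of $V$ on $C_c^m(\R)$ then gives $\partial_s \mathcal{M}[U](s) = \la V,\, \phi(t)\, r_+(t^{s-1}\ln t)\ra$, well-defined under the same condition on $s$. For the independence of the extension, given two compactly supported extensions $V_1,V_2$ of $U$, the difference $W=V_1-V_2$ vanishes on $\R_+$, so $\textup{supp}\, W\subset(-\infty,0]$. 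Since $\phi\, r_+ t^{s-1}$ is identically zero on $(-\infty,0)$ and has all derivatives up to order $m$ equal to zero at $0$, the sequence $\eta_\varepsilon(t)\,\phi(t)\, r_+ t^{s-1}$ — with $\eta_\varepsilon$ a cut-off vanishing on $(-\infty,\varepsilon]$ and equal to $1$ on $[2\varepsilon,\infty)$ — has support disjoint from $\textup{supp}\, W$ and converges to $\phi\, r_+ t^{s-1}$ in $C^m(\R)$ as $\varepsilon\to 0^+$; hence $\la W,\eta_\varepsilon\phi\, r_+ t^{s-1}\ra=0$ for every $\varepsilon>0$ and continuity of $W$ on $C_c^m(\R)$ forces $\la W,\phi\, r_+ t^{s-1}\ra=0$.

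The main technical point is precisely the $C^m$-convergence $\eta_\varepsilon\phi\, r_+ t^{s-1}\to \phi\, r_+ t^{s-1}$: the $k$-th derivative of $(\eta_\varepsilon-1)\phi\, r_+ t^{s-1}$ on $[0,2\varepsilon]$ contains, by Leibniz, terms of size $\varepsilon^{-j}\, t^{\textup{Re}\, s-1-(k-j)}$, which are uniformly bounded by $\varepsilon^{\textup{Re}\, s-1-k}$ and tend to zero as $\varepsilon\to 0^+$ precisely when $\textup{Re}\, s > m+1$. This simultaneously vindicates the approximation and explains why $m+1$ is the sharp threshold appearing in the statement.
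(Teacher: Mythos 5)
The paper offers no proof of this proposition at all: it is quoted verbatim as Theorem~24 of Schwartz's \emph{Th\'eorie des distributions}, so there is nothing internal to compare against. Your argument is a correct, self-contained substitute, and it identifies the right mechanism for the threshold $\textup{Re}\,s>m+1$: the extension by zero of $t^{s-1}$ is of class $C^{m}$ across $t=0$ exactly in that half-plane, a compactly supported distribution of order $\le m$ pairs continuously with $C^{m}_c$ functions, and holomorphy passes through the pairing because $s\mapsto\phi\,r_+t^{s-1}$ is holomorphic into $C^m_c(\R)$. The most delicate step, extension-independence, is exactly where a naive argument fails (the test function vanishes on $\supp W\subset(-\infty,0]$ but not on a neighborhood of it), and you correctly resolve it by reproving the standard fact (H\"ormander, Theorem~2.3.3) that a finite-order distribution annihilates $C^m$ functions vanishing to order $m$ on its support; your $\varepsilon^{-j}\cdot t^{\textup{Re}\,s-1-(k-j)}\le C\varepsilon^{\textup{Re}\,s-1-k}$ estimate for the Leibniz terms is the right computation and shows again why $m+1$ is sharp. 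Two small remarks. First, your parenthetical ``equivalently, $V=\sum_{k=0}^m\partial_t^kF_k$ with $F_k$ continuous'' overstates the structure theorem: already $\delta_0$, of order $0$, requires two derivatives of a continuous function; fortunately you never use this representation, only the order-$\le m$ seminorm estimate. Second, when comparing two extensions $V_1,V_2$ you should take $m$ to dominate the orders of both before invoking continuity of $W=V_1-V_2$ on $C^m_c$, and then extend the agreement to the full common half-plane by uniqueness of analytic continuation; this is implicit in your write-up and easily made explicit.
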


{\bf The space $\mathcal{H}'_+$.}  {Let  $\mathcal{H}'_+$ be} the space of holomorphic functions $h$ defined on  {half}-planes $\textup{Re}\, s>r \ (r\in\R)$, identifying two {such functions if they coincide in some half-}plane, and satisfy
\begin{equation}\label{Bounded.Mellin}
 |h(s)|\leq C\la s\ra^m e^{a\, \textup{Re}\, s},\text{ for } \textup{Re}\, s >r,
\end{equation}
 for some constants $C>0,\ m\in\Z,a>0$ and $r\in\R$, which can depend on $h$.
 The space $\mathcal{H}'_+$ may be equipped with the following algebraic structure:
 Let $h_i,\ i=1,2,$ be functions in $\mathcal{H}'_+$. The product $h_1h_2$ of $h_1$ and $h_2$ is given by $$s\mapsto h_1(s)h_2(s).$$

\begin{lemma}
 The Mellin transform $\mathcal{M}[U](s)$ of  any $U\in\mathcal{E}'_+(\R_+)$ lies in $\mathcal{H}'_+$.
\end{lemma}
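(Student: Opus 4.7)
The previous proposition already supplies the holomorphicity of $s\mapsto \mathcal{M}[U](s)=\langle V,r_+t^{s-1}\rangle$ on a half-plane $\{\textup{Re}\, s>m+1\}$ (for any extension $V\in\mathcal{E}'(\R)$ of $U$, of some order $m$), as well as the independence of this function on the choice of extension. So the only thing left is to establish the bound (\ref{Bounded.Mellin}).

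The plan is as follows. First, I would fix an extension $V\in\mathcal{E}'(\R)$ of $U$. Since $V$ has compact support and $U$ has positive support, I may choose $V$ so that $\textup{supp}\, V\subset[0,e^a]$ for some $a>0$, and $V$ is of order $\leq m$ for some $m\in\N$. Next, I would pick a cut-off function $\chi\in C_c^\infty(\R)$ with $\chi\equiv 1$ on a neighborhood of $\textup{supp}\, V$ and $\textup{supp}\,\chi\subset[-1,e^a+1]$; then for $\textup{Re}\, s>m+1$ the function $\phi_s(t):=\chi(t)\,t^{s-1}$ lies in $C^m(\R)$ (the derivatives up to order $m$ at $t=0$ are continuous precisely because $\textup{Re}\,(s-1-k)\geq 0$ for $k\leq m$), and agrees with $r_+t^{s-1}$ on a neighborhood of $\textup{supp}\, V$.

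Because $V$ is of order $\leq m$ and compactly supported, the continuity estimate (\ref{Eq:2.1.2}) gives a constant $C>0$ (depending only on $V$ and $\chi$) such that
$$|\mathcal{M}[U](s)|=|\langle V,\phi_s\rangle|\leq C\sum_{k\leq m}\sup_{t\in\textup{supp}\,V}\bigl|\partial_t^k\phi_s(t)\bigr|.$$
On $\textup{supp}\, V$ we have $\chi\equiv 1$, so $\partial_t^k\phi_s(t)=(s-1)(s-2)\cdots(s-k)\,t^{s-1-k}$, and for $0\leq t\leq e^a$ with $\textup{Re}\, s-1-k\geq 0$ we can bound $t^{\textup{Re}\, s-1-k}\leq e^{a(\textup{Re}\, s-1-k)}\leq e^{a\,\textup{Re}\, s}$. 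The polynomial factor $|(s-1)\cdots(s-k)|$ is controlled by $\langle s\rangle^k\leq \langle s\rangle^m$ for $k\leq m$ (absorbing a combinatorial constant). Summing over $k\leq m$ yields
$$|\mathcal{M}[U](s)|\leq C'\,\langle s\rangle^m e^{a\,\textup{Re}\, s},\qquad \textup{Re}\, s>m+1,$$
which is exactly (\ref{Bounded.Mellin}) with $r=m+1$.

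I expect the only delicate point to be the verification that the pairing $\langle V,\chi\,t^{s-1}\rangle$ makes sense and coincides with $\langle V,r_+t^{s-1}\rangle$, namely that a distribution of order $\leq m$ acts on $C^m$ test functions and that the pairing is insensitive to behavior of $\chi$ outside $\textup{supp}\, V$; this is standard once the order constraint $\textup{Re}\, s>m+1$ guarantees $C^m$-regularity of $t^{s-1}$ up to $0$. The rest is routine estimation.
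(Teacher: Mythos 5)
Your argument is correct and matches the paper's in substance: both exploit the finite order $m$ of a compactly supported extension to move $m$ derivatives onto $t^{s-1}$, producing the polynomial factor $(s-1)(s-2)\cdots(s-m)$ and the bound $C\la s\ra^m e^{a\,\textup{Re}\, s}$ on a half-plane $\textup{Re}\, s>r$ with $r$ comparable to $m$. The only cosmetic difference is that the paper invokes the structure theorem to write $U=D^mf$ with $f$ continuous and supported in $(0,a)$ and then estimates the explicit integral $(-1)^m(s-1)\cdots(s-m)\int_0^a t^{s-m-1}f(t)\,dt$, whereas you apply the order-$\leq m$ seminorm estimate directly to the test function $\chi(t)\,t^{s-1}$ --- these are dual formulations of the same computation.
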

\begin{proof}
 For any $U\in\mathcal{E}_+^\prime(\R_+)$ with support in $(0,a)$, there exists $f\in C^0(\R_+)$ and $m\in\N$
 such that $\textup{supp}\,  f\subset (0,a)$ and $U=D^mf.$ Then, applying properties of the Mellin transform we obtain
 $$ \mathcal{M}[U](s)=(-1)^m (s-1)(s-2)\cdots (s-m)\int_0^a t^{s-m-1}f(t)dt,$$
 and therefore $\mathcal{M}[U](s)$ therefore satisfies the estimate (\ref{Bounded.Mellin}) for $r>m.$
\end{proof}

 For $h\in\mathcal{H}'_+$ satisfying the estimate (\ref{Bounded.Mellin}) and $c>r$, we define
 $$\mathcal{R}[h](t):=\dlim_{A\rightarrow \infty} \frac{1}{2\pi i}\int_{c-iA}^{c+iA}t^{-s}h(s)ds.$$
 As it is described in the next Theorem, the map $\mathcal{R}$ defines
 a linear map from $\mathcal{H}'_+$ to $\mathcal{E}'_+(\R_+)$,
 and the distribution $\mathcal{R}[h](t)$ is called \textit{the inverse Mellin transform} of $h$.
 
\begin{prop}{\em(\cite{P79}, Theorem 2.3.)}
 The Mellin transform $\mathcal{M}$ is an algebra isomorphism from $\mathcal{E}'_+(\R_+)$ to $\mathcal{H}'_+$,
 with inverse $\mathcal{R}$. Moreover, if $U\in\mathcal{E}'_+(\R_+)$ and $h(s)=\mathcal{M}[U](s)\in \mathcal{H}'_+$, the support
 of $U$ is contained in $(0,c]$ if and only if $h$ verifies $(\ref{Bounded.Mellin})$ with $a=c$.
\end{prop}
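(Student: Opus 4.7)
The plan is to prove the three assertions in order: that $\mathcal{M}$ is an algebra morphism, that it is bijective with inverse $\mathcal{R}$, and that the support condition translates precisely into the estimate (\ref{Bounded.Mellin}). The overall strategy is to leverage the substitution $t = e^{-x}$, which intertwines the Mellin transform with the Fourier--Laplace transform and reduces the proposition to the classical Paley--Wiener theorem for compactly supported distributions.

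For the algebra morphism, given $U_1, U_2 \in \mathcal{E}'_+(\R_+)$ with compactly supported extensions $V_1, V_2$, I would choose a cut-off $\psi \in C_c^\infty(\R_+)$ equal to $1$ near $\textup{supp}(V_1 \ast V_2)$ and evaluate
\[ \mathcal{M}[U_1 \ast U_2](z) = \langle U_1(s) \otimes U_2(t),\, \psi(st)(st)^{z-1}\rangle. \]
Since $(st)^{z-1} = s^{z-1} t^{z-1}$ factorizes and the tensor product pairing splits, this equals $\mathcal{M}[U_1](z)\mathcal{M}[U_2](z)$.

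For the bijection, the key step is to show that $\mathcal{R}$ maps $\mathcal{H}'_+$ into $\mathcal{E}'_+(\R_+)$. Given $h$ satisfying $|h(s)| \leq C\la s\ra^m e^{a\,\textup{Re}\, s}$ on $\textup{Re}\, s > r$, I would choose $M > m+1$ so that $h(s)/s^M$ is absolutely integrable along vertical lines and define the continuous function
\[ g(t) := \frac{1}{2\pi i}\int_{c-i\infty}^{c+i\infty} t^{-s}\,\frac{h(s)}{s^M}\, ds. \]
Property (v) of Proposition \ref{Properties.Mellin} then gives $\mathcal{M}[(-t\partial_t)^M g] = s^M\cdot h(s)/s^M = h$, so $\mathcal{R}[h] := (-t\partial_t)^M g$ is a well-defined distribution with $\mathcal{M}\circ\mathcal{R} = \textup{id}$. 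The reverse identity $\mathcal{R}\circ\mathcal{M} = \textup{id}$ on $\mathcal{E}'_+(\R_+)$ follows by writing $V = D^m f$ from the structure theorem, computing $\mathcal{M}[U](s) = (-1)^m(s-1)\cdots(s-m)\int_0^\infty t^{s-m-1}f(t)\,dt$, and then applying Proposition \ref{Inverse.Mellin.formula} to recover $D^m f|_{\R_+}$.

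The support characterization, which is the main technical point, closes the proof. For the necessary direction, if $\textup{supp}(U)\subset (0,c]$ one may arrange $V = D^m f$ with $\textup{supp}(f) \subset (0,c]$, and the explicit formula for $\mathcal{M}[U](s)$ yields (\ref{Bounded.Mellin}) directly after bounding $\int_0^c t^{\textup{Re}\, s - m - 1}\,dt$. For the sufficient direction, I would shift the Bromwich contour in $g(t)$ to the right to $\textup{Re}\, s = c'$: holomorphicity of $h$ on $\textup{Re}\, s > r$ combined with the estimate yields $|g(t)| \leq C'' (e^a/t)^{c'}$ (the polynomial factors being absorbed by $\la s\ra^{m-M}$); for $t$ beyond the claimed support bound, letting $c'\to\infty$ forces $g(t) = 0$, so $\mathcal{R}[h]= (-t\partial_t)^M g$ has the required support. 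The main obstacle is reconciling the normalization between the exponential constant $a$ and the geometric bound on the support, which is most transparent via the change of variables $t=e^{-x}$ that turns the statement into the standard Paley--Wiener--Schwartz theorem for compactly supported distributions.
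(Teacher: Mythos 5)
The paper offers no proof of this proposition at all: it is quoted verbatim from [P79, Th\'eor\`eme 2.3]. So there is no internal argument to compare yours against, and I can only assess the sketch on its own terms. Your overall route --- multiplicativity from the factorization $(st)^{z-1}=s^{z-1}t^{z-1}$, surjectivity via a Bromwich integral of $h$ divided by a polynomial, and the support bound by pushing the contour to the right --- is the standard one and is in substance the argument of the cited source. Two points, however, are genuine gaps rather than omitted routine detail.

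First, in the surjectivity step your candidate preimage is $(-t\partial_t)^M g$ with $g(t)=\frac{1}{2\pi i}\int_{\textup{Re}\, s=c}t^{-s}h(s)s^{-M}ds$ and $c>r$. This controls $g$ for large $t$ and shows $g=0$ for $t>e^a$, but near $t=0$ it only yields $g(t)=O(t^{-c})$ with $c>r$, which need not be locally integrable at the origin; you have therefore produced a distribution on $\R_+$ with support bounded above, but not shown it is \emph{extendable} to a compactly supported distribution on $\R$, which is precisely what membership in $\mathcal{E}'_+(\R_+)$ requires. Dividing by $s^M$ cannot help, because property (v) of Proposition \ref{Properties.Mellin} does not move the fundamental strip, hence does not move the admissible contour. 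The repair is to divide by $(-1)^M(s-1)\cdots(s-M)$ and use property (iii) instead: the resulting $f$ is recovered by a contour at $\textup{Re}\,\sigma=c''$ with $c''>r-M$, which for $M$ large may be taken negative, so $f$ is continuous on $[0,\infty)$, vanishes at $0$ and for $t>e^a$, and $V=D^M(e_+f)$ is the required compactly supported extension. Second, for the sharp direction of the support statement, the structure theorem does not let you "arrange" $\textup{supp}\,f\subset(0,c]$; it only places $\textup{supp}\,f$ in an $\epsilon$-neighbourhood, so your bound on $\int_0^{c+\epsilon}t^{\textup{Re}\,s-m-1}dt$ gives the estimate with exponent $a+\epsilon$ for every $\epsilon>0$ but not at the endpoint. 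The endpoint requires the usual Paley--Wiener device of pairing $V$ with $\chi_s(t)\,t^{s-1}$ for a cut-off $\chi_s$ equal to $1$ on a neighbourhood of the support of size comparable to $\la s\ra^{-1}$; this optimization is also where the polynomial factor $\la s\ra^m$ in (\ref{Bounded.Mellin}) genuinely comes from. Two smaller remarks: in the multiplicativity step $\psi(st)$ does not factor, so replace it by $\psi_1(s)\psi_2(t)$ with $\psi_i\equiv 1$ near $\textup{supp}\,V_i$, which agrees with your test function near $\textup{supp}(V_1\otimes V_2)$; and your own computation correctly produces the support bound $t\leq e^a$ from the factor $e^{a\,\textup{Re}\,s}$, consistently with Propositions \ref{Prop.L2} and \ref{Mellin.Dist.}, which shows that the clause "with $a=c$" in the statement must be read as "support contained in $(0,e^{c}]$" --- this deserves to be said explicitly rather than left as an unresolved normalization issue.
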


{For any  $h\in \mathcal{H}'_+$, the distribution $\mathcal{R}[h](t)$ is} called \textit{the inverse Mellin transform}\index{inverse Mellin transform} of $h$.
{The next Theorem which concerns Mellin transforms} of extendable compactly supported distributions, { will be later stated in the language of Fourier transforms of tempered distributions} with positive support in section \ref{SS:WP-thm.tempred.distributions}, Proposition \ref{WP-thm.tempred.distributions}.

\begin{prop}\label{Mellin.Dist.}{\em(\cite{P79}, Theorem.3.1.)}
The function $h\in \mathcal{H}'_+$ is the Mellin transform $h(s)=\mathcal{M}[U](s)$ of a distribution $U$ in $\mathcal{E}'_+(\R_+)$ with support $[e^{-a},e^a]$, $a>0$ if and only if
$h(s)$ is an {entire} function and  the following estimate {holds} for some $ m\in\N,$ $$|h(s)|\leq \la s\ra^m e^{a |\textup{Re}\, s|},\ s\in\C.$$
Moreover,  $h\in \mathcal{H}'_+$ is the Mellin transform $h(s)=\mathcal{M}[U](s)$ of a $C^\infty$-function $U$ with support $[e^{-a},e^a],\ a>0$ if and only if
$h$ is an {entire} function and for all $ m\in\N$ there exits $C_m>0$ such that $$|h(s)|\leq C_m\la s\ra^{-m} e^{a|\textup{Re}\, s|},\ s\in\C.$$
\end{prop}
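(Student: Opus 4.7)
The plan is to reduce both equivalences to the classical Paley-Wiener-Schwartz theorem on $\R$ via the substitution $t=e^{-x}$, which, as codified in (\ref{Fourier.Mellin.Rel}), intertwines the Mellin transform with the Fourier-Laplace transform. Under the diffeomorphism $\chi\colon\R\to\R_+$, $\chi(x)=e^{-x}$, the interval $[e^{-a},e^a]\subset\R_+$ corresponds to $[-a,a]\subset\R$, and pullback by $\chi$ identifies $U\in\mathcal{E}'_+(\R_+)$ with a compactly supported distribution $V:=\chi^\ast U\in\mathcal{E}'(\R)$ of the same order, with supports matched accordingly.

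For the distributional half, I would first extend (\ref{Fourier.Mellin.Rel}) to compactly supported distributions, obtaining
$$\mathcal{M}[U](s)=\la V,\, e^{-sx}\ra$$
for every $s\in\C$; since $V$ has compact support, the right-hand side is automatically an entire function of $s$, and this provides the entire extension of $\mathcal{M}[U]$. For the direction ``$\Rightarrow$'', I would apply the Paley-Wiener-Schwartz bound to $V$: writing $\zeta:=is$, so that $e^{-sx}=e^{i\zeta x}$ and $|\textup{Im}\,\zeta|=|\textup{Re}\,s|$, the classical estimate $|\la V, e^{i\zeta x}\ra|\leq C(1+|\zeta|)^m e^{a|\textup{Im}\,\zeta|}$ yields exactly the required bound on $h(s)=\mathcal{M}[U](s)$. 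For the converse ``$\Leftarrow$'', given an entire $h$ with the stated growth, I would define $V$ as the inverse Fourier transform of the restriction of $h$ to some vertical line $\textup{Re}\,s=\xi_0$; Cauchy's theorem applied to a tall rectangular contour (justified by the polynomial-times-exponential growth bound) shows $V$ is independent of the choice of $\xi_0$ and defines a tempered distribution, and the converse part of Paley-Wiener-Schwartz places its support in $[-a,a]$. Pushing $V$ forward by $\chi$ then produces the desired $U\in\mathcal{E}'_+(\R_+)$ with support in $[e^{-a},e^a]$ and $\mathcal{M}[U]=h$ by construction.

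The smooth case follows by the same dictionary applied to the Paley-Wiener theorem for test functions: smoothness of $U$ on $\R_+$ is equivalent, via the diffeomorphism $\chi$, to smoothness of $V$ on $\R$, and the rapid-decay condition $|h(s)|\leq C_m\la s\ra^{-m}e^{a|\textup{Re}\,s|}$ for all $m\in\N$ translates, under $\zeta=is$, into the classical Paley-Wiener characterization of the Fourier transform of a smooth function compactly supported in $[-a,a]$.

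The main obstacle is essentially bookkeeping: rigorously handling the distributional pairing $\la V,e^{-sx}\ra$ as an entire function, checking the match between $|\textup{Re}\,s|$ and $|\textup{Im}\,\zeta|$ under the substitution $\zeta=is$, and verifying that supports are transported correctly by $\chi$ (so that $\textup{supp}\,U\subset[e^{-a},e^a]$ if and only if $\textup{supp}\,V\subset[-a,a]$). Once this dictionary is in place, both characterizations are direct consequences of the classical Paley-Wiener-Schwartz theorem as recalled, for instance, in \cite{SB}, Theorem 11.1.1.
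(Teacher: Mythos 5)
The paper offers no proof of this proposition; it is quoted directly from \cite{P79}, Theorem 3.1, so there is no internal argument to compare against. Your reduction to the classical Paley--Wiener--Schwartz theorem via the substitution $t=e^{-x}$ --- i.e.\ via the relation (\ref{Fourier.Mellin.Rel}) already recorded in the paper, with the support dictionary $[e^{-a},e^a]\leftrightarrow[-a,a]$ and the identification $|\textup{Re}\,s|=|\textup{Im}\,\zeta|$ under $\zeta=is$ --- is the standard and correct route, and the smooth case does indeed follow from the same dictionary together with the Paley--Wiener theorem for $C^\infty_c$ functions (consistently with Proposition \ref{Prop.compac-mellin}). The only caveat is cosmetic: the paper's statement of the distributional bound omits the multiplicative constant $C$, which your version correctly restores.
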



\section{Fourier transform and Paley-Wiener theorems}

In this section we discuss Paley-Wiener type results for the Fourier transform, similar to  those  {derived in Section} \ref{S:MellinPaleyWiener} for the Mellin transform. We relate  the Fourier transform of functions with positive/ negative support and  log-polyhomogeneous asymptotic expansion at zero with analytical functions in $\mathcal{A}(\C_\mp)$ having an asymptotic expansion at infinity (cf. Theorem \ref{Type Paley-Wiener.thm's}).

\subsection{Homogeneous and log-homogeneous distributions and their the Fourier transform }\label{SS:HomoD}

In this section we summarize known results on homogeneous and  log-homogeneous distributions  (see also \cite{Eskin}, \cite{H}, section 3; \cite{GO}; \cite{FG}) and we comment along the way, additionally, we compute explicitly the Fourier (and the inverse) transform of the mentioned distributions, see Proposition \ref{Fourier.transform.homogeneous}, \ref{Inverse.Fourier.transform.homogeneous} and Corollary \ref{Fourier.transform.log-homogeneous}.\\

For $x \in \R$ and $a\in\C$, with $\textup{Re}\, a>-1$, let $x^a_\pm$ be a \textit{homogeneous tempered distributions} defined by the local integrable functions
\begin{equation*}
  x^a_+:= \begin{cases} x^a &\mbox{if } x>0 \\
  0 & \mbox{if } x\leq 0, \end{cases}
\;\; \text{ and } \;\;
  x^a_-:= \begin{cases} |x|^a &\mbox{if } x<0 \\
  0 & \mbox{if } x\geq 0. \end{cases}
\end{equation*}
For any $x \in \R$, we have
\begin{equation}\label{3.2.1.HI}
 x x_\pm^a=x_\pm^{a+1},\text{ if } \ \textup{Re}\, a >-1,
\end{equation}
 and, using the distributional extension of a derivative (Theorem  {\rm 3.1.3.}  in \cite{H}), we have
\begin{equation}\label{3.2.2.HI}
 \frac{d}{d x} x_\pm^a=a x_\pm^{a-1},\text{ if } \ \textup{Re}\, a>0.
\end{equation}
If $a=0$ then  the l.h.s. of (\ref{3.2.2.HI}) is the Heaviside function $H^\prime(x)=\delta_0$ and  the r.h.s. is  zero.\\

Two methods are presented in \cite{H} to extend   $x^a_\pm$  to all $a\in \C$  as a tempered distribution,  preserving when possible properties  (\ref{3.2.1.HI}) and (\ref{3.2.2.HI}). We illustrate one of them, for $x^a_+$, called Riesz's Method\index{Riesz's Method}.
 For fixed $\phi\in C^\infty_c(\R)$,  set
\begin{equation*}
 a\mapsto I_a(\phi):=\la  x_+^a,\phi \ra=\int_0^\infty x^a\phi(x)dx.
\end{equation*}
Integration by parts $k$ times yields
\begin{equation}\label{Eq.3.Hormander}
 I_a(\phi)=\frac{(-1)^k}{(a+1)\cdots (a+k)}I_{a+k}(\phi^{(k)}).
\end{equation}
The right-hand side is analytic for $\textup{Re}\, a>-k-1$ outside a set of simple poles $\{-1,-2,-3\cdots, -k\}$. Then, for $a\notin\Z_{<0},\ \textup{Re}\, a+k>-1$ we can define $x^a_\pm$ as the tempered distributions 
\begin{equation}\label{Eq:Homog.Distrib}
x^a_+=\frac{\partial^k_x x^{a+k}_+ }{(a+1)\cdots (a+k)}\;\;\; \text{ and }\;\;\;
 x^a_-=\frac{(-1)^k \partial^k_x x^{a+k}_- }{(a+1)\cdots (a+k)}.
\end{equation} 
 Moreover, the residue of $a\mapsto I_a(\phi)$ at $a=-k$, for $k\in\Z_{>0}$ is given by
 $$\lim_{a\rightarrow -k}(a+k)I_{a}(\phi)=\lim_{a\rightarrow -k}\frac{ (-1)^k I_a(\phi) }{ (a+1)\cdots (a+k-1) }=\phi^{(k-1)}(0)/(k-1)!,$$
 i.e. $\Res_{a=-k} x^{a}_\pm=\frac{(\mp 1)^{k-1}}{(k-1)!}\delta^{(k-1)},$  so subtracting the singular part, we obtain for $x^a_+$
 $$x^{-k}_+(\phi):=\lim_{\epsilon\rightarrow 0} I_a(\phi)-\frac{\phi^{(k-1)}(0)}{\epsilon(k-1)!}=-\int_0^\infty \frac{\ln x \phi^{(k)}}{(k-1)!}dx+\Big(\sum\limits_{j=1}^{k-1} \frac{1}{j}\Big)
 \frac{\phi^{(k-1)}(0)}{(k-1)!}.$$
\begin{remark}
 This extension is unique as a result of the uniqueness of the analytic continuation.
\end{remark}

\vspace{0.2cm}
 For $a\in\C\setminus\Z_{\leq  0}$, we have $$\frac{d}{dx}x^{a}=ax^{a-1}_+ \;\;  \text{ and }\;\;
 \la x^a_+,\phi\ra=t^a\la x^a_+,\phi_t\ra,$$ where $\phi_t(x)=\phi(tx).$ However, for $k\in\Z_{<0},$ we have
 $$\frac{d}{dx}x^{-k}_+=-kx^{-k-1}_+  + \frac{(-1)^k}{k!}\delta^{(k)}_0,\text{ and }$$
\begin{equation}\label{Homogenity.lost}
 \la x^{-k}_+,\phi\ra=t^{-k}\la x^{-k}_+, \phi\ra +\frac{(-1)^{k-1}}{(k-1)!} \la \delta^{(k-1)}_0,\phi\ra \ln t,
\end{equation}
so the homogeneity is partly lost.\\
\\
For $\textup{Re}\, a>-1$ and $k\in\N$ the functions $x\mapsto x^a_\pm \ln^k x,$ defined on $\R$ by

\begin{equation*}
  x^a_+\ln^k x:= \begin{cases} x^a\ln^k x &\mbox{if } x>0 \\
  0 & \mbox{if } x\leq 0,
\end{cases} \;\;\; \text{ and } \;\;  \;
  x^a_-\ln^k x:= \begin{cases} |x|^a\ln^k |x| &\mbox{if } x<0 \\
  0 & \mbox{if } x\geq  0,
\end{cases}
\end{equation*}
 are locally integrable,  and hence  they can be extended as  tempered distributions on $\R$ for any
 $a\in\C\setminus\Z_{<0}.$ We can build $x^a_+\ln^l x$ from differentiating $x^a_+$  with respect to $a$.
 For $\textup{Re}\, a>-k$ , the map $a\mapsto x^a_+$ is  analytic hence we have
\begin{equation}\label{Eq.log-derivatives}
 x^a\ln^l x =\partial_a^l x^a_+=\partial^l_a\Big( \frac{\partial^k_x x^{a+k}}{(a+k)\cdots (a+1)} \Big).
\end{equation}

\begin{remark}
 The extensions are unique by the uniqueness of the analytic continuation.
\end{remark} 

\begin{prop}\label{Thm.3.2.1.HI}({\rm \cite{H}, Theorem 3.1.11.})
 Let $I$ be an open interval on $\R$ and let
 $$Z=\{ z\in\C\ |\ Re(z)\in I,\ 0<\textup{Im}\, z <\gamma\}$$ be a one sided complex neighborhood. 
 For an analytic function $f$ in $Z$ such that for a non-negative integer $N,$ $$ |f(z)|\leq C(\textup{Im}\, z)^{-N},\ z\in Z,$$
 then $f(\cdot+iy )$ has a limit $f_0\in \mathcal{D}^{\prime, N+1} (I)$ as $y\rightarrow 0$, that is,
 $$\lim_{y\rightarrow 0^+}\int f(x+iy)\phi (x)dx=\la f_0,\phi \ra,\ \phi\in C^{N+1}_0(I).$$
\end{prop}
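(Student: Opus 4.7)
The plan is to construct an $(N+1)$-fold iterated holomorphic primitive $F$ of $f$ that extends continuously to $I$, and then to trade the boundary singularity of $f$ for derivatives of the test function via integration by parts in the real variable. Fix a compact subinterval $K\subset I$ and a base point $z_0\in Z$ with $\textup{Im}\, z_0=\gamma/2$. Setting $F_0:=f$, I would iteratively define, for $z$ in a slab $Z_K$ over a neighborhood of $K$,
$$F_k(z):=\int_{z_0}^{z} F_{k-1}(w)\,dw,\qquad k=1,\dots,N+1,$$
where the path of integration first runs horizontally from $z_0$ to $\textup{Re}(z)+i\gamma/2$ and then vertically down to $z$. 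Each $F_k$ is holomorphic with $F_k'=F_{k-1}$, so in particular $\partial_x^{N+1} F_{N+1}(x+iy)=f(x+iy)$ by the Cauchy--Riemann equations.

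The key estimates, obtained by induction on $k$ from the hypothesis $|f(z)|\leq C(\textup{Im}\, z)^{-N}$, are the uniform bounds $|F_k(x+iy)|\leq C_k y^{-(N-k)}$ for $0\leq k\leq N-1$, $|F_N(x+iy)|\leq C_N(1+|\log y|)$, and $|F_{N+1}(x+iy)|\leq C_{N+1}$, valid on $\{x+iy:x\in K,\ 0<y\leq\gamma/2\}$. In particular $F_{N+1}$ is bounded; combining this with the identity $\partial_y F_{N+1}=iF_N$ and the logarithmic bound on $F_N$ shows that $F_{N+1}(\cdot+iy)$ is uniformly Cauchy on $K$ as $y\to 0^+$, hence $F_{N+1}$ extends to a continuous bounded function $F_{N+1}(x)$ on $K$.

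For $\phi\in C_0^{N+1}(I)$ with support in $K$, integrating by parts $N+1$ times in $x$ (boundary terms vanish by compact support of $\phi$) gives
$$\int_{\R} f(x+iy)\,\phi(x)\,dx=(-1)^{N+1}\int_{\R} F_{N+1}(x+iy)\,\phi^{(N+1)}(x)\,dx,$$
and uniform convergence of $F_{N+1}(\cdot+iy)$ on $K$ allows passing to the limit $y\to 0^+$. This defines
$$\la f_0,\phi\ra:=(-1)^{N+1}\int_{\R} F_{N+1}(x)\,\phi^{(N+1)}(x)\,dx,$$
which is independent of the integration constants in the $F_k$ (they contribute a polynomial of degree $\leq N$, annihilated by pairing with $\phi^{(N+1)}$ via a further integration by parts). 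The resulting bound $|\la f_0,\phi\ra|\leq C_K\sup_K|\phi^{(N+1)}|$ then shows $f_0\in\mathcal{D}^{',N+1}(I)$.

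The main technical obstacle is the inductive growth bookkeeping for the $F_k$: one must verify that each integration lowers the boundary singularity by exactly one order, carefully handling the transition from power growth ($k<N$) through logarithmic growth ($k=N$) to boundedness with continuous boundary values ($k=N+1$). Once these estimates are in place, the integration by parts and the passage to the limit are routine.
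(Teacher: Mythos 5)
The paper gives no proof of this proposition --- it is quoted directly from H\"ormander's Theorem 3.1.11 --- and your argument is correct and reproduces the standard proof from that reference: build an $(N+1)$-fold holomorphic primitive whose boundary singularity improves by one order at each step (powers for $k<N$, a logarithm at $k=N$, boundedness and continuous boundary values at $k=N+1$), then integrate by parts $N+1$ times against the test function and pass to the uniform limit. Your inductive estimates, the uniform Cauchy argument for $F_{N+1}$ via $\partial_y F_{N+1}=iF_N$, the observation that changing the primitives alters $F_{N+1}$ only by a polynomial of degree $\leq N$ killed by $\phi^{(N+1)}$, and the resulting bound showing $f_0\in\mathcal{D}^{\prime,N+1}(I)$ are all sound.
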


The following equations which contain double signs in either side, are to be
understood as double equations: one equation holding for the upper signs and the other holding for the lower signs. By  Proposition \ref{Thm.3.2.1.HI}, the function $z^a$, defined in $\C\setminus\R_-$ as $e^{a\ln z},$ where $z\in\R_+$, has distributional boundary values\index{distributional boundary values}
\begin{equation*}
 (\xi\pm i0)^a:=\dlim_{\eta\rightarrow 0^+}(\xi\pm i\eta)^a,
\end{equation*}
 on the real axis  from the upper and lower half planes.
 Now, for any test function $\phi$, the function $a\mapsto \la (\xi\pm i0)^a,\phi \ra$ is the limit of entire analytic functions, so it is
 an entire analytic function. Additionally, on the one hand, for $a\in\C\setminus \Z_{<0},$ we have
\begin{equation}\label{3.2.9.HI}
 (\xi\pm i0)^a=\xi^a_++e^{\pm a\pi i}\xi_-^a,
 \hspace{1cm}  \xi (\xi\pm i0)^a=\xi_-^{a+1}+e^{\pm a\pi i} \xi_+^{a+1}.
\end{equation}
Furthermore, if $a=-k$ where $k$ is a positive integer, we have
\begin{equation*}
 (\xi\pm i0)^{-k}=\xi^{-k}_+ +(-1)^k \xi_-^{-k} \pm \frac{\pi i (-1)^k}{(k-1)!}\delta_0^{(k-1)}.
\end{equation*}
On the other hand, if we set $\underline{\xi}^{-k}:=\big((\xi+i0)^{-k}+(\xi-i0)^{-k}\big)/2$,  we have
\begin{equation*}
 \underline{\xi}^{-k}=\xi^{-k}_+ +(-1)^k \xi^{-k}_-,
 \hspace{1cm}  \xi\underline{\xi}^{-k}=\underline{\xi}^{1-k}.
\end{equation*}
Finally,  for all $\phi\in C^1_0(\R),$ we have $\underline{\xi}^{-1}(\phi)=( \rm{P.V.} \frac{1}{\xi})(\phi)
 :=\lim_{\epsilon\rightarrow 0}\int_{|\xi|>\epsilon}\frac{\phi(\xi)}{\xi}d\xi,$
 which is usually called the {\em principal value} of $\frac{1}{\xi}$.\\
 \\
\textbf{The Fourier transform of log-homogeneous distribution:} Now, we describe some results related with Fourier transform of distributions in (\ref{Eq:Homog.Distrib}) and (\ref{Eq.log-derivatives}), in some case we give proofs for such results since they are not easy to found in the literature.\\ 
\\
The  Fourier transform $\mathcal{F}:\mathcal{S}(\R)\to \mathcal{S}(\R)$  can be extended can be extended to an isomorphism $\mathcal{F}:\mathcal{S}'(\R)\to \mathcal{S}'(\R)$ given by $\la \mathcal{F}[f], u  \ra:=\la  f,\mathcal{F}[u]  \ra$ for $u\in \mathcal{S}(\R)$. The following definition of the Laplace transform is due to L. Schwartz \cite{Schwartz52}.\\
For a compactly supported distribution $u\in\mathcal{E}'(\R^n)$, the {\em Laplace-Fourier transform}, by abuse of notation we also denote by $\mathcal{F}[u]$, is defined by a $C^\infty$-function given by 
\begin{equation}\label{Eq.analytic.Fourier.compact.dist}
\xi\mapsto  \mathcal{F}[u](\xi):=\la u,e^{-ix\xi}\ra,
\end{equation}
here $\la\,\,,\,\,\ra$ is the dual pair. The Laplace-Fourier transform may be extended to tempered distribution $u$ in $\mathcal{S}'(\R)$ whose support is bounded at the left, then in this case  \begin{equation}\label{Eq.analytic.Fourier}
\mathcal{F}[u](\zeta):=\la u, e^{-i x\zeta}\ra_{\mathcal{S}^\prime,\mathcal{S}},
\end{equation}
is well defined on $\textup{Re}\, \zeta<0$. Moreover, setting $\zeta=\xi+i\eta$ and fixed $\xi$, the Laplace-Fourier transform $\mathcal{F}[u](\zeta)$ coincides  with the Fourier transform of the function $\widehat{u}(\eta)$.

\begin{lemma}{\em (\cite{H}, Theorem 7.1.16.)}
 If $u\in\mathcal{S}^\prime(\R)$ is a homogeneous distribution of degree $a$, then its Fourier transform
 $\mathcal{F}[u](\phi):=\la u,\mathcal{F}[\phi]\ra$ is  a homogeneous distribution of degree $-a-1$.
\end{lemma}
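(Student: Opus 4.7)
The plan is to reduce the statement to the intertwining of the Fourier transform with dilations, together with the distributional characterization of homogeneity. For $t>0$, introduce the dilation operator $S_t\phi(x):=\phi(x/t)$ acting on test functions. The condition that $u$ be homogeneous of degree $a$ means $u(tx)=t^a u(x)$ in the distributional sense; via the substitution $y=tx$ in the formal integral $\int u(tx)\phi(x)\,dx$, this is equivalent to the pairing identity
\begin{equation*}
\la u, S_t\phi\ra = t^{a+1}\la u,\phi\ra, \qquad \forall\,\phi\in\mathcal{S}(\R),\ t>0.
\end{equation*}
So the first step is to record this equivalent form of the hypothesis, which is how homogeneity is accessed on the level of pairings.

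Next I would record the elementary intertwining of $\mathcal{F}$ with dilations. The change of variables $y=x/t$ in the defining integral of the Fourier transform yields
\begin{equation*}
\mathcal{F}[S_t\phi](\xi) = t\,\mathcal{F}[\phi](t\xi) = t\,S_{1/t}\mathcal{F}[\phi](\xi),
\end{equation*}
i.e. $\mathcal{F}\circ S_t = t\,S_{1/t}\circ\mathcal{F}$; the normalization constant in the definition of $\mathcal{F}$ cancels. This is the only analytic input beyond the definition of $\mathcal{F}$ on $\mathcal{S}'(\R)$ by duality, namely $\la \mathcal{F}[u],\phi\ra=\la u,\mathcal{F}[\phi]\ra$, as recalled earlier in the paper.

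Combining the two ingredients, set $v:=\mathcal{F}[u]\in\mathcal{S}'(\R)$ and compute
\begin{equation*}
\la v, S_t\phi\ra = \la u,\mathcal{F}[S_t\phi]\ra = t\,\la u, S_{1/t}\mathcal{F}[\phi]\ra = t\cdot\left(\tfrac{1}{t}\right)^{a+1}\la u,\mathcal{F}[\phi]\ra = t^{-a}\la v,\phi\ra,
\end{equation*}
where the penultimate equality applies the pairing characterization of homogeneity of $u$ with parameter $1/t$. Writing $t^{-a}=t^{(-a-1)+1}$ and invoking the first step in reverse, this is exactly the condition characterizing $v=\mathcal{F}[u]$ as a homogeneous distribution of degree $-a-1$.

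The argument is essentially bookkeeping: the only place where care is required is the consistent tracking of powers of $t$ and the sign convention for the exponent in the homogeneity relation (in particular the shift by $+1$ produced by the substitution on test functions, and its inversion on the Fourier side). No serious obstacle is expected.
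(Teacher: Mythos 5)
Your proof is correct and complete: the pairing form of homogeneity, the dilation--Fourier intertwining $\mathcal{F}\circ S_t = t\,S_{1/t}\circ\mathcal{F}$, and the bookkeeping of exponents all check out, yielding degree $-a-1$ as claimed. The paper itself gives no proof, only the citation to H\"ormander (Theorem 7.1.16), and your duality argument is precisely the standard one found there, specialized to dimension one where $-a-n$ becomes $-a-1$.
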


\begin{remark}\label{Remark.Indep.Cut-off}
 Let $\omega_1$ and $\omega_2$ be cut-off functions. We have that $\omega_1-\omega_2$ has compact support
 contained in a ring. Thus, the Fourier transform $(\omega_1-\omega_2)x^a_+$ is a smooth function on $\R$ and
 $$\mathcal{F}[(\omega_1-\omega_2)x^a_+]=O([\xi]^{-\infty}).$$ Similarly for the inverse Fourier transform.
\end{remark}

Consequently, the subsequent statements of  Proposition \ref{Fourier.transform.homogeneous}, Proposition \ref{Inverse.Fourier.transform.homogeneous} and  Corollary \ref{Fourier.transform.log-homogeneous}, are independent of the chosen cut-off function $\omega$ modulo smooth function of order $O([\xi]^{-\infty}).$

\begin{prop}\label{Fourier.transform.homogeneous}
 Let $\omega$ be a cut-off function and  $a\in\C\setminus \Z_{<0}$.
\begin{enumerate}
\item  The Fourier transform of $x^a_\pm$
 is positively homogeneous of degree $-a-1$ and
\begin{equation*}
 \mathcal{F}[x^a_\pm](\xi)=\frac{\Gamma(a+1)}{\sqrt{2\pi}}e^{\mp i\pi(a+1)/2}(\xi\mp i0)^{-a-1},
\end{equation*}
 and from Equation (\ref{3.2.9.HI}) it follows that $ \mathcal{F}[(x\pm i0)^a](\xi)=\frac{\sqrt{2\pi}}{\Gamma(-a)}e^{\pm i\pi (a/2)} \xi^{-a-1}_\pm.$

\item The Fourier transform of $(1-\omega) x^a_+$ is a smooth function on $\R$ and we have
\begin{equation*}
 \mathcal{F}[(1-\omega)x^a_\pm](\xi)=O([\xi]^{-\infty}) \text{ as } |\xi|\rightarrow \infty.
\end{equation*}

\item
 Since $\mathcal{F}[x^a_+]=\mathcal{F}[\omega  x^a_+] +\mathcal{F}[(1-\omega) x^a_+]$  it  follows  that
\begin{equation*}
 \mathcal{F}[\omega  x^a_\pm](\xi)=\frac{\Gamma(a+1)}{\sqrt{2\pi}}e^{\mp i\pi(a+1)/2}(\xi\mp i0)^{-a-1}+O([\xi]^{-\infty}).
\end{equation*}

\item Finally,
\begin{equation*}
 \frac{d}{d\xi}\mathcal{F}[x^a_\pm](\xi)=-i \mathcal{F}[x(x^{a}_\pm)](\xi)=-i \mathcal{F}[x^{a+1}_\pm](\xi).
\end{equation*}

\end{enumerate}
\end{prop}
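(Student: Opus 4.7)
The plan is to treat (1) as the core identity, from which (3) and (4) follow almost formally, while (2) requires a separate integration-by-parts argument. The two main tools are contour rotation in an oscillatory integral (for an initial strip of $a$) and analytic continuation in $a$ via the Riesz representation (\ref{Eq:Homog.Distrib}) to extend to all $a\in\C\setminus\Z_{<0}$.

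For (1), I would first establish the formula in the strip $-1<\textup{Re}\, a<0$, where $\mathcal{F}[x^a_+](\xi)=\frac{1}{\sqrt{2\pi}}\int_0^\infty x^a e^{-ix\xi}\,dx$ converges as an improper/oscillatory integral for $\xi\neq 0$. For $\xi>0$, the substitution $t=x\xi$ followed by rotation of the contour from $\R_+$ to the ray $e^{-i\pi/2}\R_+$ (justified by decay of $e^{-ix\xi}$ in the lower half-plane together with a Jordan-type estimate) yields $\xi^{-a-1}e^{-i\pi(a+1)/2}\Gamma(a+1)$; for $\xi<0$ the analogous rotation into the upper half-plane produces the opposite branch. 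Both cases are encoded uniformly by the distributional boundary value $(\xi\mp i0)^{-a-1}$, which makes sense by Proposition \ref{Thm.3.2.1.HI}. Extension to general $a\in\C\setminus\Z_{<0}$ is then obtained by analytic continuation: using $x^a_+=\partial_x^k x^{a+k}_+/\bigl((a+1)\cdots(a+k)\bigr)$ from (\ref{Eq:Homog.Distrib}) on the left, and analyticity of $\Gamma(a+1)$ and of $a\mapsto (\xi\mp i0)^{-a-1}$ on the right, both sides define meromorphic $\mathcal{S}'(\R)$-valued functions of $a$ whose poles and residues cancel at nonnegative integers, so the identity extends to $\C\setminus\Z_{<0}$. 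The homogeneity of degree $-a-1$ is then immediate from that of $(\xi\mp i0)^{-a-1}$. The case of $x^a_-$ is entirely symmetric.

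For (2), smoothness on $\R$ is immediate since $1-\omega$ vanishes near zero and therefore kills the singularity of $x^a_\pm$ at the origin, the product being smooth everywhere else. The rapid decrease is the nontrivial part. For $\textup{Re}\, a<-1$ the function $(1-\omega)(x)x^a$ and each of its derivatives are integrable on $\R_+$; I would integrate by parts $N$ times in
\begin{equation*}
\mathcal{F}[(1-\omega)x^a_+](\xi)=\frac{1}{\sqrt{2\pi}}\int_0^\infty (1-\omega)(x)\,x^a\, e^{-ix\xi}\,dx,
\end{equation*}
observing that boundary terms at both $0$ and $+\infty$ vanish because $(1-\omega)$ and all its derivatives vanish near $0$ while $\partial_x^j[(1-\omega)(x)x^a]$ is $O(x^{\textup{Re}\, a-j})$ at infinity. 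After $N$ integrations by parts one gains a factor $(i\xi)^{-N}$, and the integrand stays absolutely integrable as soon as $N>\textup{Re}\, a+1$, giving $|\mathcal{F}[(1-\omega)x^a_+](\xi)|\leq C_N|\xi|^{-N}$ for every $N$. For general $a\in\C\setminus\Z_{<0}$ the same analytic continuation strategy as in (1), now applied to the pairing with a Schwartz test function, transfers the estimate to the full parameter range (the bound remains uniform on compact subsets in $a$). Remark \ref{Remark.Indep.Cut-off} already guarantees that the result is independent of the particular $\omega$ chosen.

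Part (3) is then the immediate consequence $\mathcal{F}[\omega x^a_\pm]=\mathcal{F}[x^a_\pm]-\mathcal{F}[(1-\omega)x^a_\pm]$. Part (4) follows from the standard identity $\tfrac{d}{d\xi}\mathcal{F}[u](\xi)=-i\mathcal{F}[xu](\xi)$ on $\mathcal{S}'(\R)$, combined with $xx^a_\pm=x^{a+1}_\pm$ from (\ref{3.2.1.HI}), valid on $\C\setminus\Z_{\leq 0}$ by analytic continuation. The main obstacle is the careful bookkeeping of the analytic continuation in (1): both the left- and right-hand sides have apparent poles at negative integers, and one must check that these cancel in such a way that the boundary-value distributions $(\xi\mp i0)^{-a-1}$ and the Riesz extensions of $x^a_\pm$ agree on the nose throughout $\C\setminus\Z_{<0}$. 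Proposition \ref{Thm.3.2.1.HI} supplies exactly the distributional framework needed to control the limits at $\xi=0$ during this continuation.
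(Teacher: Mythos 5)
Your treatment of (1), (3) and (4) follows essentially the paper's route. The paper computes $\mathcal{F}[e^{-\eta x}x^a_+]$ for $\textup{Re}\, a>-1$, rotates the contour onto the ray through $\eta+i\xi$ to reduce to $\Gamma(a+1)$, lets $\eta\to 0^+$ to produce the boundary value $(\xi-i0)^{-a-1}$, and extends to all $a$ by analyticity of both sides paired against a test function; your variant (conditionally convergent oscillatory integral on the strip $-1<\textup{Re}\, a<0$, then continuation through the Riesz formula (\ref{Eq:Homog.Distrib})) is the same idea. Two small remarks: there is no ``cancellation of poles at nonnegative integers'' to verify --- on $\C\setminus\Z_{<0}$ both sides are already analytic in $a$ and the identity simply propagates from the initial strip --- and you omit the second formula of item (1), which the paper obtains from (\ref{3.2.9.HI}) together with $\Gamma(-a)\Gamma(a+1)=\pi/\sin(-\pi a)$.

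The genuine problems are in (2). First, your one-line argument for smoothness is a non sequitur: smoothness of the function $x\mapsto (1-\omega)(x)x^a_+$ says nothing about smoothness of its \emph{Fourier transform}. Indeed $\mathcal{F}[(1-\omega)x^a_+]=\mathcal{F}[x^a_+]-\mathcal{F}[\omega x^a_+]$ differs from $c_a(\xi-i0)^{-a-1}$ by an entire function, so it is \emph{not} smooth at $\xi=0$; what is smooth (indeed entire) is $\mathcal{F}[\omega x^a_+]$, because $\omega x^a_+$ is a compactly supported distribution (Lemma \ref{Lemma.Fourier.2}), and that is what the paper's proof actually establishes and what item (3) requires. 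Second, the passage from $\textup{Re}\, a<-1$ to general $a\in\C\setminus\Z_{<0}$ by ``analytic continuation of the estimate'' is not a valid step: an inequality proved on one region of the parameter does not continue analytically to another, and uniformity on compact subsets of the region where you proved it yields nothing outside that region. The repair is the paper's duality argument, which needs no continuation at all: for every $k$ one has $\xi^k\mathcal{F}[(1-\omega)x^a_+]=\mathcal{F}\big[D^k\big((1-\omega)x^a_+\big)\big]$ in $\mathcal{S}'(\R)$, and since $1-\omega$ vanishes near the origin, $D^k\big((1-\omega)x^a_+\big)$ is a genuine function, $O(x^{\textup{Re}\, a-k})$ at infinity, hence integrable as soon as $k>\textup{Re}\, a+1$; its Fourier transform is then bounded and $\mathcal{F}[(1-\omega)x^a_+]=O([\xi]^{-k})$ for all such $k$, which gives the $O([\xi]^{-\infty})$ decay directly.
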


\begin{proof}

\begin{enumerate}
\item
 Observe that when $\eta>0$ and $\textup{Re}\, a>-1$ the Fourier transform
 of the rapidly decreasing function $e^{-\eta x}x^a_+$ is
\begin{equation*}
\xi\mapsto \Gamma(a+1)\int_0^\infty
x^ae^{-x(\eta+i\xi)}dx=\Gamma(a+1)(\eta+i\xi)^{-a-1}\int_0^\infty
z^ae^{-z}dz
\end{equation*}
 where the last integral is taken on the ray generated by $\eta+i\xi$ and
 $z^a$ is defined in $\C$ slit along $\R_-$ (so $1^a=1$). From the Cauchy integral formula it follows that the integral can be taken along $\R_+.$
 Therefore the Fourier transform  reads
\begin{equation*}
 \xi\mapsto \Gamma(a+1)(\eta+i\xi)^{-a-1}=\Gamma(a+1)e^{-i\pi(a+1)/2}(\xi-i\eta)^{-a-1}.
\end{equation*}
 When $\eta\rightarrow 0$, the Fourier transform of $x^a_+$ has the form
\begin{equation*}
 \la x^a_+,\hat{\phi} \ra=\Gamma(a+1)e^{-i\pi (a+1)/2}\la (\xi-i0)^{-a-1},\phi\ra,\ \phi\in\mathcal{S},\ Re(a)>-1.
\end{equation*}
 Both sides are entire  analytic functions of $a$ so   the identity extends to all $a\in\C$. The second statement follows by the
 well known identity $\Gamma(-a)\Gamma(a+1)=\pi/ \sin (-\pi a).$

\item Since $\omega(x)x^a_+$ is a compactly supported distribution, it follows from Lemma \ref{Lemma.Fourier.2} that the Fourier transform
 of $\omega(x)x^a_+$ is a smooth function on $\R$. This proves the first part of this item.
 For the second part, let $\phi\in\mathcal{S}(\R).$
\begin{eqnarray*}
 \la \xi^k\mathcal{F}[(1-\omega)x^a_+], \phi \ra &=&\la \mathcal{F}[(1-\omega)x^a_+], \xi^k\phi\ra\\
 &=& \la (1-\omega)x^a_+, \mathcal{F}[\xi^k\phi]\ra\\
 &=& \la (1-\omega)x^a_+, (-D^k)\mathcal{F}[\phi]\ra\\
 &=& \la \mathcal{F}[D^k((1-\omega)x^a_+)], \phi \ra.
\end{eqnarray*}
 If $k<\textup{Re}\, a+1$, then $D^k \big( (1-\omega)x^a_+\big)$ is an integrable function and its Fourier transform  is therefore well-defined and bounded.
 Thus we see that $ \mathcal{F}[(1-\omega)x^a_\pm](\xi)=O([\xi]^{-k})$ for each $k\in\N$.
\item Item (3) in Proposition \ref{Fourier.transform.homogeneous} is immediate  from (1) and (2).
\item It follows from the definition of the derivative of a distribution.
\end{enumerate}
\end{proof}

\begin{prop}\label{Inverse.Fourier.transform.homogeneous}
  Let $\omega$ be a cut-off function and $a\in\C.$
\begin{enumerate}
\item The inverse Fourier transform of $(\xi\pm i0)^a$ is
\begin{equation*}
 \mathcal{F}^{-1}[(\xi\mp i0)^a]= \frac{\sqrt{2\pi}}{\Gamma(-a)}e^{\pm  i\pi (a/2)} x^{-a-1}_\pm.
\end{equation*}
 In particular, $\mathcal{F}^{-1}[\mathcal{F}[x_\pm^a]]= x_\pm^a$.
\item The inverse Fourier transform of the tempered distribution $\omega  (\xi\mp i0)^a$ is a smooth function
 and we have $$ \mathcal{F}^{-1}[\omega (\xi\mp i0)^a](x)=O([x]^{-\infty}).$$
\item
 Since $\mathcal{F}^{-1}[(\xi\mp i0)^a]=\mathcal{F}^{-1}[\omega (\xi\mp i0)^a] +\mathcal{F}^{-1}[(1-\omega)  (\xi\mp i0)^a]$  it follows
\begin{equation*}
 \mathcal{F}^{-1}[(1-\omega) (\xi\mp i0)^a](x)=\frac{\sqrt{2\pi}}{\Gamma(a+1)}e^{\pm  i\pi a/2} x^{-a-1}_\mp+O([x]^{-\infty}).
\end{equation*}
\item Finally,
\begin{equation*}
 \frac{d}{d x}\mathcal{F}^{-1}[(\xi\mp i0)^a](x)= -i \mathcal{F}^{-1}[(\xi\mp i0)^{a+1}](x).
\end{equation*}
\end{enumerate}
\end{prop}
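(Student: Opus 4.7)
The strategy is to mirror the proof of Proposition \ref{Fourier.transform.homogeneous}, exploiting $\mathcal{F}\circ\mathcal{F}^{-1}=\textup{Id}$ on $\mathcal{S}'(\R)$ and the Paley--Wiener theorem for compactly supported distributions. For item (1), I would start from the identity $\mathcal{F}[x^a_\pm](\xi) = \frac{\Gamma(a+1)}{\sqrt{2\pi}} e^{\mp i\pi(a+1)/2}(\xi \mp i0)^{-a-1}$ of Proposition \ref{Fourier.transform.homogeneous}(1), which is valid for all $a\in\C\setminus\Z_{<0}$ by analytic continuation in $a$. Applying $\mathcal{F}^{-1}$ to both sides (a continuous automorphism of $\mathcal{S}'(\R)$) and then performing the substitution $b := -a-1$, so that $\Gamma(a+1) = \Gamma(-b)$ and $-(a+1)/2$ is re-expressed in terms of $b$, yields the claimed formula for $\mathcal{F}^{-1}[(\xi \mp i0)^b]$; the relation $\mathcal{F}^{-1}[\mathcal{F}[x^a_\pm]] = x^a_\pm$ is then immediate from $\mathcal{F}^{-1}\circ\mathcal{F}=\textup{Id}$.

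For item (2), observe that $\omega(\xi)(\xi \mp i0)^a$ is a tempered distribution of compact support in $\R$, so by the Paley--Wiener theorem for compactly supported distributions $\mathcal{F}^{-1}[\omega(\xi)(\xi \mp i0)^a]$ extends to an entire function on $\C$; in particular it is smooth on $\R$. To establish rapid decay at infinity, I would mimic the argument used in the proof of Proposition \ref{Fourier.transform.homogeneous}(2) with the roles of position and frequency exchanged: for $\phi\in\mathcal{S}(\R)$ and $N\in\N$, use $\langle x^N \mathcal{F}^{-1}[u],\phi\rangle = i^N\langle \mathcal{F}^{-1}[\partial_\xi^N u],\phi\rangle$ and argue that, for $N$ taken large enough relative to $\textup{Re}\, a$, the distribution $\partial_\xi^N(\omega(\xi)(\xi \mp i0)^a)$ is controlled as an integrable function (first in the regime where $\textup{Re}\, a$ is large positive, then for all admissible $a$ by analytic continuation in the parameter $a$). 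This yields $|x^N\mathcal{F}^{-1}[\omega(\xi)(\xi\mp i0)^a](x)|\le C_N$ for every $N$. Item (3) then follows at once by applying $\mathcal{F}^{-1}$ termwise to the decomposition $(\xi \mp i0)^a = \omega(\xi)(\xi \mp i0)^a + (1-\omega(\xi))(\xi \mp i0)^a$ and combining items (1) and (2).

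For item (4), I would differentiate under $\mathcal{F}^{-1}$ at the level of tempered distributions to obtain $\frac{d}{dx}\mathcal{F}^{-1}[(\xi \mp i0)^a](x) = \mathcal{F}^{-1}[i\xi (\xi \mp i0)^a](x)$, and then invoke the distributional identity $\xi(\xi \mp i0)^a = (\xi \mp i0)^{a+1}$ obtained from equation (\ref{3.2.9.HI}) together with the algebraic relation $z\cdot z^a = z^{a+1}$ on $\C\setminus\R_{\le 0}$ by passing to the boundary. The main obstacle is the rapid decay in item (2): compact support of $\omega(\xi)(\xi \mp i0)^a$ only guarantees polynomial growth of its inverse Fourier transform a priori, and the singularity at $\xi=0$ is \emph{worsened} by each differentiation $\partial_\xi$, so the estimate requires a careful transfer, via analytic continuation in $a$, from the easy regime $\textup{Re}\, a \gg 0$ to general $a \in \C\setminus\Z_{<0}$.
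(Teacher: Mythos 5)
Your item (1) is essentially the paper's own argument: the paper applies $\mathcal{F}^{-1}=\mathcal{F}^{3}$ to the formula for $\mathcal{F}[(x\pm i0)^a]$ from Proposition \ref{Fourier.transform.homogeneous}, while you invert the formula for $\mathcal{F}[x^a_\pm]$ and relabel $a\mapsto -a-1$; these are the same computation. (Your route yields the phase $e^{\mp i\pi a/2}$, which agrees with what the paper's displayed computation actually produces but not with the sign printed in the statement; one of the $\pm$'s is off, a minor point.) Item (4) is fine. The paper gives no proof of items (2)--(4) beyond ``the other statements follow in a similar way'', so the substantive issue is your argument for (2), and there the proposal does not close.

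The integration-by-parts device of Proposition \ref{Fourier.transform.homogeneous}(2) works there because $(1-\omega)x^a_+$ vanishes near $x=0$ and each derivative \emph{improves} the decay at infinity, so $D^k\big((1-\omega)x^a_+\big)$ is integrable for every $k>\textup{Re}\, a+1$. For $\omega(\xi)(\xi\mp i0)^a$ the situation is reversed: compact support removes any problem at infinity, but the distribution carries the full homogeneous singularity at $\xi=0$, and each $\partial_\xi$ makes it worse, so $\partial_\xi^{N}\big(\omega(\xi)(\xi\mp i0)^a\big)$ is an integrable function only for $N<\textup{Re}\, a+1$. Even for $\textup{Re}\, a\gg 0$ you therefore obtain $|x|^{-N}$ decay for only finitely many $N$, never rapid decay; and analytic continuation in $a$ cannot repair this, since it propagates identities, not pointwise bounds, and the bound you would want to continue is not available for all $N$ in any regime of $a$. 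In fact the claim you are trying to prove is false for $a\notin\Z_{\geq 0}$: writing $\mathcal{F}^{-1}[\omega(\xi)(\xi\mp i0)^a]=\mathcal{F}^{-1}[(\xi\mp i0)^a]-\mathcal{F}^{-1}[(1-\omega)(\xi\mp i0)^a]$, item (1) makes the first term a nonzero multiple of $x^{-a-1}_\pm$, which decays only like $|x|^{-\textup{Re}\, a-1}$, while it is the second term that is $O([x]^{-\infty})$ at infinity --- your integration-by-parts argument applies verbatim to $(1-\omega)(\xi\mp i0)^a$, whose derivatives gain decay at infinity and have no singularity at $\xi=0$. In other words, the roles of $\omega$ and $1-\omega$ in items (2) and (3) are interchanged relative to the model Proposition \ref{Fourier.transform.homogeneous}; note also that the leading coefficient printed in item (3) is not what ``item (1) minus item (2)'' produces, which should have triggered the same alarm when you asserted that (3) follows at once from (1) and (2). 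The correct statements, which your method does prove, are that $\mathcal{F}^{-1}[(1-\omega)(\xi\mp i0)^a]$ is $O([x]^{-\infty})$ as $|x|\to\infty$ and that $\mathcal{F}^{-1}[\omega(\xi)(\xi\mp i0)^a]$ equals the right-hand side of item (1) plus a rapidly decreasing remainder.
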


\begin{proof}
 The first statement follows from the first item in the above Proposition by the Fourier transform inversion formula $\mathcal{F}^{-1}=\mathcal{F}^{3}$:
\begin{eqnarray*}
 \mathcal{F}^{2}[(\xi\pm i0)^a]  &=& \frac{\sqrt{2\pi}}{\Gamma(-a)}e^{\pm i\pi (a/2)} \mathcal{F}[x_\pm^{-a-1}]\\
 &=& \frac{\sqrt{2\pi}}{\Gamma(-a)}e^{\pm i\pi (a/2)} \frac{\Gamma(-a)}{\sqrt{2\pi}}e^{\pm i\pi(a/2)}(\xi\mp i0)^{a}\\
 &=& e^{\pm i\pi a}(\xi\mp i0)^{a}.
\end{eqnarray*}
 Then $\mathcal{F}^{3}[(\xi\pm i0)^a]=e^{\pm i\pi a}\mathcal{F}[(\xi\mp i0)^{a}]=\frac{\sqrt{2\pi}}{\Gamma(-a)}e^{\pm i\pi (a/2)}x_\mp^{-a-1}.$
 The others statement follow in a similar way.
\end{proof}

As a consequence of Proposition \ref{Fourier.transform.homogeneous} and (\ref{Eq.log-derivatives}) we have the following result.

\begin{cor}\label{Fourier.transform.log-homogeneous}
 Let $a\in\C\setminus \Z_{<0},\ k\in \N$ and let $\omega$ be a cut-off function.
\begin{enumerate}
\item   There exists some $k\in\Z$ such that $\textup{Re}\, a+k>-1$ and
\begin{equation}\label{Fourier.defin.log}
  \mathcal{F}[x^a_\pm \ln^l x_\pm ](\xi)=i^k\xi^k \partial^l_a\Big( \frac{\mathcal{F}[x^{a+k}_\pm](\xi)}{(a+1)\cdots (a+k)}  \Big)
\end{equation}
 is independent of the choice of integer $k$. In particular, $\mathcal{F}[x^a_\pm \ln^k x_\pm ](\xi)$
 is log-polyhomogeneous of homogeneity degree $-a-1$ and logarithmic degree $l$, i.e. for any $a\in\C\setminus \Z_{<0},\ l\in\N,$
 there exist constants $c^+_{ij}, c^-_{ij},\ j=0,1,\cdots l$ (depending of $a,l$) such that
\[ \mathcal{F}[x^a_\pm \ln^l x_\pm ](\xi)= \sum\limits_{j=0}^{l}c^+_{j}\, \xi^{-a-1}_+\ln^{l-j} \xi+ \sum\limits_{j=0}^{l}  e^{\mp(-a-1)\pi i}c^+_{j}\, \xi^{-a-1}_-\ln^{l-j} \xi.\]
In particular for $l=0$, we have
\[ \mathcal{F}[x^a_+ ](\xi)= c^+ \xi^{-a-1}_++e^{i(a+1)\pi }c^+\xi^{-a-1}_-.\]
\item The Fourier transform of $(1-\omega) x^a_+\ln^l x$ is a smooth function on $\R$ and as $|\xi|\rightarrow \infty$
$$ \mathcal{F}[(1-\omega) x^a_\pm \ln^l x_\pm ](\xi)=O([\xi]^{-\infty}).$$
\item Finally, there exist constants $c^+_{ij}, c^-_{ij},\ j=0,1,\cdots l$ (depending on $a,l$) such that
\[ \mathcal{F}[\omega  x^a_\pm \ln^l x_\pm ](\xi)=\sum\limits_{j=0}^{l}c^+_{j}\, \xi^{-a-1}_+\ln^{l-j} \xi+\sum\limits_{j=0}^{l}c^-_{j}\, \xi^{-a-1}_-\ln^{l-j} \xi
 +O([\xi]^{-\infty}).\]
\end{enumerate}
\end{cor}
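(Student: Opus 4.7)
The plan is to derive the three assertions by combining Proposition \ref{Fourier.transform.homogeneous} with the identity $x^a_\pm \ln^l x_\pm = \partial_a^l x^a_\pm$ recorded in (\ref{Eq.log-derivatives}) and the boundary-value decomposition (\ref{3.2.9.HI}). First I would fix $a \in \C \setminus \Z_{<0}$ and pick $k \in \N$ large enough that $\textup{Re}\,(a+k) > -1$, so that $x^{a+k}_\pm$ is locally integrable and its Fourier transform is given explicitly by Proposition \ref{Fourier.transform.homogeneous}(1). Combining the Riesz representation (\ref{Eq:Homog.Distrib}) with the rule $\mathcal{F}[\partial_x^k u] = (i\xi)^k \mathcal{F}[u]$ yields
\[ \mathcal{F}[x^a_\pm](\xi) \;=\; \frac{i^k \xi^k \, \mathcal{F}[x^{a+k}_\pm](\xi)}{(a+1) \cdots (a+k)}, \]
and differentiating $l$ times in $a$, which is justified because both sides are analytic in $a$ on the relevant domain, gives (\ref{Fourier.defin.log}); independence of $k$ is then automatic from the uniqueness of analytic continuation.

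The explicit form in assertion (1) follows by plugging in the formula for $\mathcal{F}[x^{a+k}_\pm]$ from Proposition \ref{Fourier.transform.homogeneous}(1) and using (\ref{3.2.9.HI}) to expand $(\xi \mp i0)^{-a-k-1} = \xi_+^{-a-k-1} + e^{\mp(a+k+1)\pi i}\xi_-^{-a-k-1}$. Iterated use of $\xi \cdot \xi_\pm^{b} = \xi_\pm^{b+1}$, the natural analytic extension of (\ref{3.2.1.HI}), collapses the $\xi^k$ prefactor and reduces the right-hand side to a linear combination of $\xi_+^{-a-1}$ and $\xi_-^{-a-1}$ with an $a$-dependent scalar prefactor of the form $\Gamma(a+k+1) e^{\mp i\pi(a+k+1)/2}/\prod_{j=1}^k(a+j)$. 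Applying the Leibniz rule to $\partial_a^l$ distributes the $l$ derivatives between this scalar prefactor, which contributes only $\xi$-independent numbers, and the distributional factors, for which $\partial_a \xi_\pm^{-a-1} = -(\ln \xi_\pm)\, \xi_\pm^{-a-1}$; collecting powers of $\ln \xi$ produces exactly the claimed finite sum $\sum_{j=0}^l c_j^\pm \, \xi_\pm^{-a-1} \ln^{l-j} \xi$, and the $l = 0$ specialisation reproduces Proposition \ref{Fourier.transform.homogeneous}(1) by inspection.

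Assertion (2) follows by the same integration-by-parts argument used in the proof of Proposition \ref{Fourier.transform.homogeneous}(2): the identity $\xi^k \mathcal{F}[(1-\omega) x^a_\pm \ln^l x_\pm](\xi) = (-i)^k \mathcal{F}[\partial_x^k((1-\omega) x^a_\pm \ln^l x_\pm)](\xi)$ exhibits the left-hand side as the Fourier transform of an integrable function whenever $k > \textup{Re}\, a + 1$, which gives the $O([\xi]^{-\infty})$ bound since $k$ may be taken arbitrarily large. Assertion (3) is then immediate from the decomposition $\mathcal{F}[x^a_\pm \ln^l x_\pm] = \mathcal{F}[\omega\, x^a_\pm \ln^l x_\pm] + \mathcal{F}[(1-\omega) x^a_\pm \ln^l x_\pm]$, together with assertion (1) and Remark \ref{Remark.Indep.Cut-off}. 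The main technical obstacle is the bookkeeping in the second paragraph: carefully tracking how $\partial_a^l$ distributes across the product of scalar and distributional factors via Leibniz, and verifying that multiplication by $\xi^k$ against the boundary value $(\xi \mp i0)^{-a-k-1}$ collapses cleanly to $\xi_\pm^{-a-1}$ without hidden delta-function contributions, which is safe precisely because the restriction $a \notin \Z_{<0}$ keeps $a+k+1$ away from the exceptional positive integer values of (\ref{3.2.9.HI}).
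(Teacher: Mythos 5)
Your proposal is correct and follows essentially the same route as the paper: derive (\ref{Fourier.defin.log}) from the Riesz representation (\ref{Eq:Homog.Distrib}) together with $x^a_\pm\ln^l x=\partial_a^l x^a_\pm$, substitute the explicit formula of Proposition \ref{Fourier.transform.homogeneous}(1) and expand via (\ref{3.2.9.HI}) and Leibniz, and obtain items (2) and (3) from the integration-by-parts decay argument and the cut-off decomposition. The only cosmetic differences are that the paper computes the $l=1$ case explicitly and iterates $\partial_a$, where you apply the Leibniz rule for general $l$ directly, and that you rerun the integration-by-parts estimate for item (2) rather than differentiating the $l=0$ statement in $a$; both variants are equivalent.
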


\begin{proof}
\begin{enumerate}
\item
 For $\textup{Re}\, a>-k,$ it follows that
\begin{eqnarray*}
 \mathcal{F}[x^a_\pm \ln^l x ](\xi)= \partial^l_a\Big( \frac{\mathcal{F}[\partial^l_x x^{a+k}_\pm](\xi)}{(a+1)\cdots (a+k)}\Big)
  =i^k\xi^k \partial^l_a\Big( \frac{\mathcal{F}[x^{a+k}_\pm](\xi)}{(a+1)\cdots (a+k)}\Big).
\end{eqnarray*}
 The independence of the choice of $k$ comes from the independence of $k$ in the definition $x^a_\pm$ in (\ref{Eq.3.Hormander}).
It follows from  Proposition \ref{Fourier.transform.homogeneous}  that $\mathcal{F}[x^a_+]$ is positively homogeneous of degree $-a-1.$
 Thus, $\mathcal{F}[x^a_\pm \ln^l x_\pm ](\xi)$ is log-polyhomogeneous of homogeneity degree $-(a+k)-1+k=-a-1$ and logarithmic degree $l$.
 More precisely, by  a straightforward computation we obtain
\begin{eqnarray*}
 \mathcal{F}[x^a_+\ln x]&=& i^k\xi^k \partial^l_a\Big( \frac{\mathcal{F}[x^{a+k}_+](\xi)}{(a+1)\cdots (a+k)}\Big) \\
  &=& (2\pi)^{1/2}i^k\xi^k \partial^l_a\Big( \frac{\Gamma(a+k+1)e^{-i\pi (a+k+1)/2}}{(a+1)\cdots (a+k)}  (\xi-i0)^{-a-k-1}\Big)\\
  &=&  \frac{(2\pi)^{1/2}i^k \Gamma(a+k+1)e^{-i\pi (a+k+1)/2}}{(a+1)\cdots (a+k)} (\xi-i0)^{-a-1}\\
  &&\ \ \ \     \times \Big(-\sum\limits_{j=1}^{k}\frac{1}{a+j}-\frac{\Gamma'(a+k+1)}{\Gamma(a+k+1)}+\frac{i\pi}{2}+\ln (\xi-i0) \Big).
\end{eqnarray*}
The assertion comes from $ (\xi\pm i0)^a=\xi^a_++e^{\pm a\pi i}\xi_-^a$, see (\ref{3.2.9.HI}). The general assertion for $\mathcal{F}[x^a_+\ln^l x]$ follows from $\mathcal{F}[x^a_+\ln^l x]=\partial^{l-1}_a \mathcal{F}[x^a_+\ln x]$.

\item  Follows from item $2.$ in Proposition \ref{Fourier.transform.homogeneous} and equation (\ref{Fourier.defin.log}).
\item
 The assertion follows from item $1.$ and $2.$ in Corollary \ref{Fourier.transform.log-homogeneous}. and
 $$x^a_\pm \ln^l x=\omega x^a_\pm \ln^l x +(1-\omega )x^a_\pm \ln^l x.$$
\end{enumerate}
\end{proof}

\subsection{ A Paley-Wiener theorem for functions with log- polyhomogeneous asymptotic behavior at zero and
positive support }\label{SS:P-W.Fourier.Log-poly} 

We establish a Paley-Wiener type theorem, for the Fourier transform, of  functions with log-polyhomogeneous asymptotic behavior at zero (cf. Definition \ref{Def.Conormal.Sing.0}).\\
\\
Notice that, for $u\in L^2(\R_\pm)$, the extension by zero of $e_\pm  u$ lies in $L^2(\R)$. Similarly, we can extend the operators $e_\pm$ to functions $u\in\mathcal{S}_{\PP}(\R_+)$ for any appropriate power set $\PP$. The following Proposition is due to C. Neira, E. Schrohe and S. Paycha.
 
\begin{prop}\label{extension.poly}
 Let $\PP\in\uP$ be an appropriate power set.  Any  $u$ in $\mathcal{S}_{{\PP}}(\R_+)$ admits an extension
  $ e_+u  $ in $ \mathcal{S}^\prime(\overline{\R}_+)$, where
 $ \mathcal{S}^\prime(\overline{\R}_+)$ denotes the set of tempered  distributions on $\R$ with support in $\overline{\R}_+$,
 and  we have,  as $x\rightarrow 0^+$,
\begin{equation}\label{Conormal.Sing}
 e_+u(x)\sim\sum\limits_{j=0}^{\infty}\sum\limits_{k=0}^{m_j}a_{jk}x^{p_j}_+\ln^k x.
\end{equation}
\end{prop}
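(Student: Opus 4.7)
The plan is to extend $u$ by separating its singular behavior at $0$ from its smooth part. Fix a cut-off function $\omega$ around $0$. First I would observe that $(1-\omega)\,u$ lies in $\mathcal{S}(\R_+)$ and vanishes in a neighborhood of $0$, so its extension by zero to $\R$ lies in $\mathcal{S}(\R) \subset \mathcal{S}^\prime(\R)$ with support in $\overline{\R}_+$.

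Second, for $\omega u$ I would exploit the log-polyhomogeneous expansion from Definition \ref{Def.Conormal.Sing.0}: for any $N \in \N$ the function
$$R_N(x) := \omega(x)\Big( u(x) - \sum_{j=0}^{N}\sum_{k=0}^{m_j} a_{jk}\, x^{p_j}\ln^k x \Big)$$
satisfies $\partial^\ell_x R_N(x) = O(x^{\textup{Re}\, p_{N+1} - \ell})$ for every $\ell \in \N$. Hence, given any prescribed $M$, one can take $N$ large enough so that $R_N \in C^M_c(\R)$, and its extension by zero defines a compactly supported distribution of finite order with support in $\overline{\R}_+$.

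Third, because $\PP$ is an \emph{appropriate} power set, no $p_j$ is a negative integer, so Section \ref{SS:HomoD} provides tempered distributions $x^{p_j}_+\ln^k x$ on $\R$ (via the Riesz extension, cf.\ (\ref{Eq:Homog.Distrib}) and (\ref{Eq.log-derivatives})) with support in $\overline{\R}_+$. Multiplying by the cut-off $\omega$ preserves both the temperedness and the support condition. I would then set
$$e_+ u \; := \; e_+\big((1-\omega)u\big) \;+\; \sum_{j=0}^{N}\sum_{k=0}^{m_j} a_{jk}\,\omega(x)\, x^{p_j}_+\ln^k x \;+\; e_+(R_N).$$
Each summand belongs to $\mathcal{S}^\prime(\R)$ with support in $\overline{\R}_+$, and hence so does the sum.

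The main obstacle is to check that this definition is independent of the truncation $N$ and of the cut-off $\omega$. Passing from $N$ to $N+1$ subtracts the term $a_{j,k}\,\omega(x)\, x^{p_{N+1}}\ln^k x$ (in the tempered sense) from $R_N$ while simultaneously adding it as an explicit singular summand: these cancel because the tempered distribution $x^{p_j}_+\ln^k x$ agrees with the locally integrable function $x^{p_j}\ln^k x$ wherever the latter makes sense, and is otherwise fixed by analytic continuation. Independence of $\omega$, up to a Schwartz function supported in $\overline{\R}_+$, follows from Remark \ref{Remark.Indep.Cut-off}, since two cut-off functions around zero differ by a compactly supported function vanishing near $0$. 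Finally, the asymptotic expansion (\ref{Conormal.Sing}) is built into the construction: on $\R_+$ the distribution $e_+u$ restricts to $u$, whose log-polyhomogeneous expansion at $0$ is the hypothesis, and each tempered distribution $x^{p_j}_+\ln^k x$ restricts to $x^{p_j}\ln^k x$ on $\R_+$.
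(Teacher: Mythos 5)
Your proposal is correct and follows essentially the same route as the paper: split $u$ via a cut-off into $(1-\omega)u$, the finitely many singular terms $a_{jk}x^{p_j}\ln^k x$ extended to tempered distributions $x^{p_j}_+\ln^k x$ by the Riesz method (legitimate since $\PP$ is appropriate, so no $p_j$ is a negative integer), and a remainder $O(x^{\textup{Re}\, p_{N+1}})$ extended by zero for $N$ large. Your additional verification that the construction is independent of $N$ and of $\omega$ is a worthwhile refinement the paper leaves implicit, but it does not change the underlying argument.
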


\begin{proof}
 Given  a cut-off function $\omega$, there is a  sequence  $\{a_{jk}\}$  of complex numbers such that
 $\exists N\in \N$,
\begin{equation*}
 \omega(x)\big(u-\sum\limits_{j=0}^{N}\sum\limits_{k=0}^{m_j}a_{jk}x^{p_j}\ln^k x\big)=O(x^{\textup{Re}\, p_{N+1}  }).
\end{equation*}
 may be extended by zero for $N$ large enough. Moreover, equation (\ref{Eq.3.Hormander}) yields the extension $x^a_+\ln x$  of $x^a\ln x$ to $\R$ by zero. Finally, we obtain
 $$e_+u(x)=\omega(x) \big( \sum\limits_{j=0}^{N} c_jx^{d+j}_+ +O(x_+^{\textup{Re}\, p_{N+1} }  ) \big) +(1-\omega)(x)u(x).$$
\end{proof}

Let $\PP$ be an \textit{appropriate} power sets. For $u\in \mathcal{S}_{\PP}(\R_+)$, $e_+u$ defines a functions on $\R$ with support in $\overline{\R}_+.$ Applying Proposition \ref{extension.poly} we can set the following definition.

\begin{defin}\label{def.gen.W-H}
Let $\PP\in \uP$ be an appropriate power sets and let \index{$\mathcal{H}^{\pm}_{\PP}$}
\[
 \mathcal{H}^{\pm}_{\PP}=\{ \mathcal{F}[e_\pm u](\xi\mp i0)\ |\ u\in \mathcal{S}_{\PP}(\R_\pm)\}.
\]
Let us  further set $\mathcal{H}_\PP=\mathcal{H}^{+}_\PP\oplus \mathcal{H}^{-}_\PP$.
\end{defin}

\begin{lemma}\label{lemma:polyhomHd}

Taking Fourier transform in Lemma \ref{Lemma.subsets}, it follows that for any appropriate power sets $\PP=\{(p_j,m_j)\}\in\uP$, we have
\[
 \mathcal{H}^\pm=\mathcal{H}^{\pm}_{\PP_0} \subset \mathcal{H}^{\pm}_\PP\subset \mathcal{H}^{\prime,\pm},
\]
where $\PP_0=\{(j,0)\ |\ j=0,1,\cdots \}.$
 Moreover, we have
$$\mathcal{H}^{\pm}_\PP\subset C^\infty_{\PP'}(\R)$$  where
 $\PP'\in\oP$ is a convenient power sets, which contains $\{(-p_j-1, m_j-i)\ |\ i,j=0,1,\cdots\}$ as subset.
\end{lemma}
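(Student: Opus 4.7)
\textbf{Proof plan for Lemma \ref{lemma:polyhomHd}.} The plan has two parts matching the two assertions of the lemma. The first part is essentially formal and follows by transport. The second part requires actually computing asymptotics at infinity for Fourier transforms of log-polyhomogeneous singularities at zero, and this is where the real work lies.

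For the chain $\mathcal{H}^\pm=\mathcal{H}^\pm_{\PP_0}\subset\mathcal{H}^\pm_{\PP}\subset\mathcal{H}^{\prime,\pm}$, I would simply apply the map $u\mapsto \mathcal{F}[e_{\pm}u](\xi\mp i0)$ to the chain
\[
 \mathcal{S}(\overline{\R}_\pm)=\mathcal{S}_{\PP_0}(\R_\pm)\subset\mathcal{S}_{\PP}(\R_\pm)\subset\mathcal{S}(\R_\pm)
\]
supplied by Lemma \ref{Lemma.subsets}. Proposition \ref{extension.poly} guarantees that the extension operator $e_\pm$ makes sense on each of these spaces, while the equality $\mathcal{H}^\pm=\mathcal{H}^\pm_{\PP_0}$ and the inclusion $\mathcal{H}^\pm_{\PP}\subset\mathcal{H}^{\prime,\pm}$ are then simply the definitions of the spaces $\mathcal{H}^\pm$ and $\mathcal{H}^{\prime,\pm}$ as Fourier images of the endpoints of the chain.

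For the second assertion $\mathcal{H}^{\pm}_\PP\subset C^\infty_{\PP'}(\R)$, I would fix $u\in\mathcal{S}_{\PP}(\R_+)$ and a cut-off function $\omega$, and for each integer $N$ decompose
\[
 e_+u(x)=\omega(x)\sum_{j=0}^{N}\sum_{k=0}^{m_j}a_{jk}\,x^{p_j}_+\ln^k x_+ \,+\, R_N(x),
\]
where by Proposition \ref{extension.poly} and Definition \ref{Def.Conormal.Sing.0} the remainder $R_N$ lies in $\mathcal{S}'(\overline{\R}_+)$ and is smooth of class $C^{\lfloor\textup{Re}\,p_{N+1}\rfloor}$ with rapid decay at infinity. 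Taking Fourier transform and invoking Corollary \ref{Fourier.transform.log-homogeneous} for each of the finitely many singular summands yields
\[
 \mathcal{F}[e_+u](\xi)=\sum_{j=0}^{N}\sum_{k=0}^{m_j}\sum_{i=0}^{k}\bigl(c^+_{jki}\,\xi^{-p_j-1}_{+}\ln^{k-i}\xi+c^-_{jki}\,\xi^{-p_j-1}_{-}\ln^{k-i}\xi\bigr)+\mathcal{F}[R_N](\xi)+O([\xi]^{-\infty}).
\]
This already produces, term by term, the pairs $(-p_j-1,\,m_j-i)$ that must belong to~$\PP'$, for $i=0,\dots,m_j$.

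The main obstacle is controlling the remainder $\mathcal{F}[R_N]$ uniformly enough to promote the partial expansion above into a genuine asymptotic expansion at infinity. The plan here is to use integration by parts $\lfloor\textup{Re}\,p_{N+1}\rfloor$ times, which is allowed because the regularity of $R_N$ grows with $N$, to obtain
\[
 \bigl|\mathcal{F}[R_N](\xi)\bigr|=O\bigl([\xi]^{-\lfloor\textup{Re}\,p_{N+1}\rfloor}\bigr),
\]
and then to let $N\to\infty$ using $\textup{Re}\,p_{N+1}\to\infty$. A routine compatibility check shows that the coefficients $c^\pm_{jki}$ produced at different truncation levels~$N$ agree, so we obtain a single log-polyhomogeneous asymptotic expansion in the sense of Definition \ref{def.sim.infty}. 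Taking the distributional boundary value $\xi\mp i0$ does not affect this expansion (away from zero the boundary value coincides with the smooth function just computed), so $\mathcal{F}[e_+u](\xi-i0)\in C^\infty_{\PP'}(\R)$ with $\PP'\in\oP$ containing $\{(-p_j-1,\,m_j-i)\mid i,j\in\N\}$ as claimed; the same argument with $e_-$ and the opposite boundary value covers the minus case.
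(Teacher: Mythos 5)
Your proposal is correct and follows essentially the same route as the paper: the inclusion chain is obtained by transporting Lemma \ref{Lemma.subsets} through $u\mapsto\mathcal{F}[e_\pm u]$, and the membership $\mathcal{H}^\pm_\PP\subset C^\infty_{\PP'}(\R)$ is proved by the same cut-off decomposition into finitely many log-homogeneous terms (handled by Corollary \ref{Fourier.transform.log-homogeneous}), the Schwartz tail $(1-\omega)u$, and a remainder whose Fourier transform decays at a rate growing with $N$. Your integration-by-parts justification of the remainder estimate and the explicit coefficient-compatibility check merely make precise what the paper asserts in its item 4 without proof.
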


\begin{proof}
It just remain to show
$ u\in \mathcal{S}_\PP (\R_\pm)\Longrightarrow \mathcal{F}[e_\pm u]\in C^\infty_{\PP'}(\R)$
 for some power sets $\PP'$ containing $\{(-p_j-1, m_j-i)\ |\ i,j\in\N\}$ as subset.
 Let $\PP$ be an appropriate power sets and let $\omega$ be a cut-off function and $u\in \mathcal{S}_\PP(\R_+).$ There exists $N\in\N$ such that
\begin{equation*}
 u(x)=\omega \big( \sum\limits_{j=0}^{N}\sum\limits_{k=0}^{m_j}a_j x^{p_j}\ln^k |x|+(1-\omega)u(x)+ O(x^{ \textup{Re}\, p_{N+1} }) \big).
\end{equation*}
 Therefore, the Fourier transform of $u$ is
\begin{equation*}
 \mathcal{F}[e_+u](\xi)=\sum\limits_{j=0}^{N}\sum\limits_{k=0}^{m_j}a_j \mathcal{F}[\omega x^{p_j}_+\ln^k |x|]
 +\mathcal{F}[(1-\omega)u(x)]+\mathcal{F}[O(x^{ \textup{Re}\, p_{N+1}   }_+)].
\end{equation*}
The conclusion follows for combining the following facts:
\begin{enumerate}
 \item The term $\mathcal{F}[(1-\omega)u]$ is a continuous and rapidly decreasing function since $(1-\omega)u$ is a Schwartz function.
\item
 $\mathcal{F}[x_\pm^{p_j}\ln^k x]=\mathcal{F}[e_\pm x_\pm^{p_j}\ln^k x]$ is well-defined since $p_j\notin \Z_{<0}\ j\in\N$.
\item
 From Corollary \ref{Fourier.transform.log-homogeneous}, we have
\begin{eqnarray}\label{Eq:Fourier.eq}
 \mathcal{F}[\omega  x^{p_j}_\pm \ln^k |x|](\xi)&=&
\sum\limits_{j=0}^{k}c^+_{j} \xi^{-p_j-1}_+\ln^{k-j}| \xi|+\sum\limits_{j=0}^{k}c^-_{j} \xi^{-p_j-1}_-\ln^{k-j} |\xi |\nonumber\\
&&+O([\xi]^{-\infty}).
\end{eqnarray}
\item Finally, the Fourier transform of the term $O(x^{ \textup{Re}\, p_{N+1}     })$ is a function of the form $O([\xi]^{ -\textup{Re}\, p_{N+1} -1})$.
\end{enumerate}
 From remark \ref{Remark.Indep.Cut-off}, $\mathcal{F}[\omega x_\pm^{p_j}\ln^k x]$
 is actually independent of the choice of the cut-off function $\omega$ modulo continuous rapidly decreasing functions.
\end{proof}


\begin{example}
 Let $d\in \C\setminus \Z_{<0}$. If $u(x)$ denotes the function defined as $x^d e^{-x}$ if $x>0$ and $0$ if $x\leq 0$ we
 have $u\in \mathcal{S}_P(\R_+)$ for $\PP=\{(d+j,0)\ |\ j\in\N\},$ more precisely,
\[ u(x)\sim\sum_{j=0}^{\infty} (-1)^j x^{d+j}\text{ as } x\rightarrow 0^+.\]
 Hence there exists a sequence a complex number $a_0,a_1,\cdots ,$ such that $$\mathcal{F}[u](\xi)\sim\sum_{j=0}^\infty a_j (\xi-i0)^{-d-j-1}\; \text{ as }\, |\xi|\rightarrow \infty.$$
\end{example}

\begin{prop}\label{prop.analytic.extension}
 Any function in $ \mathcal{H}^{\pm}_\PP$ for  an appropriate power set $\PP\in\uP$, can be represented by an analytic function on $\C_{\mp}$
 that extends continuously to $\overline{\C}_\mp$ in $\mathcal{S}^\prime(\R)$.
\end{prop}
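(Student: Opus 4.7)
The plan is to construct the holomorphic representative as the Laplace--Fourier transform of $e_+u$. Given $u\in \mathcal{S}_{\PP}(\R_+)$, Proposition \ref{extension.poly} provides an extension $e_+u\in \mathcal{S}'(\overline{\R}_+)$, a tempered distribution with support in $\overline{\R}_+$. Following the definition recalled in equation~(\ref{Eq.analytic.Fourier}), I set
\[
F(\zeta) := \la e_+u,\, e^{-ix\zeta}\ra_{\mathcal{S}',\mathcal{S}},\qquad \zeta=\xi+i\eta\in\C_-.
\]
This pairing is legitimate because, for $\eta<0$, the factor $e^{x\eta}$ decays exponentially on the support $\overline{\R}_+$ of $e_+u$: by the structure theorem one writes $e_+u=D^m f$ with $f$ continuous of polynomial growth and supported in $\overline{\R}_+$, so that $F(\zeta)=(-i\zeta)^m \int_0^\infty e^{-ix\zeta} f(x)\,dx$ is an absolutely convergent integral.

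First I would verify that $F$ is holomorphic on $\C_-$ by differentiating under the distributional pairing: the family $\zeta\mapsto e^{-ix\zeta}$ is entire and its antiholomorphic derivative vanishes, yielding $\partial_{\bar\zeta} F(\zeta)=\la e_+u,\, \partial_{\bar\zeta} e^{-ix\zeta}\ra=0$ in the sense of difference quotients in $\mathcal{S}(\overline{\R}_+)$.

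Next I would prove that $F(\cdot+i\eta)\to \mathcal{F}[e_+u]$ in $\mathcal{S}'(\R)$ as $\eta\to 0^-$. For $\phi\in\mathcal{S}(\R)$, Fubini combined with the definition of the Fourier transform yields
\[
\la F(\cdot+i\eta),\,\phi\ra \;=\; \sqrt{2\pi}\,\la e_+u(x),\, e^{x\eta}\,\mathcal{F}[\phi](x)\ra.
\]
The elementary bound $|e^{x\eta}-1|\le |\eta|\,x$ for $\eta<0$, $x\ge 0$, together with analogous bounds on the $x$-derivatives, shows that $e^{x\eta}\mathcal{F}[\phi]|_{\overline{\R}_+}\to \mathcal{F}[\phi]|_{\overline{\R}_+}$ in $\mathcal{S}(\overline{\R}_+)$; the continuity of $e_+u$ on that space then yields $\la F(\cdot+i\eta),\phi\ra\to \la\mathcal{F}[e_+u],\phi\ra$. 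Combined with the continuity of $\eta\mapsto F(\cdot+i\eta)$ on $(-\infty,0)$ inherited from the holomorphy of $F$, this gives continuity of the map $(-\infty,0]\to \mathcal{S}'(\R)$, $\eta\mapsto F(\cdot+i\eta)$, as required.

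The main obstacle will be upgrading the pointwise-in-$\phi$ convergence above to convergence in the $\mathcal{S}'$ topology (equicontinuity on bounded subsets of $\mathcal{S}(\R)$). To handle this efficiently I would complement the Laplace--Fourier construction with the explicit decomposition from Lemma \ref{lemma:polyhomHd}: one writes $\mathcal{F}[e_+u]$ as a finite sum of polyhomogeneous pieces of the form $c^\pm_j \xi^{-p_j-1}_\pm \ln^{k-j}|\xi|$ (cf.\ Corollary \ref{Fourier.transform.log-homogeneous}) plus a rapidly decreasing remainder. Each polyhomogeneous summand is, by the construction recalled after Proposition \ref{Thm.3.2.1.HI}, the $\mathcal{S}'$-boundary value of the explicit holomorphic function $c^\pm_j\, z^{-p_j-1}\ln^{k-j} z$ on $\C\setminus\R_-\supset \C_-$, which transparently extends continuously to the real axis in $\mathcal{S}'(\R)$; the rapidly decreasing remainder is itself the Fourier transform of a Schwartz function supported in $\overline{\R}_+$, which extends analytically via the Laplace--Fourier transform with Schwartz boundary value. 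Assembling these ingredients yields the holomorphic extension on $\C_-$ continuous to $\overline{\C}_-$ in $\mathcal{S}'(\R)$.
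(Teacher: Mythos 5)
Your argument is correct, and its second half --- decomposing $\mathcal{F}[e_+u]$ via the log-polyhomogeneous expansion into the pieces $\mathcal{F}[\omega x^{p_j}_+\ln^k x]$, a remainder $\mathcal{F}[O(x^{\textup{Re}\,p_{N+1}})]$, and the Schwartz tail $\mathcal{F}[(1-\omega)u]$, each represented by an explicit holomorphic function on $\C_-$ --- is precisely the paper's proof (the paper represents the log-homogeneous pieces by applying the Laplace--Fourier transform (\ref{Eq.analytic.Fourier}) to the half-line supported distributions $x^{p_j}_+\ln^k x$ rather than by quoting the explicit formula $c\,\zeta^{-p_j-1}\ln^{k-j}\zeta$, but these are the same thing). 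What you add, and what the paper does not do, is the direct global construction $F(\zeta)=\la e_+u,\,e^{-ix\zeta}\ra$ on $\C_-$: this uses only that $e_+u$ is a tempered distribution supported in $\overline{\R}_+$ (cf.\ Proposition \ref{WP-thm.tempred.distributions}) and never touches the asymptotic expansion, so it is both more elementary and more general; the price is that it reveals nothing about the structure of the boundary value, which is why the paper's decomposition is the one reused in Theorem \ref{Type Paley-Wiener.thm's}. Two small points in your direct argument: the pairing $\la e_+u, e^{-ix\zeta}\ra$ needs a cutoff equal to $1$ near $\supp e_+u$ to land in $\mathcal{S}(\R)$ (as $e^{-ix\zeta}$ blows up for $x\to-\infty$ when $\textup{Im}\,\zeta<0$), and your worry about upgrading pointwise convergence to convergence in $\mathcal{S}'(\R)$ is moot --- weak-$\ast$ convergence is all the statement requires, and for the strong topology Banach--Steinhaus does the upgrade automatically --- so the fallback to the decomposition is not actually needed to close a gap; it is simply the paper's route.
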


\begin{proof}
 Let $\PP$ be an appropriate power sets and $\omega$ be a cut-off function and $u\in \mathcal{S}_\PP(\R_\pm)$. There exists $N\in\N$ such that
\begin{equation}\label{eq:decomposition}
 \mathcal{F}[e_\pm  u](\xi)=\sum\limits_{j=0}^{N}\sum\limits_{k=0}^{m_j}a_j \mathcal{F}[\omega x^{p_j}_\pm \ln^k| x|]+\mathcal{F}[O(x^{ \textup{Re}\, p_{N+1}     })]
 +\mathcal{F}[(1-\omega)u(x)].
\end{equation}
 We  want to prove that each term in (\ref{eq:decomposition}) can be represented by an analytic function on $\C_{\mp}$. We first note that from Remark \ref{Remark.Indep.Cut-off} it follows that the assertion is independent (modulo analytic function) of the cut-off function used in the proof. Next, we see that the proof follows from the following observations:
\begin{enumerate}
\item
 The assertion is valid for term $\mathcal{F}[\omega x^{p_j}_\pm\ln^k x]$
 since $e_\pm( x^{p_j} \ln^k x)=x^{p_j}_\pm \ln^k x\in\mathcal{S}^\prime(\R)$ has support in $[0,\infty)$
 (resp.$(-\infty, 0]$) and Equation (\ref{Eq.analytic.Fourier}).
\item
 The assertion for the second and third terms follows from Proposition \ref{Paley-Wiener.thm.one.sided} and the
 fact that the Fourier transform of $(1-\omega)(x)u(x)$ has order $O([\xi]^{-\infty})$, respectively.
\end{enumerate}
\end{proof}

We are now ready to characterize functions in $\mathcal{H}^{\prime,\pm}_\PP$, for appropriate power sets $\PP\in\uP$. Recall  $\mathcal{H}^\pm=\mathcal{H}^{\pm}_{\PP_0} \subset \mathcal{H}^{\pm}_\PP\subset \mathcal{H}^{\prime,\pm},$ where $\PP_0=\{(j,0)\ |\ j=0,1,\dots \}.$ 

\begin{thm}\label{Type Paley-Wiener.thm's}
 Let $\PP\in\uP$ be an appropriate power set.  A function $h(\xi)$ lies in $  \mathcal{H}^{+}_\PP$ (resp. $  \mathcal{H}^{-}_\PP$) if and only if
\begin{itemize}
\item it can be represented by an analytic  function $h$ on $\C_{-} $ (resp. on $\C_{+}$) that extends in $\mathcal{S}^\prime(\R)$ continuously to
 $\overline{\C}_-$ (resp.  $\overline{\C}_+$)  by $h(\xi- i 0)$ (resp. $h(\xi+ i 0)$),
\item $h$ has the following growth at infinity $\forall \zeta=\xi\mp    i\eta\in \C_{\mp},\exists  m\in\N, \exists C>0$ such that
 \begin{equation*}
 |h(\zeta)|\leq C_n  \la  \zeta\ra^m 
\end{equation*}
 \item    Moreover, if $\PP$ does not contain logarithmic indices, i.e. $\PP$ has the form $\{(p_j,0)\, |\, j\in\N\}$, then  $h(\xi- i 0)$ has an asymptotic expansion as $|\xi|\rightarrow\infty$
\begin{eqnarray}\label{Eq:Transm.symmetry.cond}
h(\xi- i 0)\sim \sum\limits_{j=0}^{\infty}c_{j} (\xi- i0)^{-p_j-1} =\sum\limits_{i=0}^{\infty}   c^+_j\,  \xi^{-p_j-1}_++c^-_j\, \xi_-^{-p_j-1},
\end{eqnarray}
where $c_j$ are described in (\ref{eq:decomposition.2}), $c^-_j=e^{i (p_j+1)\pi}c^+_j$ for $j=0,1,\dots$, and
$$h(\xi+ i 0)\sim \sum\limits_{j=0}^{\infty}c_{j} (\xi+ i0)^{-p_j-1} =\sum\limits_{i=0}^{\infty}   c^+_j\,  \xi^{-p_j-1}_++c^-_j\, \xi_-^{-p_j-1},$$
with  $c^-_j=e^{-i (p_j+1)\pi}c^+_j$ for $j=0,1,\dots$.
{More  generally}, allowing logarithmic powers $\PP=\{(p_j,m_j\}$, the asymptotic expansion of $h(\xi\mp i 0)$ reads
\begin{eqnarray*}
h(\xi\mp i 0)&\sim& \sum\limits_{i,j=0}^{\infty}\sum\limits_{k=0}^{m_j}a_{ijk} (\xi\mp i0)^{-p_j-1} \ln^{k-i} (\xi\mp i0)\\
&=& \sum\limits_{i,j=0}^{\infty}\sum\limits_{k=0}^{m_j}   a^{+}_{ijk}\xi^{-p_j-1}_+ \ln^{k-i} (\xi\mp i0)       +b^{+}_{ijk}\xi^{-p_j-1}_- \ln^{k-i} (\xi\mp i0)
\end{eqnarray*}
for some complex numbers $b^+_{ijk}$ an $b^-_{ijk}$ for $i,j=0,1,\cdots$ and $k=0,1,\cdots m_j$.
\end{itemize}
\end{thm}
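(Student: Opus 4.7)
The plan is to establish the two implications separately, using the decomposition of elements of $\mathcal{S}_{\PP}(\R_+)$ into a log-polyhomogeneous singular part near $0$, a tail near $0$, and a Schwartz piece away from $0$, and matching this against the asymptotic decomposition at infinity on the Fourier side via the explicit formulas of Section \ref{SS:HomoD}.

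For the forward direction, I would start from $u \in \mathcal{S}_{\PP}(\R_+)$ and pick a cut-off $\omega$ near zero to write, for $N$ large,
\begin{equation*}
u(x) = \omega(x) \sum_{j=0}^{N} \sum_{k=0}^{m_j} a_{jk}\, x^{p_j}\ln^k x \;+\; \omega(x) r_N(x) \;+\; (1-\omega)(x) u(x),
\end{equation*}
with $r_N(x) = O(x^{\textup{Re}\, p_{N+1}})$. Applying $e_+$ (Proposition \ref{extension.poly}) and taking Fourier transforms, the first sum yields, by Corollary \ref{Fourier.transform.log-homogeneous}, the leading log-polyhomogeneous terms in $(\xi \mp i0)^{-p_j-1}\ln^{k-i}(\xi\mp i0)$ plus a $O([\xi]^{-\infty})$ remainder; the second term contributes $O([\xi]^{-\textup{Re}\, p_{N+1}-1})$ (so can be absorbed into the next order of the asymptotic expansion as $N$ grows), and the third is Schwartz, hence $O([\xi]^{-\infty})$. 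Since $e_+u \in \mathcal{S}'(\overline{\R}_+)$ has support in $[0,\infty)$, Equation (\ref{Eq.analytic.Fourier}) gives the holomorphic extension of $\mathcal{F}[e_+u]$ to $\C_-$; polynomial growth on $\C_-$ follows from the temperedness of $e_+u$ (via its order and the pairing with $e^{-ix\zeta}$, which is bounded on $\textup{supp}(e_+u)$ when $\textup{Im}\, \zeta < 0$); and the distributional convergence as $\zeta \to \xi - i0$ in $\mathcal{S}'(\R)$ is precisely the definition of the boundary value via Proposition \ref{Thm.3.2.1.HI}. The relations $c^-_j = e^{i(p_j+1)\pi} c^+_j$ in the non-logarithmic case come directly from $(\xi - i 0)^a = \xi^a_+ + e^{-ai\pi}\xi^a_-$ in Equation (\ref{3.2.9.HI}) with $a = -p_j - 1$.

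For the converse, given $h$ analytic on $\C_-$ of polynomial growth with boundary value $h(\xi-i0) \in \mathcal{S}'(\R)$ and with the prescribed asymptotic expansion, I would set $u := \mathcal{F}^{-1}[h(\cdot - i0)] \in \mathcal{S}'(\R)$. The Paley–Wiener theorem for tempered distributions (Proposition \ref{WP-thm.tempred.distributions} in \S\ref{SS:WP-thm.tempred.distributions}) forces $\textup{supp}\, u \subset [0,\infty)$, so $u = e_+ u_0$ for some $u_0$ on $\R_+$. To recover the log-polyhomogeneous expansion of $u_0$ at $0$, I would apply $\mathcal{F}^{-1}$ term by term to the asymptotic expansion of $h(\xi - i0)$ using Proposition \ref{Inverse.Fourier.transform.homogeneous}, which gives $\mathcal{F}^{-1}[(\xi-i0)^{-p_j-1}] = c\, x^{p_j}_+$ for suitable constants (and analogous formulas with logarithms by differentiation in $p_j$), while the remainders $O(|\xi|^{-\textup{Re}\,p_{N+1}-1}\ln^{\cdot}|\xi|)$ invert to functions of regularity increasing with $N$. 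Smoothness of $u_0$ on $\R_+$ and rapid decay at $\infty$ come from the polynomial growth of $h$ on any horizontal line inside $\C_-$ (using $\mathcal{F}^{-1}$ applied to $\xi \mapsto h(\xi - i\eta)$ for $\eta > 0$ and the standard argument that shifting the contour picks up an exponentially damped factor $e^{-\eta x}$ for $x > 0$).

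The main obstacle I expect is bookkeeping: reconciling the two-sided structure $\xi^{-p_j-1}_\pm \ln^{k-i}|\xi|$ appearing naturally in Corollary \ref{Fourier.transform.log-homogeneous} with the one-sided boundary values $(\xi\mp i0)^{-p_j-1}\ln^{k-i}(\xi\mp i0)$ required in the statement, and doing so uniformly in the logarithmic index when $m_j \geq 1$. This requires carefully differentiating the identity $(\xi \mp i0)^a = \xi^a_+ + e^{\mp a i\pi}\xi^a_-$ with respect to $a$ (which introduces $\ln(\xi \mp i 0) = \ln|\xi| \mp i\pi H(-\xi)$ on each side), and then checking that the resulting coefficients $a^+_{ijk}, b^+_{ijk}$ match those produced by the forward direction, so that the two maps $u \mapsto \mathcal{F}[e_+u]$ and $h \mapsto \mathcal{F}^{-1}[h(\cdot-i0)]$ are genuine inverses of one another on the spaces identified.
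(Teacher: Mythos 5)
Your proposal is correct and follows essentially the same route as the paper: the forward direction uses the identical cut-off decomposition of $u$ into singular part, tail, and Schwartz piece, together with Proposition \ref{Fourier.transform.homogeneous}/Corollary \ref{Fourier.transform.log-homogeneous} and the relation (\ref{3.2.9.HI}) for the coefficient identities, while the converse inverts the expansion term by term via Proposition \ref{Inverse.Fourier.transform.homogeneous}. Your treatment of the converse is in fact somewhat more complete than the paper's (which is quite terse there), since you make explicit the support localization via Proposition \ref{WP-thm.tempred.distributions} and the smoothness/rapid-decay argument from the symbol-type estimates on $h$.
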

\begin{proof}
 For $u\in \mathcal{S}_\PP(\R_+)$ set $H=\mathcal{F}[e_\pm  u]\in \mathcal{H}^{\pm}_\PP$. 
 We known from (\ref{Eq.analytic.Fourier}) $H$ can be represented as
 an  analytic function $h$ on $\C_\mp$, and that extends continuously to $\overline{\C}_\mp$. By the maximum principle it follows that $|h(\zeta)|\leq C_n \la \zeta\ra^m$ for $|\zeta|>1$ and $\textup{Im}\, \zeta\geq 0$. First, let us consider asymptotic expansion of $u$ with not log terms. Now, to prove that $H$ has an asymptotic expansion at infinity (\ref{Eq:Transm.symmetry.cond}), let  $u\sim \sum a_jx _+^{p_j}\in \mathcal{S}_{\PP}(\R_+)$, i.e.  Let  $\omega$ be a cut-off function and $u\in \mathcal{S}_\PP(\R_\pm)$. There exists $N\in\N$ such that
\begin{equation*}
 u(x)=\omega \big( \sum\limits_{j=0}^{N}\sum\limits_{k=0}^{m_j}a_j x^{p_j}+ O(x^{ \textup{Re}\, p_{N+1}     }) \big)+(1-\omega)u(x).
\end{equation*}
It follows from Proposition \ref{Fourier.transform.homogeneous} and $ (\xi\pm i0)^a=\xi^a_++e^{\pm a\pi i}\xi_-^a$  that
\begin{eqnarray}\label{eq:decomposition.2}
 \mathcal{F}[e_+  u](\xi)&=&\sum\limits_{j=0}^{N}a_j \mathcal{F}[\omega x^{p_j}_+ ]+\mathcal{F}[O(x^{ \textup{Re}\, p_{N+1}     })] +\mathcal{F}[(1-\omega)u(x)] \nonumber\\
 &=&\sum\limits_{j=0}^{N}a_j\, \frac{\Gamma(p_j+1)}{\sqrt{2\pi}}e^{- i\pi(p_j+1)/2}(\xi- i0)^{-p_j-1}\\
 &&+O(x^{- \textup{Re}\, p_{N+1}-1     })+O([\xi]^{-\infty})\nonumber\\
&\sim&\sum\limits_{j=0}^{N}a_j\, \frac{\Gamma(p_j+1)}{\sqrt{2\pi}}e^{- i\pi(p_j+1)/2}(\xi^{-p_j-1}_++e^{i (p_j+1)\pi}\xi_-^{-p_j-1}).\nonumber
\end{eqnarray}
Let us prove the converse assertion. Let $h$ be an  analytic function which satisfies the  enumerated  properties in Theorem \ref{Type Paley-Wiener.thm's}. 
From Proposition \ref{Inverse.Fourier.transform.homogeneous}, we have $H$ lies in $\mathcal{H}^{\pm}_\PP$ for some $\PP\in\uP$.
Now, we consider the following asymptotic expansion
\begin{equation*}
 u(x)=\omega \big( \sum\limits_{j=0}^{N}\sum\limits_{k=0}^{m_j}a_j x^{p_j}\ln^k x+ O(x^{ \textup{Re}\, p_{N+1}     }) \big)+(1-\omega)u(x).
\end{equation*}
The corresponding assertion for logarithmic terms follows from (\ref{Eq:Fourier.eq}).
\end{proof}


Now, from Theorem \ref{Type Paley-Wiener.thm's} we deduce well-known Paley-Wiener type theorems concerning to the Fourier transform of rapidly decreasing smooth function with positive support which are smooth up to the boundary (see \cite{RS}, Section 2.1.1.1. Corollary 3; also  \cite{BdM}). We recall that \cite{RS} the space $\mathcal{S}(\overline{\R}_+)$ is a Fr\'echet space equipped with the countable family of semi-norms, for $m,k\in\N$, $ \|u\|_{m,k}:=\sup_{x\in\R_+}|x^k\partial^m u(x)|$.  If we set
\begin{equation*}
 \mathcal{H}^{\pm}:=\mathcal{H}^{\pm}_{\PP_0}=\{  \mathcal{F}[e_\pm u](\xi)\ |\ u\in \mathcal{S}(\overline{\R}_\pm)\}.
\end{equation*}
The range $\mathcal{H}^+$ equipped with the image topology of $\mathcal{S}(\overline{\R}_+)$, i.e. the countable family of semi-norms given by
\begin{equation*}
 \|| h_+\||_{m,k}=\|\xi^k\partial^m_{\xi}h_+(\xi)\|_{L^2}, \text{ for }m,k\in\N,
\end{equation*}
is a Frech\'et space, and we have the following result.
 
\begin{lemma}
  The map
\begin{equation*}
 \mathcal{F}:e_+\mathcal{S}(\overline{\R}_+)\rightarrow \mathcal{H}^+
\end{equation*}
 is an isomorphism of Fr\'echet spaces. We denote by $\mathcal{F}^{-1}$ its inverse.
\end{lemma}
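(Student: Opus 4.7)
The plan is to deduce the Fréchet isomorphism from three ingredients: bijectivity of $\mathcal{F}$ onto $\mathcal{H}^+$, continuity of $\mathcal{F}$ and $\mathcal{F}^{-1}$ at the ambient level of $\mathcal{S}'(\R)$, and verification that the concrete seminorm family listed above generates the image topology declared on $\mathcal{H}^+$.

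First I would establish bijectivity of the map. Surjectivity is immediate from the very definition $\mathcal{H}^+ := \mathcal{F}(e_+\mathcal{S}(\overline{\R}_+))$. Injectivity follows from two observations: the extension-by-zero map $e_+ \colon \mathcal{S}(\overline{\R}_+) \to \mathcal{S}'(\R)$ is one-to-one (distinct continuous functions on $\overline{\R}_+$ produce distinct tempered distributions), and the Fourier transform is an isomorphism of $\mathcal{S}'(\R)$, hence injective. Composing gives $\mathcal{F}\circ e_+$ injective. Once bijectivity is in hand, the topological isomorphism property is essentially tautological when $\mathcal{H}^+$ carries the image topology under $\mathcal{F}\circ e_+$; the inverse is the restriction to $\mathcal{H}^+$ of $\mathcal{F}^{-1} \colon \mathcal{S}'(\R) \to \mathcal{S}'(\R)$ followed by the restriction map $r_+$ to $\R_+$, both continuous.

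To justify that the concrete seminorms $\|\xi^k \partial^m_\xi h\|_{L^2}$ do describe the image topology, I would invoke Plancherel. Writing $h = \mathcal{F}[e_+u]$ and using the standard Fourier intertwining relations, one obtains
\[
\xi^k \partial^m_\xi h = (-i)^{m+k}\, \mathcal{F}\bigl[\partial^k_x(x^m e_+u)\bigr],
\]
so that $\|\xi^k\partial^m_\xi h\|_{L^2(\R)} = \|\partial^k_x(x^m e_+u)\|_{L^2(\R)}$. This should then be compared with the $L^2$-version of the defining sup-seminorms $\|u\|_{m,k} = \sup_{x\in\R_+}|x^k \partial^m u(x)|$ on $\mathcal{S}(\overline{\R}_+)$, equivalence following from the one-dimensional Sobolev embedding. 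The main obstacle is precisely this last comparison: $\partial^k_x(x^m e_+u)$ picks up boundary contributions $u^{(j)}(0)\,\delta_0^{(\ell)}$ arising from the jump of $e_+u$ at the origin whenever $k>m$, and such distributions are not in $L^2(\R)$, so the seminorm $\|\xi^k \partial^m_\xi h\|_{L^2}$ is actually infinite in that range. Resolving this point — either by restricting the indexing set to $k\leq m$, or by reinterpreting the seminorms so as to absorb the jump contributions, consistent with the asymptotic expansion $h \sim \sum c_j \xi^{-j-1}$ guaranteed by Theorem~\ref{Type Paley-Wiener.thm's} — is the delicate step on which the concrete topological description of $\mathcal{H}^+$ ultimately rests; once it is settled, the Fréchet-isomorphism statement follows.
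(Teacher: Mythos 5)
Your argument follows essentially the same route as the paper's proof: the paper also obtains bijectivity from the Fourier transform on $L^2$ (equivalently, from injectivity of $e_+$ together with injectivity of $\mathcal{F}$ on $\mathcal{S}'(\R)$), and it establishes continuity by exactly the Plancherel identity you write, namely $\|\xi^k\partial_\xi^m h\|_{L^2}=\|\partial_x^k\big(x^m\,e_+u\big)\|_{L^2}$, concluding from convergence of $u_n\to u$ in $\mathcal{S}(\overline{\R}_+)$. The one point where you go beyond the paper is the ``delicate step'' you flag, and you are right to flag it: it is a genuine defect that the paper's one-line computation passes over in silence. Indeed, if $u(0)\neq 0$ then $h(\xi)\sim u(0)(i\xi)^{-1}$ as $|\xi|\to\infty$ (cf.\ the expansion in Proposition \ref{Paley-Wiener.thm.one.sided}), so $\|\xi h\|_{L^2}=\infty$ and the seminorm with $m=0$, $k=1$ is not finite on $\mathcal{H}^+$; on the other side of Plancherel this is the appearance of $u(0)\delta_0$ in $\partial_x(e_+u)$, exactly as you describe. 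The symbol estimates $|\partial_\xi^m h(\xi)|\leq C\la \xi\ra^{-1-m}$ show that the seminorms are finite precisely when the power of the multiplier $\xi$ does not exceed the order of differentiation, so your proposed repair (restricting the index set to $k\leq m$ in the paper's notation) is the correct one, and the comparison with the sup-seminorms on $\mathcal{S}(\overline{\R}_+)$ via the one-dimensional Sobolev inequality then closes the argument. In short: same approach as the paper, carried out more carefully; nothing essential is missing from your proposal beyond committing to the restricted seminorm family, a correction the paper itself would need.
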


\begin{proof}
 The bijectivity follows from bijectivity of the Fourier transform acting over the space $L^2$. To see that $\mathcal{F}$ is continuous it suffices to show that $h_n(\xi)\rightarrow  h(\xi)$ as ${n\rightarrow \infty}$ in the Fr\'echet topology of $\mathcal{H}^+$ with $h_n(\xi):=\mathcal{F}[e_+u_n](\xi)$, and $h(\xi):=\mathcal{F}[e_+u](\xi)$ where $(u_n)$  is a sequence in $\mathcal{S}(\overline{\R}_+)$  which converges to $u$ in the Fr\'echet topology of $\mathcal{S}(\overline{\R}_+)$.   
We have, for any $m,k\in\N$, that as  $n\rightarrow \infty$, 
\begin{eqnarray*}
  \|| h_n-h\||_{m,k}= \|\xi^m\partial^k_\xi(\mathcal{F}[u_n]-\mathcal{F}[u](\xi)])\|_{L^2}
  =\| \partial^m_x \big( x^k(u_n(x)-u(x))\big) \|_{L_2}\rightarrow 0.
\end{eqnarray*}
\end{proof}
 
As a consequences of the dominated convergence theorem, we have that for any 
integrable function $f(s,x)$ defined for $(s,x)\in  I\times\R$, where $I$ is an open interval of $\R$, and there exists $\partial f /\partial s$ and $g$ integrable  such that $|\partial f(s,x)/\partial x|\leq g(x)$. Then $F(s)=\int f(s,x)dx$ is differentiable and
$dF/ds=\int df/dsdx$.

\begin{lemma}\label{Paley-WienerProposition} 
 If $u\in \mathcal{S}(\overline{\R}_+)$ (resp. $u\in \mathcal{S}(\overline{\R}_-)$) then $\mathcal{F}[e_\pm u]$ is a
 smooth function which can be represented by an entire function $h(\zeta)$ in $\C_-$
 (resp. $\C_+$) which  extends  continuously to $\overline{\C}_-$ (resp. $\overline{\C}_+$).
 By abuse of notation we shall set $h(\zeta)=\mathcal{F}[e_\pm u](\zeta)$.
\end{lemma}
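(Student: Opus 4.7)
The lemma is essentially Theorem \ref{Type Paley-Wiener.thm's} specialized to the appropriate power set $\PP_0=\{(j,0)\mid j\in\N\}$, combined with the identification $\mathcal{S}(\overline{\R}_+)=\mathcal{S}_{\PP_0}(\R_+)$ provided by Lemma \ref{Lemma.subsets}. The plan is thus to recover the three required properties — smoothness on $\R$, holomorphy on $\C_-$, and continuous extension to the boundary — partly by invoking that theorem and partly by short direct arguments using the rapid decay of $u$.

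First, I would establish smoothness of $\mathcal{F}[e_+u]$ on $\R$ from decay alone: since $u\in\mathcal{S}(\overline{\R}_+)$, the function $x^k e_+u(x)$ lies in $L^1(\R)$ for every $k\in\N$, so differentiation under the integral sign yields
$$\partial_\xi^k\mathcal{F}[e_+u](\xi)=\mathcal{F}\bigl[(-ix)^k e_+u\bigr](\xi)\in C(\R),$$
whence $\mathcal{F}[e_+u]\in C^\infty(\R)$. Note that although $e_+u$ itself is only piecewise smooth (with a possible jump at $0$), this does not obstruct smoothness of the Fourier transform, which is controlled by decay rather than regularity.

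Next, for $\zeta=\xi-i\eta\in\C_-$ I set
$$h(\zeta):=\frac{1}{\sqrt{2\pi}}\int_0^\infty u(x)\,e^{-ix\zeta}\,dx,$$
which converges absolutely because $|e^{-ix\zeta}|=e^{-x\eta}\le 1$ on $\R_+$ and $u$ is rapidly decreasing. Differentiating under the integral with the uniform dominant $|x u(x)|$ on compact subsets of $\C_-$ shows that $\partial_{\bar\zeta} h \equiv 0$, so $h$ is holomorphic on $\C_-$. This coincides with the Laplace--Fourier definition (\ref{Eq.analytic.Fourier}) applied to $e_+u\in\mathcal{S}'(\R)$, whose support lies in $\overline{\R}_+$.

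Finally, I would verify the continuous extension to $\overline{\C}_-$. For any fixed $\xi\in\R$ and $\eta\to 0^+$, the integrand $u(x)e^{-ix\xi}e^{-x\eta}$ converges pointwise to $u(x)e^{-ix\xi}$ with majorant $|u(x)|\in L^1(\R_+)$; dominated convergence gives $h(\xi-i\eta)\to \mathcal{F}[e_+u](\xi)$ pointwise, uniformly on compact $\xi$-sets, hence also in $\mathcal{S}'(\R)$. The case $u\in\mathcal{S}(\overline{\R}_-)$ is handled by the symmetric argument, producing an analytic function on $\C_+$. The only mild subtlety is confirming that the distributional boundary value furnished by Theorem \ref{Type Paley-Wiener.thm's} agrees with the pointwise smooth function $\mathcal{F}[e_+u]$ on $\R$, and this is exactly what the dominated convergence step just described provides.
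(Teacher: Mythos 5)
Your proposal is correct and follows essentially the same route as the paper: define $h(\zeta)=\int_0^\infty e^{-ix\zeta}u(x)\,\normd x$ on $\C_-$, observe absolute convergence from $|e^{-ix\zeta}|\le 1$ and the rapid decay of $u$, and obtain holomorphy by differentiating under the integral (Cauchy--Riemann). Your treatment is in fact slightly more complete, since you make the continuous extension to $\overline{\C}_-$ explicit via dominated convergence, a step the paper leaves implicit; just be careful not to actually lean on Theorem \ref{Type Paley-Wiener.thm's} here, as its proof ultimately depends on this lemma through Proposition \ref{Paley-Wiener.thm.one.sided} --- your direct arguments already suffice.
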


\begin{proof}
 Let $u\in \mathcal{S}(\overline{\R}_+)$. For $\zeta=\xi-i\eta$ with $\xi\in\R$, $\eta<0$. The integral (cf. (\ref{Eq.analytic.Fourier}))
\begin{equation*}
  h(\zeta)=\int_0^\infty e^{ix(\xi-i\eta)}u(x)\normd\, x=\int_0^\infty e^{ix\xi} (e^{x\eta}u(x))\normd\, x
\end{equation*}
 converges absolutely.
Since $u$ is a rapidly decreasing function, it follows $x\mapsto xu(x)$ is $L^1$-integrable, and using the above comment we can   differentiate in  the integral. The function  $h$ is therefore an analytic function as a consequence of the Cauchy-Riemann equations.
\end{proof}

\begin{prop}\label{Paley-Wiener.thm.one.sided} 
 The space $\mathcal{H}^{\pm}$ consists precisely of smooth functions $h$ defined on $\R$ which
\begin{itemize}
\item[(i)] can be represented by an analytic  function $h$ on $\C_{\mp} $ that extends continuously to  $\overline{\C}_\mp$,
\item[(ii)] and such $h$ has the following asymptotic expansion at infinity
\begin{equation}\label{E.Sch.2.1}
 h(\zeta)\sim\sum\limits_{k=0}^{\infty} a_k\zeta^{-k-1}\text{ for } | \zeta| \rightarrow\infty,\  \zeta\in\C_\pm,
\end{equation}
 which can be differentiated formally.
\end{itemize}
Moreover, setting $\mathcal{H}:=\mathcal{H}^+\oplus \mathcal{H}^-\oplus \mathcal{P}$,
where $\mathcal{P}$  is the set of all polynomials, we have
\begin{itemize}
\item[(iii)]
 The space $\mathcal{H}^\pm$ and $\mathcal{H}$ are algebras with respect of the product of complex valued functions.
\item[(iv)]
 $\mathcal{H}$ consists precisely of all functions $h\in C^\infty(\R)$ which have an
 expansion $h\sim \sum\limits_{k=0}^{\infty}c_k\xi^{-k-1}$ for $|\xi|\rightarrow \infty$ in $\R$, which can be
 differentiated formally. 
 \end{itemize}
\end{prop}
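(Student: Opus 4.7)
The plan is to deduce Proposition \ref{Paley-Wiener.thm.one.sided} as the special case of Theorem \ref{Type Paley-Wiener.thm's} corresponding to the appropriate power set $\PP_0=\{(j,0)\mid j\in\N\}$. By Lemma \ref{Lemma.subsets} we have $\mathcal{S}_{\PP_0}(\R_+)=\mathcal{S}(\overline{\R}_+)$, hence $\mathcal{H}^{\pm}=\mathcal{H}^{\pm}_{\PP_0}$. For (i), the analytic representative on $\C_\mp$ and continuity up to the boundary come from Proposition \ref{prop.analytic.extension} specialised to $\PP_0$; the improvement to ordinary continuous extension (as opposed to extension in $\mathcal{S}'$) is Lemma \ref{Paley-WienerProposition}. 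For (ii), since $\PP_0$ has no logarithmic indices, the general expansion in Theorem \ref{Type Paley-Wiener.thm's} reduces to $h(\xi\mp i0)\sim\sum_{k=0}^\infty c_k(\xi\mp i0)^{-k-1}$. Because $k\in\N$, the function $\zeta\mapsto \zeta^{-k-1}$ is honestly analytic on $\C_\mp$ with boundary value $(\xi\mp i0)^{-k-1}$, so the expansion lifts from the boundary to $h(\zeta)\sim\sum_{k=0}^\infty a_k\zeta^{-k-1}$ in $\C_\mp$. The converse direction in (i) and (ii) is symmetric: given such $h$, Theorem \ref{Type Paley-Wiener.thm's} produces $u\in\mathcal{S}_\PP(\R_+)$ for an appropriate $\PP$, but the exponents visible in the asymptotics force $\PP=\PP_0$, hence $u\in\mathcal{S}(\overline{\R}_+)$. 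Formal differentiability of (\ref{E.Sch.2.1}) follows by noting that $xu\in\mathcal{S}(\overline{\R}_+)$ whenever $u\in\mathcal{S}(\overline{\R}_+)$, so $\partial_\zeta h=-i\mathcal{F}[e_\pm(xu)]\in\mathcal{H}^{\pm}$, and uniqueness of asymptotic expansions identifies it with the termwise derivative of (\ref{E.Sch.2.1}).

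For (iii), closure of $\mathcal{H}^{\pm}$ under pointwise products is immediate: the product of two functions analytic on $\C_\mp$ and continuous up to the boundary is again of that type, and multiplying two expansions $\sum a_k\zeta^{-k-1}\cdot\sum b_k\zeta^{-k-1}$ produces a series of the same shape (with leading order $O(\zeta^{-2})$), so the characterisation (i)--(ii) applies. Closure of $\mathcal{H}$ under products then reduces to (iv): functions admitting an asymptotic expansion in integer powers at infinity are evidently stable under multiplication, the polynomial part absorbing any non-negative powers arising in the product.

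For (iv), I read the expansion loosely as allowing a polynomial (non-negative power) part in addition to the displayed decaying series; the statement as literally written is inconsistent with $\mathcal{P}\subset\mathcal{H}$ and should be interpreted in this extended sense. The forward inclusion then follows by adding the expansions supplied by (ii) for the $\mathcal{H}^{\pm}$-components to the (finite) polynomial contribution. For the reverse inclusion, the key technical step is a Hilbert-transform / Cauchy-type splitting: given $h\in C^\infty(\R)$ admitting such an expansion, first subtract its polynomial part $p$ to obtain $\widetilde h\in C^\infty(\R)$ with $\widetilde h(\xi)=O(\xi^{-1})$ at infinity, and then define
\[
h^{\pm}(\zeta):=\frac{1}{2\pi i}\int_{\R}\frac{\widetilde h(\xi)}{\xi-\zeta}\,d\xi,\qquad \zeta\in\C_\mp,
\]
so that $\widetilde h=h^++h^-$ at the boundary by the Plemelj--Sokhotski jump formula, each $h^{\pm}$ inherits the asymptotic expansion in powers $\zeta^{-k-1}$, and the characterisation (i)--(ii) places $h^{\pm}\in\mathcal{H}^{\pm}$.

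The main obstacle I expect is twofold: making precise the slightly abusive formulation of (iv) by explaining the implicit polynomial part, and, more technically, verifying that the Cauchy splitting $h^{\pm}$ above indeed satisfies the asymptotic expansion hypotheses required to invoke the converse direction of (ii) (uniform decay estimates along horizontal lines inside $\C_\mp$, and identification of the formal coefficients $a_k$ with those of $\widetilde h$). This is the classical Paley--Wiener style splitting argument; once carried out, both (iii) and (iv) follow painlessly, with (iii) reducing to the structural observation that the space described in (iv) is manifestly an algebra.
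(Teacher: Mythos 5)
Your strategy is correct and coincides with the paper's where the paper actually gives an argument, but you go further in the places where it does not. For (i) both you and the paper invoke Lemma \ref{Paley-WienerProposition}. For (ii) the paper does not specialise Theorem \ref{Type Paley-Wiener.thm's}: it integrates by parts directly in $\int_0^\infty e^{-ix\zeta}u(x)\,\normd x$ to obtain $h(\zeta)\sim\sum_{j\geq 0} u^{(j)}(0)/(i\zeta)^{j+1}$ with explicit coefficients, and gets the formal differentiability and symbol estimates from $\partial_\xi^k h=(-i)^k\mathcal{F}[e_\pm x^k u]$ together with $|\partial_\xi^k h(\xi)|\leq C\la \xi\ra^{-1-k}$ --- the same $x^k u$ device you use, pushed to all orders; your reduction to the $\PP_0$ case of the general theorem is equivalent but less explicit. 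For (iii) and (iv) the paper offers essentially nothing (it declares (iii) obvious and says (iv) ``follows immediately from Theorem \ref{Type Paley-Wiener.thm's} item (iv)'', a circular reference since that theorem has no such item), so your Plemelj/Cauchy-transform splitting for the reverse inclusion in (iv) is genuinely additional content, and your remark that (iv) as literally written is inconsistent with $\mathcal{P}\subset\mathcal{H}$ is accurate. Two points to watch when carrying the splitting out: since $\widetilde h=O(\xi^{-1})$ only, the moments $\int t^j\widetilde h(t)\,dt$ appearing in the naive geometric expansion of $(t-\zeta)^{-1}$ diverge, so to see that $h^{\pm}$ ``inherits'' the expansion you must first subtract the explicit terms $\chi(\xi)\xi^{-k-1}$ (with $\chi$ an excision function), whose Cauchy transforms are computed by hand, and apply the moment expansion only to the rapidly decreasing remainder; and the Plemelj formula produces $\widetilde h$ as the \emph{jump} of the Cauchy transform across $\R$, so one of your two pieces must carry a minus sign for $\widetilde h=h^++h^-$ to hold. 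You correctly flagged this step as the main obstacle.
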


\begin{proof}
The first part follows as a consequence of Proposition \ref{Paley-WienerProposition}. The second part is proved as  follows, for  $u\in\mathcal{S}(\overline{\R}_+)$, integration by parts we get $\int e^{-ix\zeta}u(x)\,\normd x\sim \sum_{j=0} u^{(j)}(0)/(i\zeta)^{j+1}$, and it can be extended on $\C_-$ setting $\zeta=\xi+i\eta$. For the last part  concerning the symbolic property of $h(\xi\mp i0)$,  let $k\in \N$ and $h=\mathcal{F}[e_\pm   u]$.
Then $x\mapsto x^k u$ also lies in ${\mathcal S}(\overline{\R}_+)$ and we have
\[(-i)^k  \mathcal{F}[e_\pm x^k u](\xi)=  \partial_\xi^k   \mathcal{F}[e_\pm   u](\xi)=   \partial_\xi^k h(\xi) .\]
Hence
\[ \vert  \mathcal{F}[e_\pm x^k u]\vert= |\partial_\xi^k h(\xi )|\leq C_\eta \langle \xi\rangle^{-1-k}\quad\forall \xi\in \R, \]
so that $h$ defines a symbol of order at most $-1$. The item (iii) is obvious. It follows immediately from Theorem \ref{Type Paley-Wiener.thm's} item (iv).
\end{proof}


\subsection{A Paley-Wiener theorem for (tempered) distributions with
positive support}\label{SS:WP-thm.tempred.distributions}

The previous results extend to a similar results for tempered distributions. Before to describe these, let us first give  a Paley-Wiener results concerning to compact supported distribution in $\R$, see Equation \ref{Eq.analytic.Fourier.compact.dist}.

\begin{lemma}\label{Lemma.Fourier.2}{\em (\cite{Rudin}, Theorem 7.2. and 7.23)}
 For any compactly supported distribution $u\in\mathcal{E}'(\R^n)$ its {\em (Laplace-)Fourier transform} (\ref{Eq.analytic.Fourier.compact.dist})  can be extended to an analytical function with $\zeta=\xi+i\eta$ in the complex plane $\C$, cf. Equation (\ref{Eq.analytic.Fourier}). By abuse of notation we denote by $\mathcal{F}[u](\zeta)$ the analytical extension. Moreover, there exits constants $C,M>0$ and an $n_0$ such that, for every $\zeta\in\C^n$, 
\begin{equation}\label{Eq:P-Wv1}
|\mathcal{F}[u](\zeta)|\leq C\la \zeta\ra^{n_0}e^{M|\textup{Im}\, \zeta|}.
\end{equation}
Conversely, any entire function $h$ satisfying (\ref{Eq:P-Wv1}) in $\C$, there exists $u\in \mathcal{E}'(\R)$ such that $\mathcal{F}[u](\zeta)=h(\zeta)$ for every $\zeta\in\C$. 
\end{lemma}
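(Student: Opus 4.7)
This is the classical Paley--Wiener--Schwartz theorem and the strategy splits naturally into a direct and a converse direction.

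For the forward direction, starting from $u\in \mathcal{E}'(\R^n)$ I would observe that $\mathcal{F}[u](\zeta):=\la u, e^{-ix\cdot\zeta}\ra$ makes sense for every $\zeta\in\C^n$ since $u$ has compact support and $x\mapsto e^{-ix\cdot\zeta}$ is smooth. To establish holomorphy I would note that the map $\zeta\mapsto e^{-ix\cdot\zeta}$ is holomorphic as a $C^\infty(\R^n)$-valued function, so that $\partial_{\bar\zeta_j}\mathcal{F}[u](\zeta)=\la u,\partial_{\bar\zeta_j} e^{-ix\cdot\zeta}\ra=0$, which by the Cauchy--Riemann equations yields entirety on $\C^n$. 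The growth bound then follows from the structure theorem for compactly supported distributions: there exist $N\in\N$, a compact set $K\supset \supp u$ contained in $\{|x|\leq M\}$, and $C_0>0$ such that $|\la u,\phi\ra|\leq C_0\sum_{|\alpha|\leq N}\sup_K|\partial^\alpha \phi|$. Applied to $\phi(x)=e^{-ix\cdot\zeta}$, using $\partial_x^\alpha e^{-ix\cdot\zeta}=(-i\zeta)^\alpha e^{-ix\cdot\zeta}$ together with $|e^{-ix\cdot\zeta}|=e^{x\cdot\textup{Im}\,\zeta}\leq e^{M|\textup{Im}\,\zeta|}$ for $x\in K$, this yields $|\mathcal{F}[u](\zeta)|\leq C\la\zeta\ra^N e^{M|\textup{Im}\,\zeta|}$ with $n_0=N$.

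For the converse, given an entire $h$ satisfying the bound, I would define $u:=\mathcal{F}^{-1}[h|_\R]$. On the real axis the bound reduces to $|h(\xi)|\leq C\la\xi\ra^{n_0}$, so $h|_\R\in\mathcal{S}'(\R)$ and $u\in\mathcal{S}'(\R)$; the identity theorem applied to the two entire functions $\mathcal{F}[u]$ and $h$, which coincide on $\R$ by the forward direction already proved, shows that the analytic extension of $\mathcal{F}[u]$ is indeed $h$. The substantive content is to prove $\supp u\subset[-M,M]$, and for this the natural tool is contour deformation: for test functions $\phi$ supported in $\{x>M\}$ one shifts the inversion contour from $\R$ to $\R-i\tau$ with $\tau>0$; entirety of $h$ justifies the shift, and the estimate
\[
|h(\xi-i\tau)\,e^{ix(\xi-i\tau)}|\leq C\la\xi-i\tau\ra^{n_0} e^{M\tau}e^{-x\tau}=C\la\xi-i\tau\ra^{n_0} e^{-(x-M)\tau}
\]
forces the pairing to vanish as $\tau\to\infty$. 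Symmetrically, for $\phi$ supported in $\{x<-M\}$ one shifts to $\R+i\tau$ with $\tau>0$.

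The main obstacle is to justify the contour deformation at the level of distributions, where the inversion integrals need not converge absolutely. A clean workaround is a regularization: pick a mollifier $\psi_\epsilon\in C_c^\infty(\R)$ with $\supp\psi_\epsilon\subset\{|x|\leq\epsilon\}$, and work with $u_\epsilon:=\psi_\epsilon\ast u$, a smooth function whose Fourier transform is $h\cdot\widehat{\psi_\epsilon}$, decreasing faster than any polynomial on $\R$. For $u_\epsilon$ the inversion formula converges absolutely, the contour shift is literally valid, and it yields $\supp u_\epsilon\subset\{|x|\leq M+\epsilon\}$. Passing to the limit $\epsilon\to 0^+$ in $\mathcal{S}'(\R)$, where $u_\epsilon\to u$, concludes that $\supp u\subset\{|x|\leq M\}$, so $u\in \mathcal{E}'(\R)$ as required.
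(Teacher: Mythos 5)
The paper does not prove this lemma at all; it simply cites Rudin (Theorems 7.2 and 7.23), so there is no internal proof to compare against. Your outline is the standard Paley--Wiener--Schwartz argument, and the forward direction is correct and complete as sketched: the pairing $\la u, e^{-ix\cdot\zeta}\ra$ is well defined, holomorphy follows by differentiating under the pairing, and the continuity estimate $|\la u,\phi\ra|\leq C_0\sum_{|\alpha|\leq N}\sup_K|\partial^\alpha\phi|$ applied to $\phi=e^{-ix\cdot\zeta}$ gives exactly (\ref{Eq:P-Wv1}) with $n_0=N$. The mollification device in the converse (replacing $u$ by $\psi_\epsilon\ast u$ so that the inversion integral converges absolutely, shifting the contour, then letting $\epsilon\to 0^+$ in $\mathcal{S}'$) is also the right way to make the contour deformation rigorous, and the passage to the limit of the support condition is sound.

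There is, however, a sign error that makes the key displayed estimate false and the contour shift go in the wrong direction. With the paper's convention $\mathcal{F}^{-1}[v](x)=\int e^{ix\xi}v(\xi)\,\normd\xi$, one has $|e^{ix(\xi-i\tau)}|=e^{+x\tau}$, not $e^{-x\tau}$ as you wrote; so on the contour $\R-i\tau$ the integrand for $x>M$ is bounded by $C\la\xi-i\tau\ra^{n_0}e^{(M+x)\tau}$, which blows up as $\tau\to\infty$ instead of vanishing. To kill the pairing against $\phi$ supported in $\{x>M\}$ you must shift \emph{upward} to $\R+i\tau$, where $|e^{ix(\xi+i\tau)}|=e^{-x\tau}$ and the bound $|h(\xi+i\tau)|\leq C\la\xi+i\tau\ra^{n_0}e^{M\tau}$ yields the decisive factor $e^{-(x-M)\tau}$; symmetrically, for $\phi$ supported in $\{x<-M\}$ you shift downward. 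With the two cases swapped the argument goes through verbatim, so this is a local, fixable slip rather than a structural flaw --- but as written the step fails, and in a Paley--Wiener argument the direction of the contour shift relative to the support is precisely the point, so it should be stated correctly.
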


Let us set
\[
 \mathcal{H}^{\prime, \pm}=\{  \mathcal{F}[U](\xi)\ |\ U\in \mathcal{S}^\prime(\overline{\R}_\pm)\},
\]
 where $\mathcal{S}^\prime(\overline{\R}_\pm)$ stands for the set of tempered distributions with positive/negative support. 
 Set
 $$\mathcal{H}'=\mathcal{H}^{\prime,+}\oplus \mathcal{H}^{\prime,-}.$$

Before we state the following Theorem, let us recall a well-known fact \cite{RS},  which can give the flavor of the following result: A function $H\in L^2(\R_\xi)$ is a Fourier transform  of a function $u$ in $L^2(\R_+)$  if and only if $H$ can be represented by an analytic function $h(\zeta)$ on $\C_{\mp}$ that extends continuously to $\overline{\C}_\mp$ by $h(\xi+i0)$ in $L^2$, i.e. $$ h(\xi\pm i\eta)\longrightarrow H_\xi\text{ in } L^2(\R),\text{ as } \eta\rightarrow 0^+.$$
 Moreover, by imposing the condition that $h$ satisfies the estimative $|h(\zeta)|\leq Ce^{a|\zeta|}$ for some $C,a>0$, we have $\textup{supp}\,  u\subset [-a,a]$. For tempered distributions in $\mathcal{H}^{\prime,\pm}$ we find:

\begin{prop}\label{WP-thm.tempred.distributions}{\em (\cite{SZ}, Theorem 1.)}
A tempered distribution $H$ on $\R$  lies in $  \mathcal{H}^{\prime,\pm}$ if and only if

\begin{itemize}
 \item[(i)] it can be represented by an analytic function $h$ on $\C_{\mp}$,  that extends continuously to  $\overline{\C}_\mp $ by\footnote{ $h(\xi\mp i 0)$ denotes the limit $h(\xi+i\eta)\longrightarrow H_\xi \text{ in } \mathcal{S}^\prime(\R),\text{ as } \eta\rightarrow 0^\mp.$ } $h(\xi\mp i0)$ in $\mathcal{S}^\prime(\R)$,

 \item[(ii)] and $h$ has the following growth at infinity $\forall \zeta=\xi+i\eta\in \C_{\mp},\exists m,n\in \N, \exists\, C_n>0,$ such that
  \begin{equation}\label{exponential.type.growth1}
    |h(\zeta)|\leq C_n\la \zeta\ra^m M_n(\eta),
  \end{equation}
  where $M_n(\eta)$ is the map equal to one for $\vert \eta\vert>1$ and $\vert \eta\vert^{-n}$ for $\vert \eta\vert\leq 1$.
\end{itemize}
In this case, the analytic representation $h$ of $H$ is given by $h=\mathcal{F}[U]$ the Laplace-Fourier transform.
\end{prop}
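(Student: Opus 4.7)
The plan is to prove the equivalence in two steps, handling the ``$+$'' case in detail; the ``$-$'' case is symmetric under $x\leftrightarrow -x$ and $\zeta\leftrightarrow -\overline\zeta$.

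For the forward direction, take $U\in \mathcal{S}'(\overline{\R}_+)$ and fix a smooth cutoff $\theta$ with $\theta\equiv 1$ on $[-1/2,\infty)$ and $\theta\equiv 0$ on $(-\infty,-1]$. For $\zeta=\xi+i\eta\in\C_-$ the map $x\mapsto \theta(x)\,e^{-ix\zeta}$ lies in $\mathcal{S}(\R)$, because on its support one has the bound $e^{x\eta}$ with $\eta<0$, so
\[
h(\zeta):=\la U,\theta(x)\,e^{-ix\zeta}\ra
\]
is well defined. Independence of $\theta$ is immediate from $\supp U\subset \overline{\R}_+$, and analyticity on $\C_-$ is obtained by differentiating inside the dual pairing, justified by the Schwartz-seminorm continuity of $\zeta\mapsto \theta\,e^{-i\cdot\zeta}$. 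For the growth bound, invoke the fact that $U$ has finite order $N$, so $|\la U,\varphi\ra|\leq C\sum_{|\alpha|,|\beta|\leq N}\sup_x|x^\alpha\partial^\beta\varphi(x)|$. Applied to $\varphi=\theta\,e^{-i\cdot\zeta}$, the $\partial^\beta$'s produce factors $\zeta^\beta$, while $\sup_{x\geq -1}|x|^\alpha e^{x\eta}\leq C_\alpha|\eta|^{-\alpha}$ for $|\eta|\leq 1$ and is uniformly bounded for $|\eta|>1$; this reproduces the required $|h(\zeta)|\leq C_n\la\zeta\ra^m M_n(\eta)$. Finally, for the boundary behavior in $\mathcal{S}'(\R)$, test against $\psi\in\mathcal{S}(\R)$ and use Fubini to get
\[
\int_\R h(\xi+i\eta)\,\psi(\xi)\,d\xi=\sqrt{2\pi}\,\la U,\,\theta(x)\,e^{x\eta}\mathcal{F}[\psi](x)\ra,
\]
which tends to $\sqrt{2\pi}\,\la U,\mathcal{F}[\psi]\ra=\la H,\psi\ra$ as $\eta\to 0^-$, since $(\theta-1)\mathcal{F}[\psi]$ is supported in $(-\infty,-1/2)$ and therefore annihilated by $U$.

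For the converse, given such $h$, set $H:=h(\cdot-i0)\in\mathcal{S}'(\R)$ and $U:=\mathcal{F}^{-1}[H]$. It suffices to show $\la U,\varphi\ra=0$ for every $\varphi\in C_c^\infty(\R_-)$. Let $\Phi(\xi):=\mathcal{F}^{-1}[\varphi](\xi)$; since $\varphi$ is compactly supported in $\R_-$, $\Phi$ extends holomorphically to $\C_-$ with arbitrary polynomial decay in $|\textup{Re}\,\zeta|$ uniformly on $\{-R\leq\textup{Im}\,\zeta\leq 0\}$ (by repeated integration by parts in the defining integral), together with exponential decay in $\eta$ as $\textup{Im}\,\zeta\to-\infty$. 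The crucial step is the contour shift
\[
\int_\R h(\xi+i\eta)\,\Phi(\xi+i\eta)\,d\xi=\int_\R h(\xi+i\eta')\,\Phi(\xi+i\eta')\,d\xi,\qquad \eta,\eta'<0,
\]
obtained from Cauchy's theorem on a tall thin rectangle: the lateral contributions vanish because the polynomial growth of $h$ is annihilated by the rapid decay of $\Phi$. Letting $\eta\to 0^-$ yields $\la H,\Phi\ra=\sqrt{2\pi}\,\la U,\varphi\ra$, while letting $\eta'\to-\infty$ exploits the exponential decay of $\Phi$ against the merely polynomial growth of $h$ to force the right-hand side to $0$. Hence $\la U,\varphi\ra=0$ and $\supp U\subset\overline{\R}_+$.

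The main obstacle is the contour-shift argument in the converse: the factor $M_n(\eta)$ blows up as $|\eta|^{-n}$ when $\eta\to 0^-$, so the estimate on $h$ is \emph{not} uniform up to the real axis. One has to choose the order of integration-by-parts producing $\Phi$ large enough to absorb simultaneously the polynomial weight $\la\zeta\ra^m$ from $h$ and the singular weight $M_n(\eta)$ near the boundary, while retaining enough decay in $|\textup{Re}\,\zeta|$ to discard the lateral edges and enough decay in $\textup{Im}\,\zeta$ to kill the bottom edge. Once this bookkeeping is carried out, the rest reduces to the standard Paley--Wiener circle of ideas already illustrated in the smooth and $L^2$ cases (Propositions \ref{Prop.compac-mellin} and \ref{Prop.L2}) together with Lemma \ref{Lemma.Fourier.2}.
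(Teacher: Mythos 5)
The paper itself offers no proof of this proposition --- it is imported verbatim from \cite{SZ}, Theorem 1 --- so your attempt can only be judged on its own merits. The overall architecture (Laplace--Fourier transform with a cutoff for the forward direction; contour shift against $\Phi=\mathcal{F}^{-1}[\varphi]$, $\varphi\in C^\infty_c(\R_-)$, for the converse) is the standard and correct one, and your converse direction, including the explicit warning about the non-uniformity of $M_n(\eta)$ near the real axis, is essentially sound modulo routine bookkeeping (e.g.\ replacing $\Phi(\cdot+i\eta)$ by $\Phi(\cdot)$ in the limit $\eta\to 0^-$, which follows from $\Phi(\cdot+i\eta)\to\Phi$ in $\mathcal{S}(\R)$ together with the equicontinuity of the family $h(\cdot+i\eta)$).

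There is, however, a genuine gap in the forward direction, precisely where you derive the growth bound (ii). With a \emph{fixed} cutoff $\theta$ supported in $[-1,\infty)$, the seminorm estimate for $U$ forces you to control $\sup_{x\ge -1}|x|^\alpha e^{x\eta}$, and your claim that this is ``uniformly bounded for $|\eta|>1$'' is false: for $x\in[-1,0)$ and $\eta<0$ one has $e^{x\eta}=e^{|x|\,|\eta|}$, so already at $x=-1/2$ (where $\theta\equiv 1$) the supremum is at least $2^{-\alpha}e^{|\eta|/2}$. Your argument therefore only yields $|h(\zeta)|\le C\la\zeta\ra^{m}e^{|\eta|}$ in the regime $|\eta|>1$, which is not the estimate (\ref{exponential.type.growth1}). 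The fact that $h(\zeta)$ itself does not depend on $\theta$ does not rescue the bound, because the seminorm estimate sees the whole test function, not just its restriction near $\supp U$. The standard repair is to let the cutoff depend on $\eta$: take $\theta_\eta(x)=\theta(|\eta|x)$, so that $\supp\theta_\eta\subset[-1/|\eta|,\infty)$, whence $e^{x\eta}\le e$ on $\supp\theta_\eta$, while each derivative falling on $\theta_\eta$ contributes only a factor $|\eta|^{|\beta|}\le\la\zeta\ra^{|\beta|}$; this simultaneously produces the polynomial bound for $|\eta|>1$ and the $|\eta|^{-n}$ blow-up permitted by $M_n(\eta)$ for $|\eta|\le 1$. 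Without this (or an equivalent) device, the key estimate of the theorem is not established.
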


From Proposition \ref{WP-thm.tempred.distributions} one can equip a product to the space  $\mathcal{H}^{\prime,+}$ with an algebraic structure, see \cite{SZ}.

\begin{defin}
 Let $U$ and $V$ be distributions in $\mathcal{S}^\prime(\R)$ which can be extended to analytic functions $u$ and $v$  defined on
 $\C_\mp$ (cf. Proposition \ref{WP-thm.tempred.distributions}). We set
\begin{equation*}
 (UV)(\xi):= \lim_{\eta\rightarrow 0^\mp} u(\xi\pm   i\eta)v(\xi\pm i\eta)\text{ in } \mathcal{S}^\prime(\R).
\end{equation*}
\end{defin}

\begin{example}
 For $m\in\N$ we denote by $H^m(x)$ the function $x^m$, if $x\geq  0$ and $0$, if $x<0$. Its Fourier
 transform
\begin{equation*}
 \mathcal{F}[H^m](\xi)=m!(-i)^{m+1}\xi^{-m-1}-i^m\pi \delta^{(m)},
\end{equation*}
extends analytically to $h^m(\zeta)=(-i)^{k+1}k!\zeta^{-k-1}$   exists by Proposition \ref{WP-thm.tempred.distributions}.
 For $m,n\in\N$ we have
\begin{eqnarray*}
 (\mathcal{F}[H^m]\mathcal{F}[H^n])(\xi)&=&\lim_{\eta\rightarrow 0^-}h^m(\xi+i\eta)h^n(\xi+i\eta)\\
                                        &=& m!n!(-i)^{m+n+1}( -i\xi^{-m-n-2}-\pi \delta^{(m+n)}).
\end{eqnarray*}
\end{example}

Finally, from Proposition \ref{Thm.3.2.1.HI} and Proposition \ref{WP-thm.tempred.distributions}  we obtain a characterization of elements in $\mathcal{H}'$.

\begin{cor}
 A tempered distribution $H$ on $\R$ lies in $\mathcal{H}'$ if and only if
 it can be represented by an analytic function $h$ on $\C_+\cup\C_-$ and  it defines a distribution in $\mathcal{S}^\prime(\C)$ given by
\begin{equation*}
 H(\varphi)=\int\int h(\xi+i\eta)\varphi(\xi,\eta)d\xi d\eta,\ \varphi\in \mathcal{S}(\R^2),
\end{equation*}
such that
\begin{equation*}
 \la\  \frac{\partial H}{\partial \bar{\zeta}} ,\varphi\   \ra=\frac{i}{2} \la h(\cdot+i0)-h(\cdot-i0),\varphi(\cdot,0)\ra,
 \ \varphi\in \mathcal{S}(\R^2),
\end{equation*}
 and we have $H=h$ in $\mathcal{S}(\C\setminus\R)$.
\end{cor}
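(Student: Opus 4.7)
The plan is to reduce the statement to Proposition \ref{WP-thm.tempred.distributions} applied separately on each half-plane, and then to compute the $\bar{\partial}$-action explicitly by a careful integration by parts that picks up the jump across $\R$.

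\textbf{Forward direction.} Suppose $H\in\mathcal{H}'$. By definition of $\mathcal{H}'=\mathcal{H}^{\prime,+}\oplus \mathcal{H}^{\prime,-}$, write $H=H_+ + H_-$ with $H_\pm\in\mathcal{H}^{\prime,\pm}$. By Proposition \ref{WP-thm.tempred.distributions}, $H_+$ is represented by an analytic function $h_+$ on $\C_-$ admitting the bound (\ref{exponential.type.growth1}) and converging to $H_+$ in $\mathcal{S}'(\R)$ as $\eta\to 0^-$; similarly $H_-$ is represented by $h_-$ analytic on $\C_+$. Define the function $h$ on $\C\setminus\R$ by $h|_{\C_-}:=h_+$ and $h|_{\C_+}:=h_-$. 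The growth estimate (\ref{exponential.type.growth1}) on each half-plane shows that $\zeta\mapsto h(\zeta)$ is locally integrable on $\R^2$ with polynomial growth away from $\R$, so the pairing $H(\varphi)=\int\!\int h(\xi+i\eta)\varphi(\xi,\eta)\, d\xi d\eta$ is well defined for $\varphi\in\mathcal{S}(\R^2)$, and agrees with $h$ as a function away from $\R$.

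\textbf{The $\bar{\partial}$-formula.} Splitting the integration domain into $\{\eta>0\}$ and $\{\eta<0\}$ and writing $\partial/\partial\bar\zeta=\tfrac{1}{2}(\partial_\xi+i\partial_\eta)$, I compute
\begin{equation*}
\Big\la \frac{\partial H}{\partial\bar\zeta},\varphi\Big\ra
=-\int\!\!\int h(\xi+i\eta)\, \tfrac{1}{2}\bigl(\partial_\xi\varphi+i\partial_\eta\varphi\bigr)\, d\xi d\eta.
\end{equation*}
On each half-plane I integrate by parts in $\xi$ and in $\eta$; the interior terms regroup into $\int\!\int (\partial h/\partial\bar\zeta)\varphi=0$ because $h$ is holomorphic there, and the only boundary contributions come from $\eta=0^\pm$. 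The growth bounds of Proposition \ref{WP-thm.tempred.distributions} ensure that the limits $h(\xi\mp i0)$ exist in $\mathcal{S}'(\R)$ and justify the boundary integrals. A careful accounting of signs yields
\begin{equation*}
\Big\la \frac{\partial H}{\partial\bar\zeta},\varphi\Big\ra=\frac{i}{2}\bigl\la h(\cdot+i0)-h(\cdot-i0),\varphi(\cdot,0)\bigr\ra,
\end{equation*}
which is the claimed identity; the equality $H=h$ in $\mathcal{S}(\C\setminus\R)$ is immediate from the construction.

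\textbf{Converse direction.} Conversely, suppose $h$ is analytic on $\C\setminus\R$ and the integral pairing defines an element of $\mathcal{S}'(\C)$. The polynomial/tempered growth implied by the continuity of the pairing together with Hormander's boundary-value theorem (Proposition \ref{Thm.3.2.1.HI}) shows that the restrictions $h_-:=h|_{\C_+}$ and $h_+:=h|_{\C_-}$ admit boundary values in $\mathcal{S}'(\R)$ and satisfy estimates of the form (\ref{exponential.type.growth1}) on each half-plane. Applying Proposition \ref{WP-thm.tempred.distributions} in each half-plane produces $H_+\in\mathcal{H}^{\prime,+}$ with $\mathcal{F}[H_+]$ represented by $h_+$, and $H_-\in\mathcal{H}^{\prime,-}$ with $\mathcal{F}[H_-]$ represented by $h_-$. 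Then $H:=H_++H_-$ lies in $\mathcal{H}'$ and the $\bar\partial$-computation above shows that the analytic function associated to $H$ is precisely the given $h$.

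\textbf{Main obstacle.} The technical difficulty is not the algebra but the justification of the boundary limits and the interchange of limits with integration by parts in the $\bar\partial$-computation: the growth bound (\ref{exponential.type.growth1}) is only polynomial in $\la\zeta\ra$ but can blow up like $|\eta|^{-n}$ near the real axis, so one must carefully use the fact that the pairings are taken against Schwartz test functions $\varphi\in\mathcal{S}(\R^2)$ and that Proposition \ref{Thm.3.2.1.HI} provides genuine distributional boundary values $h(\cdot\pm i0)\in\mathcal{S}'(\R)$. Everything else is a direct assembly of Proposition \ref{WP-thm.tempred.distributions} and Proposition \ref{Thm.3.2.1.HI}.
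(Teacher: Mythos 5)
Your overall route --- splitting $H$ into $\mathcal{H}^{\prime,+}\oplus\mathcal{H}^{\prime,-}$, invoking Proposition \ref{WP-thm.tempred.distributions} on each half-plane, and obtaining the jump formula by integration by parts --- is exactly the assembly the paper has in mind (it offers no argument beyond citing Propositions \ref{Thm.3.2.1.HI} and \ref{WP-thm.tempred.distributions}). However, there is a genuine gap in your forward direction. You claim that the growth estimate (\ref{exponential.type.growth1}) makes $h$ locally integrable on $\R^2$, but $M_n(\eta)=|\eta|^{-n}$ for $|\eta|\le 1$ is not integrable across $\eta=0$ as soon as $n\ge 1$; for the paper's own example $h^m(\zeta)=(-i)^{m+1}m!\,\zeta^{-m-1}$ with $m\ge 1$ the double integral $\int\!\!\int h\varphi\,d\xi\,d\eta$ diverges absolutely near the origin. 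So the pairing $H(\varphi)=\int\!\!\int h\varphi$ is not defined by your argument, and consequently the integration by parts in your $\bar\partial$-computation --- whose boundary terms sit at $\eta=\pm\epsilon$ and must be controlled as $\epsilon\to 0$ --- is not justified either. To repair this you must either define the pairing as $\lim_{\epsilon\to0}\int_{|\eta|>\epsilon}h\varphi\,d\xi\,d\eta$ and prove the limit exists (which is where Proposition \ref{Thm.3.2.1.HI} actually enters: the inner integrals $\int h(\xi\pm i\epsilon)\varphi(\xi,\pm\epsilon)\,d\xi$ converge because the boundary values exist in $\mathcal{D}^{\prime,N+1}$), or replace $h$ by a $\partial_\zeta$-primitive of sufficiently high order that \emph{is} locally integrable. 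As written, your ``main obstacle'' paragraph concedes the $|\eta|^{-n}$ blow-up that your earlier local-integrability claim denies.

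A second, smaller gap is in the converse. You assert that continuity of the pairing ``together with H\"ormander's boundary-value theorem'' yields the pointwise estimates (\ref{exponential.type.growth1}), but Proposition \ref{Thm.3.2.1.HI} goes in the opposite direction, from pointwise bounds to boundary values. To extract a pointwise bound $|h(\zeta_0)|\le C\la\zeta_0\ra^{m}|\eta_0|^{-n}$ from the hypothesis that $\varphi\mapsto\int\!\!\int h\varphi$ is tempered, you need the mean-value property of holomorphic functions: average $h$ over the disk of radius $|\eta_0|/2$ centred at $\zeta_0$ and test the distribution against a normalized bump supported there, then read off the growth from the seminorm estimate. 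Without that step the verification of (\ref{exponential.type.growth1}), and hence the applicability of Proposition \ref{WP-thm.tempred.distributions} to conclude $h_\pm=\mathcal{F}[U_\pm]$ for some $U_\pm\in\mathcal{S}^\prime(\overline{\R}_\pm)$, is unsupported.
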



\begin{thebibliography}{99}



\bibitem[BdM]{BdM} Boutet de Monvel, L. \textit{Boundary problems for pseudo-differential operators}. Acta Math. \textbf{126}, Number 1, pp. 11--51, 1971.


\bibitem[CdC]{CdC} Del Corral, C. \textit{Canonical Trace and Pseudo-differential Operators on Manifolds with Boudary}.  Ph.D. Thesis, Universidad de los Andes, 2016.
\bibitem[Esk]{Eskin} \'{E}skin, G. {\em Boundary values problems for Elliptic pseudodifferential Equations}.  Translations of Mathematical Monographs. American Mathematical Society. Vol. 52, 1980. Translated from the Russian by S. Smith.


\bibitem[FGLS]{FGLS}  Fedosov, B.V., Golse, F., Leichtnam, E. and  Schrohe, E. \textit{The Noncommutative Residue for Manifolds with Boundary}. Journal of Functional Analysis, \textbf{142}, pp. 1--31, 1996.

\bibitem[FGD]{FGD}  Flajolet, Ph.,  Gourdon, X. and Dumas, Ph. {\it Mellin transforms and asymptotics: Harmonic sums}. 
Theoretical Computer Science \textbf{144}, pp. 3--58, 1995.

\bibitem[FG]{FG} Franssens, G.R. {\em One-dimensional associated homogeneous distributions}.  Mathematical Methods in the Applied Sciences; Published in Wiley Online Library, 2012.





\bibitem[G05]{G05-2}  Grubb, G. \textit{On the logarithm component in trace defect formulas}.  Communications in Partial Differential Equations, \textbf{30},  pp. 1671--1716, 2005.














\bibitem[H.I]{H} H\"ormander, L. {\em The analysis of linear partial differential operators I}.  Springer-Verlag, Berlin, Heidelberg, 1983.

\bibitem[KV]{K-V}  Kontsevich, M. and Vishik, S. {\it  Geometry of determinants of elliptic operators}.   Functional Analysis on the Eve of the 21’st Century   (Rutgers Conference in honor of I. M. Gelfand 1993). Vol. I, edited by S. Gindikin et al. Progr. Math. \textbf{131}, Birkhauser, Boston,  pp. 173--197, 1995.

\bibitem[Lesch99]{Lesch99}   Lesch, M. \textit{On the noncommutative residue for pseudodifferential operators with log-polyhomogeneous symbols}. Annals of Global Analysis and Geometry \textbf{17}, no. 2, pp. 151-187, 1999.





\bibitem[PW]{PW} Paley, R.; Wiener, N. {\it Fourier Transforms in the Complex Domain}. Reprint of the 1934 original, American Mathematical Society Colloquium Publications, Ame. Math. Soc. Providence, RI, 1987.


\bibitem[P79]{P79}  Pierre, J. {\it Transformation de Mellin et D\'eveloppements Asymptotiques}.     L'Enseignement Math\'ematique \textbf{25}, pp. 285--308, 1979.


\bibitem[RS81]{RS81}  Rempel, S. and  Schulze, B-W. \textit{Parametrices and boundary symbolic Calculus for elliptic boundary problems without the transmission property}.     Math. Nachr. \textbf{105}, pp.45-149, 1981.

\bibitem[RS]{RS} Rempel, S. and Schulze, B.-W. {\it Index theory of elliptic boundary value problems}. Springer Verlag, 1982.

\bibitem[RS82]{RS82}  Rempel, S. and  Schulze.B.-W. \textit{Complex powers for pseudo-differential problems, I}.  Math. Nachr. \textbf{111}, pp.41-109, 1982.


\bibitem[Rud]{Rudin}  Rudin, W. \textit{Functinal Analysis}.  Tata McGranW-Hill, New delhi, 1974. 










\bibitem[Schw52]{Schwartz52}  Schwartz, L. {\it Transformation de Laplace des distributions}. Comm. S\'em. Math. Univ. Lund , pp. 196--206, 1952.

\bibitem[Schw66]{Schwartz68}  Schwartz, L. {\it  Theorie des distributions}. Hermann, Paris, 1966.

\bibitem[SB]{SB}  Simon, B. {\it  Basic Complex Analysis, A Comprehensive Course in Analysis, Part 2A}. American Mathemathical Society, 2015.

\bibitem[SZ]{SZ} Shambayati, R. and Zielezny, Z. {\it   On Fourier transforms of distributions with one-sided bounded support}. Proc. Amer. Math. Soc.,  Volume 88, Number 2, pp. 237--243, 1983.

\bibitem[Se]{Seeley1973}  Seeley, R.T. {\it   Extension of $C^{\infty }$ functions defined in a half space}.  Journal: Proc. Amer. Math. Soc. \textbf{15}, pp. 625--626. MSC: Primary 46.38, 1964.







\bibitem[vG]{GO}  Von Grudzinski, O. {\em Quasihomogeneous Distributions}.  North-Holland Mathematics studies (Continuation of the Notas de Matematica); \textbf{165}, 2004.




\end{thebibliography}
\end{document}